\newcommand*\patchAmsMathEnvironmentForLineno[1]{%
  \expandafter\let\csname old#1\expandafter\endcsname\csname #1\endcsname
  \expandafter\let\csname oldend#1\expandafter\endcsname\csname end#1\endcsname
  \renewenvironment{#1}%
     {\linenomath\csname old#1\endcsname}%
     {\csname oldend#1\endcsname\endlinenomath}}% 
\newcommand*\patchBothAmsMathEnvironmentsForLineno[1]{%
  \patchAmsMathEnvironmentForLineno{#1}%
  \patchAmsMathEnvironmentForLineno{#1*}}%
\definecolor{modra3}{rgb}{.1,.0,.4}
\newtheorem{theorem}{Theorem}
\newtheorem{lemma}[theorem]{Lemma}
\newtheorem{corollary}[theorem]{Corollary}
\newtheorem{observation}[theorem]{Observation}
\newtheorem{problem}{Problem}
\newtheorem*{definition}{Definition}
\theoremstyle{remark}
\newtheorem*{remark}{Remark}
\long\def\onefigure#1#2#3#4{%  #1 picture, #2 csale, #3 caption, #4 label
\begin{figure}%[ht]
\begin{center}
\scalebox{#2}
{
\ifpdf\includegraphics{#1}\fi
}
\end{center}
\caption{\label{#4} #3}
\end{figure}
} %end onefigure def
\newcommand{\myfig}[3]  % my figure
{\onefigure{{p-#1.pdf}} {#2} {#3} {fig:#1}}
\newcommand{\Rbb}{\mathbb R}
\DeclareMathOperator{\conv}{conv}
\DeclareMathOperator{\relheight}{rh}
\def\inst#1{$^{#1}$}
\begin{document}

\title{On three measures of non-convexity\thanks{This research was supported by
the project CE-ITI (GA\v{C}R P202/12/G061) of the Czech Science Foundation.
The first, the third and the sixth author were partially supported by the Charles University Grant GAUK 52410. The first, the third and the fifth author were partially supported by the grant SVV-2013-267313 (Discrete Models and Algorithms).
The second author was supported by the project CZ.1.07/2.3.00/20.0003 of the Operational Programme Education
for Competitiveness of the Ministry of Education, Youth and Sports of the Czech Republic.
The third author was also partially supported by ERC Advanced Research Grant no 267165 (DISCONV).
The fourth author was partially supported by ESF EuroGiga
project ComPoSe (IP03), by OTKA Grant K76099 and by OTKA Grant 102029.
%OTKA Grant K76099 and by the grant no. MSM0021620838 of the Ministry of Education of the Czech Republic.
Part of the research was conducted during the Special Semester on Discrete
and Computational Geometry at \'Ecole Polytechnique F\'ederale de
Lausanne, organized and supported by the CIB (Centre  Interfacultaire
Bernoulli) and the SNSF (Swiss National Science Foundation).
} %end of thanks
} %end of title

%%\title{On Three Parameters of Invisibility Graphs

\author{Josef Cibulka\inst{1} \and
Miroslav Korbel\'{a}\v{r}\inst{3} \and
Jan Kyn\v{c}l\inst{1}$^{, }$\inst{2} \and
Viola M\'{e}sz\'{a}ros\inst{4} \and
Rudolf Stola\v{r}\inst{1} \and
Pavel Valtr\inst{1}
} %end of author

\date{}

\maketitle

\begin{center}
{\footnotesize
\inst{1}
Department of Applied Mathematics and Institute for Theoretical Computer Science, \\
Charles University, Faculty of Mathematics and Physics, \\
Malostransk\'e n\'am.~25, 118~00~ Prague, Czech Republic; \\
\texttt{\{cibulka,kyncl,ruda\}@kam.mff.cuni.cz}
\\\ \\
\inst{2}
%Department of Applied Mathematics and Institute for Theoretical Computer Science, \\
%Charles University, Faculty of Mathematics and Physics, \\
%Malostransk\'e n\'am.~25, 118~00~ Prague, Czech Republic; and \\
Alfr\'ed R\'enyi Institute of Mathematics, Re\'altanoda u. 13-15, Budapest 1053, Hungary %; \\
%\texttt{kyncl@kam.mff.cuni.cz}
\\\ \\
\inst{3}
Department of Mathematics and Statistics, \\
 Faculty of Science, Masaryk University,\\
Kotl\' a\v rsk\'{a} 2, 611 37 Brno, Czech Republic; \\
\texttt{miroslav.korbelar@gmail.com}
\\\ \\
\inst{4}
University of Szeged, Bolyai Institute, \\ 
6720 Szeged, Aradi v{\'e}rtan{\'u}k tere 1, Hungary; \\
\texttt{viola@math.u-szeged.hu}
%Institute for Mathematics, Technical University of Berlin, \\
%Strasse des 17. Juni 136, 10623 Berlin, Germany; \\
%\texttt{meszaros@math.tu-berlin.de}
}
\end{center}  %end of institute

%\newpage

\begin{abstract}
The invisibility graph $I(X)$ of a set $X \subseteq \Rbb^d$ is a (possibly infinite) graph whose vertices are
the points of $X$ and two vertices are connected by an edge if and only if the straight-line segment connecting
the two corresponding points is not fully contained in $X$.
We consider the following three parameters of a set $X$: the clique number $\omega(I(X))$, the chromatic number
$\chi(I(X))$ and the convexity number $\gamma(X)$, which is the minimum number of convex subsets of $X$ that cover $X$.

We settle a conjecture of Matou\v{s}ek and Valtr claiming that for every planar set $X$,
$\gamma(X)$ can be bounded in terms of $\chi(I(X))$.
As a part of the proof we show that a disc with $n$ one-point holes near its boundary has
$\chi(I(X)) \ge \log\log(n)$ but $\omega(I(X))=3$.

We also find sets $X$ in $\Rbb^5$ with $\chi(X)=2$, but $\gamma(X)$ arbitrarily large.

\end{abstract}

\section{Introduction}

The \emph{convexity number} $\gamma(X)$ of a set $X \subseteq \Rbb^d$
is the minimum possible number of convex subsets of $X$ that cover $X$. Intuitively, the larger the $\gamma(X)$, the more ``non-convex'' $X$ is. Two other parameters of non-convexity have been widely studied in the literature. We define them in terms of the invisibility graph of $X$. 

%Let $X$ be a subset of a $d$-dimensional Euclidean space. 
We say that two points $x,y \in X$ \emph{see each other (within $X$)}
if the straight-line segment $\overline{xy}$ connecting $x$ and $y$ is a subset of $X$.
The \emph{invisibility graph\/} $I(X)$ of $X$ is a graph whose vertices are
the points of $X$ and two vertices are connected by an edge if and only if they do not see each other. Clearly, a set $A\subseteq X$ is a clique in $I(X)$ if no two points from $A$ see each other within $X$. Analogously, $A$ is an independent set in $I(X)$ if every two points from $A$ see each other within $X$. An independent set of $I(X)$ is also called a \emph{seeing subset} of $X$.

The chromatic number of $X$, denoted by $\chi(X)$, is defined as the chromatic number of the invisibility graph $I(X)$. In other words, $\chi(X)$ is the minimum possible number of seeing subsets of $X$ that cover $X$. Similarly, the clique number of $X$, denoted by $\omega(X)$, is defined as the supremum of the cardinalities of cliques in $I(X)$. Note that if the clique number is finite, then a clique of maximum size always exists. 
Sets $X$ with $\omega(X)=n-1$ are also called \emph{$n$-convex}. 
%skoro vsichni tomu tak rikaji ~\cite{Valentine57}.
The parameters $\omega(X)$ and $\chi(X)$ have been also denoted by $\alpha(X)$ and $\beta(X)$, respectively~\cite{KPS90,NiPe13,PerlesShelah90}.
%PS, KPS, NiPe

Clearly, $\omega(X) \le \chi(X) \le \gamma(X)$ for any set $X$. 
We mostly consider sets with all the three parameters finite.
We are interested in the following questions. 
%How well can $\gamma(X)$ be approximated in terms of $\chi(X)$ or $\omega(X)$? More precisely, 
Is it possible to bound $\gamma(X)$ from above by a function of $\chi(X)$ or $\omega(X)$? If yes, what is the best such function? What if the set $X$ is ``nice'', for example, closed? How does the answer depend on the dimension $d$?  

%(art gallery covering by star-shaped sets)  a art gallery jen jako related...?; 

For closed connected sets $X$ in the plane, Valentine~\cite{Valentine57} proved that $\omega(X)=2$ implies $\gamma(X)\le 3$. McKinney~\cite{McKinney66} proved that if $X$ is a closed set in $\mathbb{R}^d$, then $\chi(X)=2$ implies $\gamma(X)=2$.
Eggleston~\cite{eggleston74} proved that if $X$ is a compact set in the plane with $\omega(X)$ finite, then $\gamma(X)$ is finite as well.
Breen and Kay~\cite{BreenKay76} were first to show that for planar closed sets $X$, the number $\gamma(X)$ can be bounded by a function of $\omega(X)$. More precisely, they gave an exponential bound $\gamma(X)\le \omega(X)^3\cdot 2^{\omega(X)-2}$. 
Perles and Shelah~\cite{PerlesShelah90} further improved this to a polynomial upper bound $\gamma(X)\le \omega(X)^6$.
The current best known upper bound, 
$\gamma(X) \le 18\omega(X)^3$, is due to Matou\v{s}ek and Valtr~\cite{MatousekValtr99}. 

From the other direction, Breen and Kay~\cite{BreenKay76} presented a construction by Perles of a closed planar set $X$ with $\gamma(X)\ge \Omega(\omega(X)^{3/2})$.
Matou\v{s}ek and Valtr~\cite{MatousekValtr99} found examples of closed planar sets $X$ with $\gamma(X) \ge \Omega(\omega(X)^{2}$).

For arbitrary planar sets $X$, not necessarily closed, $\omega(X)=2$ implies $\gamma(X)\le 6$~\cite{breen74,NiPe13}.
%it is not clear to me whether Breen's proof is incomplete, as NiPe claim
However, there is no upper bound on $\gamma(X)$ for
sets with $\omega(X)=3$. To construct an example, take the unit disc and puncture $\lambda$ one-point holes near its boundary, in the
vertices of a regular convex $\lambda$-gon concentric with the disc. The resulting set $D_{\lambda}$ satisfies $\omega(D_{\lambda})=3$ and
$\gamma(D_{\lambda}) = \lceil \lambda/2 \rceil + 1$~\cite{MatousekValtr99}. Kojman, Perles and Shelah~\cite{KPS90} constructed a planar set $X$ with $\omega(X)=5$ and $\gamma(X) = 2^{\aleph_0}$.

Kojman, Perles and Shelah~\cite{KPS90} proved that if $X$ is a closed planar set with $\gamma(X)$ uncountable, then $X$ contains a perfect subset $P$ such that the convex hull of any three distinct points from $P$ is not contained in $X$. In particular, $P$ has cardinality $2^{\aleph_0}$ and hence also $\gamma(X) = 2^{\aleph_0}$.
Kojman~\cite{Ko01} generalized the notion of the clique in the invisibility graph as follows. For $m\ge 2$, a subset $P$ of $X$ is an \emph{$m$-clique} in $X$ if for every $m$-element subset $S$ of $P$ the convex hull of $S$ is not contained in $X$. Kojman~\cite{Ko01} showed that for every closed planar set $X$ with countable $\gamma(X)$, the topological complexity of all $3$-cliques in $X$, measured by the \emph{Cantor--Bendixson degree}, is bounded by a countable ordinal.
Geschke~\cite{G04} constructed a closed set $X$ in $\mathbb{R}^3$ with $\gamma(X)=2^{\aleph_0}$ such that for every $m\ge 2$, all $m$-cliques in $X$ are countable.

A \emph{one-point hole\/} in a set $X \subset \Rbb^d$ is a point that forms a path-connected component
of $\Rbb^d \setminus X$. Let $\lambda(X)$ be the number of one-point holes in $X$.

The example of the set $D_{\lambda}$ inspired Matou\v{s}ek and Valtr~\cite{MatousekValtr99} to study the properties of planar sets with a limited number of one-point
holes. In particular, they proved the following upper bound on the convexity number.

%The following theorem was proved by Matou\v{s}ek and Valtr~\cite[Theorem 1.1 (ii)]{MatousekValtr99}:
%%%%%%%%%%%%%%%%%%%%%%%%%%%%%%%%%%%%%%%%%%%%%%%%%%%%%%%%%%%%%%%%%%%%%%%%%%%%%%%%%%%%%%%%%%%%%%%%%%
\begin{theorem}[{\cite[Theorem 1.1 (ii)]{MatousekValtr99}}]
\label{thm:MatVal}
Let $X \subseteq \Rbb^2$ be a set with $\omega(X)=\omega < \infty$ and $\lambda(X)=\lambda < \infty$. Then
\[
\gamma(X) \le O(\omega^4 + \lambda \omega^2).
\]
\end{theorem}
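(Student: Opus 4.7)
The plan is to reduce to the closed-set bound $\gamma(Y)\le 18\omega(Y)^3$ of Matou\v{s}ek and Valtr quoted above, paying an $O(\omega)$ factor for the ``non-closedness'' of $X$ and an additive $O(\omega^2)$ per one-point hole.

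\emph{Step 1: fill in the holes and close up.} Let $H=\{h_1,\dots,h_\lambda\}$ be the one-point holes of $X$ and put $X':=X\cup H$. Since each $h_i$ has a punctured neighborhood contained in $X$, a finite clique $C$ of $I(X')$ containing some $h_i$ can be turned into a clique of the same size in $I(X)$: for every other $q\in C$ the segment $h_i q$ contains a point outside $X'$, and this remains true under any sufficiently small perturbation of $h_i$; by finiteness of $C$ one can replace $h_i$ by a nearby $h_i'\in X$ that stays non-visible from every other vertex of $C$ simultaneously. Iterating yields $\omega(X')\le\omega$. Now let $Y:=\overline{X'}$. Since $\Rbb^2\setminus Y$ is open, the same perturbation argument moves any finite clique of $I(Y)$ into $X'$, so $\omega(Y)\le\omega$. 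Applying the closed-set bound to $Y$ gives a cover $Y=C_1\cup\cdots\cup C_N$ with $N\le 18\omega^3$ and each $C_i$ a convex subset of $Y$.

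\emph{Step 2: refine the cover to a cover of $X$.} For each $i$ I need a convex cover of $C_i\cap X=C_i\setminus\bigl((Y\setminus X')\cup H\bigr)$. The missing points split into two kinds: continuous boundary pieces $C_i\cap(Y\setminus X')$ and isolated holes $C_i\cap H$. Removing an isolated hole is local: a convex planar set punctured at a single interior point is the union of $O(1)$ convex sets (cut along a line through the hole), so the total cost charged to the holes is $O(\lambda\omega^2)$ provided one verifies that each hole lies in only $O(\omega^2)$ of the cover pieces $C_i$, which should follow from a careful choice of the initial closed-case cover. For the boundary part I would prove a ``local'' version of the closed-set bound: each $C_i\cap X'$ admits a convex cover inside $X'$ of size $O(\omega)$, using $\omega(X')\le\omega$ to control the shape of $Y\setminus X'$ inside a convex piece. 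Summed over the $N=O(\omega^3)$ pieces, this yields the $O(\omega^4)$ term, and adding the hole contribution completes the bound $O(\omega^4+\lambda\omega^2)$.

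\emph{Main obstacle.} The hard part is the local closed-to-open reduction producing the $O(\omega)$ blow-up per cover piece; this seems to need a structural analysis showing that the boundary part $Y\setminus X'$ can intersect a convex region of $Y$ only in a way whose ``non-convexity'' is controlled by $\omega$. A secondary quantitative subtlety is obtaining $O(\omega^2)$ per hole rather than the naive $O(\omega^3)$, which requires either bounding the depth of the initial cover or arguing that only $O(\omega^2)$ pieces can accumulate around any given hole.
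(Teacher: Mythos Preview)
The present paper does not prove this theorem; it is quoted from~\cite{MatousekValtr99}. The paper does, however, summarize the original argument in Section~\ref{sec:highdim}: one first reduces $X$ to a closed \emph{polygonal} set (via the relative convex hull $\conv_X(P)$ of a suitable finite $P\subset X$; cf.\ Lemma~5.2 of~\cite{MatousekValtr99}), then decomposes this polygonal set into \emph{pseudotrapezoids} that behave essentially like star-shaped sets, and finally applies the Breen--Kay bound $\gamma\le 2\omega$ for closed star-shaped sets to each piece.

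Your plan is structurally different, and the two gaps you yourself flag are genuine, not cosmetic. For the ``local closed-to-open reduction'': if $C_i$ is merely a convex subset of $Y=\overline{X'}$, then $C_i\cap X'$ is a convex set with arbitrary portions of its boundary deleted, and bounding $\gamma(C_i\cap X')$ by $O(\omega)$ is essentially as hard as the theorem you are trying to prove---nothing about the convexity of $C_i$ constrains how $Y\setminus X'$ sits inside it. The original proof avoids this precisely by passing not to the closure but to a polygonal set, whose finite combinatorial structure is what makes the pseudotrapezoid decomposition possible. For the hole term, the black-box application of the closed-set bound gives you no control on the depth of the cover, so the honest count is $O(1)$ per (hole, piece) pair, i.e.\ $O(\lambda\omega^3)$; bringing this down to $O(\lambda\omega^2)$ would require re-opening the construction of the cover, at which point you are essentially redoing the argument of~\cite{MatousekValtr99} rather than reducing to it.
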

%%%%%%%%%%%%%%%%%%%%%%%%%%%%%%%%%%%%%%%%%%%%%%%%%%%%%%%%%%%%%%%%%%%%%%%%%%%%%%%%%%%%%%%%%%%%%%%%%%

%For any planar closed set $X$ with
%$\lambda(X)=\lambda$ and $\omega(X) = \omega$, they showed that $\gamma(X) \le O(\omega^4 + \omega^2 \lambda)$
%(see our Theorem~\ref{thm:MatVal}).

For any $\omega \ge 3$ and $\lambda \ge 0$ they also found sets $X$ with
$\omega(X) = \omega$, $\lambda(X)=\lambda$ and $\gamma(X) \ge \Omega(\omega^3 + \omega \lambda)$.

Matou\v{s}ek and Valtr~\cite{MatousekValtr99} conjectured that for an arbitrary planar set $X$, the value
of $\gamma(X)$ is bounded by a function of $\chi(X)$. Then $\chi(X)$ cannot be bounded by a function of
$\omega(X)$ as the examples $D_{\lambda}$ show.

Lawrence and Morris~\cite{LawrenceMorris09} proved that for every $k$ there exists $n_0(k) \le 2^{2^{O(k)}}$ 
such that whenever $S$ is a set of finitely many points in the plane and $|S| \ge n_0(k)$, 
then $\chi(\Rbb^2 \setminus S) \ge k$.\footnote{The graph
$\mathcal{G}_S$ in the paper of Lawrence and Morris is precisely the invisibility graph of $\Rbb^2 \setminus S$.
The proof of Theorem 6 is not correct in the case when $S$ is not in general position, 
because some of the points $q_{i,j}$ may coincide with some points from $S \setminus S'$, 
in which case they are not vertices of $\mathcal{G}_S$.
However it can be easily corrected by fixing, for every $i$, the line $l_i$ passing through $p_i$, 
with all the remaining points of $S'$ on one side and avoiding all points of $S$ other than $p_i$.
For every $1\le i < j \le n$, the point $q_{i,j}$ can then be defined as the intersection of the lines 
$l_i$ and $l_j$.
}
Thus, whenever $X$ is the complement of a set of finitely many points in the plane, 
$\lambda(X)$ can be bounded in terms of $\chi(X)$. 
This implies, by Theorem~\ref{thm:MatVal}, that the value of $\gamma(X)$ can be bounded in terms of $\chi(X)$,
settling the conjecture of Matou\v{s}ek and Valtr in the special case when $X$ is the complement of a finite set of points.

In this paper, we strengthen the result of Lawrence and Morris~\cite{LawrenceMorris09} 
and settle the conjecture for every planar set $X$.

\begin{theorem}
\label{thm:chromclique}
Any set $X\subseteq \Rbb^2$ with $\chi(X)=\chi < \infty$ satisfies
\[
%\gamma(X) \le O(\chi^4 + 2^{4 R_{3}(4;\chi)} \cdot \chi^3).
%\gamma(X) \le O\bigl(2^{4 T_2(\chi)} \cdot \chi^3\bigr).
\gamma(X) \le O\bigl(2^{2^{2^{\chi}+2}} \cdot \chi^3\bigr).
\]
\end{theorem}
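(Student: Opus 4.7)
The plan is to combine Theorem~\ref{thm:MatVal} with a new upper bound on $\lambda(X)$ in terms of $\chi(X)$. Since trivially $\omega(X) \le \chi(X) = \chi$, Theorem~\ref{thm:MatVal} immediately gives $\gamma(X) \le O(\chi^4 + \lambda(X) \chi^2)$. Therefore, if I can prove the \emph{key lemma} that $\lambda(X) \le 2^{2^{2^\chi+2}} \cdot O(\chi)$ for every planar $X$ with $\chi(X) \le \chi$, both terms fit inside $O(2^{2^{2^\chi+2}} \chi^3)$ and the theorem follows at once. So the whole proof reduces to controlling $\lambda(X)$ from above by a triply exponential function of $\chi(X)$.

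The real content lies in the key lemma, which is a strengthening of Lawrence--Morris from complements of finite point sets to arbitrary planar sets. I would prove it by contrapositive, aiming at $\chi(X) \ge \Omega(\log\log\log\lambda(X))$. First, pick a witness point $p_i \in X$ close to each one-point hole $h_i$; the visibility of two witnesses $p_i, p_j$ within $X$ is governed by the local behaviour of $X$ around $h_i$ and $h_j$ and by what happens along the segment $\overline{h_i h_j}$. A Ramsey-type reduction on ordered pairs (and possibly triples) of the holes then produces a subfamily of holes in convex position whose mutual local visibility pattern is uniform, of size roughly doubly logarithmic in $\lambda$. Inside such a uniform family I would imitate (and generalise) the construction achieving $\chi(D_\lambda) \ge \log\log\lambda$ mentioned in the abstract: by iterating the selection of witnesses at successive scales around the holes and using a shift-graph-type argument, obtain a subgraph of $I(X)$ of chromatic number $\Omega(\log\log|H|)$, which translates to the required triply logarithmic bound on $\chi(X)$.

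The main obstacle is that, unlike in the Lawrence--Morris setting, a general planar $X$ can have obstructions to visibility that are not one-point holes, so two nearby witnesses may fail to see each other for reasons unrelated to the holes. To cope with this I would first carve the plane by a bounded (in terms of $\chi$) number of carefully chosen lines so that, inside each resulting region, the set $X$ behaves with respect to visibility between one-point holes as if these holes were the only obstructions. Producing such a decomposition is what genuinely uses the assumption $\chi(X) \le \chi$, and it is plausibly what forces the extra exponential, turning the double exponential of Lawrence--Morris into the triple exponential of Theorem~\ref{thm:chromclique}. Once this decomposition is available, the Lawrence--Morris iteration applied inside a single region produces the required chromatic lower bound, closing the argument.
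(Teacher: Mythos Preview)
Your overall reduction is correct and is exactly what the paper does: bound $\lambda(X)$ in terms of $\chi(X)$ and then plug into Theorem~\ref{thm:MatVal}. The Erd\H{o}s--Szekeres step to pass to holes in convex position is also the paper's Lemma~\ref{lem:selectinplane}, and the shift-graph idea is the right tool.

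The gap is in your handling of the key lemma, and it comes from a misdiagnosis of the difficulty. You worry that obstructions other than the one-point holes could spoil visibility between your witnesses, and you propose a decomposition of the plane to neutralise them. This is unnecessary and is also where your argument becomes vague. To lower-bound $\chi(X)$ one only needs to exhibit pairs of points of $X$ that do \emph{not} see each other; extra obstructions can only add edges to $I(X)$ and hence only help. The genuine issue is the opposite one: you must place your auxiliary points \emph{inside} $X$, and a single witness $p_i$ per hole will not support a shift-graph argument on triples. The paper's key observation (Observation~\ref{obs:hole}) is that, since $\omega(X)<\infty$, from every one-point hole $q$ and in every direction there is a short open segment lying in $X$. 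Using this, for holes $q_1,\dots,q_n$ in convex position one draws for each $i<j$ a line $l(i,j)$ through $q_i$ passing just beside $q_j$, and sets $p(i,j,k):=l(i,j)\cap l(j,k)\in X$. Then for $i<j<k<l$ the points $p(i,j,k)$ and $p(j,k,l)$ lie on $l(j,k)$ with the hole $q_j$ strictly between them, so they are invisible in $X$ regardless of anything else happening in $X$. A proper $\chi$-colouring of $X$ thus properly colours the shift graph $S(n,3)$, giving $n\le 2^{2^{\chi}}$ directly; combined with Lemma~\ref{lem:selectinplane} this yields $\lambda(X)<\chi\cdot 2^{4\cdot 2^{2^{\chi}}}=\chi\cdot 2^{2^{2^{\chi}+2}}$, with no plane decomposition and no extra exponential.
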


We prove Theorem~\ref{thm:chromclique} in Section~\ref{sec:mainres}. 
In Section~\ref{sec:highdim}, we show that for every dimension $d$, $\lambda(X)$ 
can be bounded in terms of $\chi(X)$ for all sets $X \subset \Rbb^d$.
This answers Question 6 of Lawrence and Morris~\cite{LawrenceMorris09}.

A set $X$ is \emph{star-shaped} if $X$ contains a point $x \in X$ that sees every other point of $X$.
In Sections~\ref{sec:dim6} and~\ref{sec:dim5} we show that $\chi$ and $\gamma$ can be separated in dimensions $5$ and more, even for closed star-shaped sets. 

\begin{theorem}
\label{thm:dim6}
For every positive integer $g$ there exist star-shaped sets
\begin{enumerate}[{\rm 1)}]
\item \label{part:dim6a} $X \subset \Rbb^6$ satisfying $\chi(X)=2$ and $\gamma(X) \ge g$, and
\item \label{part:dim6b} $X_c \subset \Rbb^6$ that is closed and satisfies $\chi(X_c) \le 4$ and $\gamma(X_c) \ge g$.
\end{enumerate}
\end{theorem}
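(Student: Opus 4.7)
The argument is by explicit construction. For part~(\ref{part:dim6a}), I aim to produce a star-shaped set $X \subset \Rbb^6$ as a union of several convex "petals" all meeting at an apex $x_0$, arranged so that the invisibility graph $I(X)$ is bipartite while the entire union cannot be covered by fewer than $g$ convex subsets. Each petal is individually convex, and hence is a seeing subset; the 2-colouring groups the petals into two classes $\mathcal{A}, \mathcal{B}$ so that every segment connecting two points in same-coloured petals is routed through $X$ using the opposite class as a \emph{visibility bridge}. The four dimensions of extra room in $\Rbb^6$ (over the plane) allow the petals to be placed so that the needed cross-segments lie in $X$ without forcing same-coloured petals to merge into convex super-pieces, a coincidence that Theorem~\ref{thm:chromclique} rules out in the plane.

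For the convex-covering lower bound I pick $g$ witness pairs $(u_i, v_i)$ with $u_i, v_i$ in distinct petals and $[u_i, v_i] \not\subset X$, and then prove a separation lemma: for distinct $i \ne j$, no single convex subset of $X$ can contain points from more than a bounded number of these pairs, because each pair lies on opposite sides of a specific visibility obstruction built into $X$. A pigeonhole argument then yields $\gamma(X) \ge g$. A single non-seeing pair such as $(u_1, v_1)$ certifies $\chi(X) \ge 2$, so combined with the bipartite colouring we obtain $\chi(X) = 2$ exactly.

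For part~(\ref{part:dim6b}), I modify the construction by replacing the open visibility obstructions (such as point holes or other open features of $X$) by closed analogues, producing a closed star-shaped set $X_c$. The closing introduces new invisibility edges because segments tangent to or grazing the closed obstacles are now blocked; this can destroy the strict 2-colouring, but the damage is controlled: each of the two original colour classes can be split into at most two sub-classes according to a tangency direction, yielding $\chi(X_c) \le 4$. The lower bound $\gamma(X_c) \ge g$ is preserved by the same witness-pair argument. The main obstacle is orchestrating the petals and bridges so that both colour classes are \emph{genuine} seeing subsets (requiring many cross-segments to lie in $X$) while the convex-cover number stays linear in $g$ (requiring $X$ to remain full of visibility defects); this tension is resolvable in $\Rbb^6$ but, by Theorem~\ref{thm:chromclique}, not in the plane.
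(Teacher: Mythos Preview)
This is not a proof; it is an outline in which every essential step is left as an assertion. Three specific gaps:

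\textbf{(i) No set is actually defined.} You describe ``petals'' meeting at an apex and ``visibility bridges'', but never say what these objects are, how many there are, or how they sit in $\Rbb^6$. Without a concrete $X$ there is nothing to verify.

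\textbf{(ii) The colouring argument is confused.} You write that a segment between same-coloured points is ``routed through $X$ using the opposite class as a visibility bridge''. A segment $\overline{xy}$ is a fixed straight line; either it lies in $X$ or it does not. There is no routing. For a colour class $\mathcal{A}$ to be a seeing subset you need $\conv(P_i\cup P_j)\subset X$ for every pair of petals $P_i,P_j\in\mathcal{A}$, and you give no reason why this should hold.

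\textbf{(iii) The heart of the theorem is asserted, not proved.} You correctly identify the tension: forcing each colour class to be a seeing subset tends to create large convex subsets of $X$, which pushes $\gamma(X)$ down. You then write that ``this tension is resolvable in $\Rbb^6$'' and stop. That sentence is precisely what requires proof. Likewise the ``separation lemma'' is stated with no argument and with hypotheses that refer to obstructions you never constructed.

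The paper's proof is entirely explicit and structurally different. It starts from the $6$-dimensional cyclic polytope $P_n$, which is neighborly: every triple of vertices spans a $2$-face. A random orientation of the edges of $K_n$ guarantees that every set of roughly $2\log n$ vertices contains a directed triangle; one then punctures a single point at the barycentre of each $2$-face whose boundary is directed. The $2$-colouring is written down face by face (black/white, with an explicit partition of each punctured triangle), and $\gamma(X)\ge n/(2\log n+2)$ follows because any convex subset of $X$ containing $2\log n$ vertices of $P_n$ would contain a directed-triangle face and hence a hole. For the closed version, the orientation is replaced by a $3$-edge-colouring of $K_n$ with a rainbow triangle in every $O(\log n)$-set, the point-holes are thickened to small removed simplices, and an explicit $4$-colouring is given. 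None of this is captured by the petal sketch.
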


\begin{theorem}
\label{thm:dim5}
For every positive integer $g$ there exist star-shaped sets
\begin{enumerate}[{\rm 1)}]
\item \label{part:dim5a} $X \subset \Rbb^5$ satisfying $\chi(X)=2$ and $\gamma(X) \ge g$, and
\item \label{part:dim5b} $X_c \subset \Rbb^5$ that is closed and satisfies $\chi(X_c) \le 6$ and $\gamma(X_c) \ge g$.
\end{enumerate}
\end{theorem}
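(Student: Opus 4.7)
The plan is to adapt the two star-shaped constructions used in Theorem~\ref{thm:dim6} for $\Rbb^6$ and show that one ambient dimension can be saved by a careful choice of the base configuration. Both parts start with a ``base'' set $Y$ placed in an affine hyperplane $H \subset \Rbb^5$ that does not contain a prospective apex $x_0 \in \Rbb^5 \setminus H$, where $Y$ is chosen so that $\gamma(Y) \ge g$. For part~\ref{part:dim5a} I would take $Y$ to be a higher-dimensional analogue of the punctured disc $D_\lambda$ with $\lambda = \Theta(g)$ one-point holes; the set $X$ is then built as the union of triangles $\conv(x_0, y, y')$ for $y, y' \in Y$, from which one removes exactly those rays from $x_0$ that pass through the one-point holes of $Y$. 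Since every point of $X \setminus \{x_0\}$ is collinear with $x_0$ and a surviving base point, and any two non-apex points lie in a common such triangle, the partition $X = \{x_0\} \cup (X \setminus \{x_0\})$ witnesses $\chi(X) = 2$.

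The lower bound $\gamma(X) \ge g$ would follow by considering the central projection $\pi$ from $x_0$ onto $H$: the image $\pi(C)$ of any convex subset $C \subseteq X$ is a convex subset of $Y$, so any cover of $X$ by $k$ convex sets yields a cover of $Y$ by $k$ convex sets, forcing $k \ge g$. The main obstacle, and the place where the dimension count tightens, is the visibility verification: for every pair $p, q \in X \setminus \{x_0\}$ the relative interior of $\conv(x_0, p, q)$ must avoid every excluded ray. In $\Rbb^5$ the hyperplane $H$ is only four-dimensional, so triangles through $x_0$ sit inside a $4$-dimensional projective screen centered at $x_0$; the holes of $Y$ must therefore be placed in sufficiently general position within $H$ (for instance along a single smooth curve or a low-dimensional flat) so that no forbidden triangle is forced. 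The analogous step is automatic in $\Rbb^6$, and it is exactly this general-position argument that I expect to consume most of the technical work for part~\ref{part:dim5a}.

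For the closed version in part~\ref{part:dim5b} the one-point holes must be replaced by closed obstacles, and this in general destroys mutual visibility of many pairs of non-apex points and so raises $\chi$. I would thicken each hole to a small closed ``plug'' transverse to the associated ray, designed so that its interference with visibility can be described by finitely many linear inequalities in the base. The convexity lower bound then runs as in part~\ref{part:dim5a}, via the same central projection argument, and the only new content is the upper bound on $\chi(X_c)$. The value $6$ strongly suggests partitioning $X_c$ into six seeing subsets obtained by combining, say, a three-part radial stratification of each ray from $x_0$ with a binary choice coming from which side of a fixed hyperplane through $x_0$ the two endpoints lie; one then checks that within each of the six classes every pair of points is joined by an unobstructed segment. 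I expect this last verification to be the main technical obstacle, since one has to balance enlarging the plugs enough to keep $X_c$ closed against shrinking them enough to preserve both the $6$-colouring and $\gamma(X_c) \ge g$; a careful case analysis based on how two points of one colour class can possibly be separated by a plug should close the argument.
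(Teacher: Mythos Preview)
Your proposal has a genuine gap in the $\chi$-argument for part~\ref{part:dim5a}, and it propagates to part~\ref{part:dim5b}.

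You claim that the partition $\{x_0\}\cup(X\setminus\{x_0\})$ witnesses $\chi(X)=2$, i.e.\ that any two non-apex points see each other in $X$. But the base $Y=X\cap H$ sits inside $X$ as a full slice, and for points $y,y'\in Y$ the segment $\overline{yy'}$ lies entirely in $H$; it is contained in $X$ if and only if it is contained in $Y$. By design $Y$ has one-point holes, so there are many pairs $y,y'\in Y$ with $\overline{yy'}\not\subseteq Y$, and for each such pair the segment $\overline{yy'}$ meets the removed ray from $x_0$ through the corresponding hole exactly at that hole. Hence $X\setminus\{x_0\}$ is \emph{not} a seeing subset; in fact $I(Y)$ is an induced subgraph of $I(X)$, so $\chi(X)\ge\chi(Y)$ and $\omega(X)\ge\omega(Y)$. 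For a four-dimensional punctured ball with $\lambda$ holes this already forces $\omega(Y)\ge 3$, and if the holes are placed ``along a single smooth curve or a low-dimensional flat'' as you suggest, the situation is worse: holes on a line give $\omega(Y)\ge\lambda+1$ from that line alone. Your general-position manoeuvre cannot repair this, because the obstruction is intrinsic to the base, not to how the cone sits in $\Rbb^5$. The projection argument for $\gamma(X)\ge\gamma(Y)$ is fine, but it is the easy half.

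The paper's constructions avoid this trap by \emph{not} coning over a pre-built high-$\gamma$ base. Instead they start from the $5$-dimensional cyclic polytope $P_n$ and remove one point (or a tiny simplex) from each $2$-face containing the distinguished vertex $v_1$. The lower bound $\gamma\ge\Omega(\log n/\log\log n)$ comes from a pigeonhole argument on the ``partition types'' induced on the edges $\overline{v_1v_i}$ by a putative convex cover. The delicate part is the upper bound on $\chi$: the holes are placed at carefully chosen heights on the medians from $v_1$ (geometrically decreasing relative heights) so that the invisibility graph restricted to the edges $\overline{v_1v_i}$ is actually a forest, hence $2$-colourable; this colouring is then extended to the rest of $P_n$. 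An alternative construction places the holes with algebraically independent coordinates and uses the Magnus embedding of free metabelian groups to control odd cycles. For the closed version the points are fattened to flat simplices and a $6$-colouring is built by hand. None of this is visible from a cone-over-a-base picture; the essential idea you are missing is that the holes must live in $2$-dimensional faces of a higher-dimensional polytope so that most segments have enough ``room'' to avoid them, while the specific placement on medians is what makes the residual invisibility graph bipartite.
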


\begin{problem}
\label{prob:sep34}
Does there exist a function $f$ such that
%\[
$\gamma(X) \le f(\chi(X))$
%\]
\begin{enumerate}[{\rm 1)}]
\item
for every set $X \subseteq \Rbb^3$?
\item
for every set $X \subseteq \Rbb^4$?
\end{enumerate}

\end{problem}

All logarithms in this paper are binary.
The \emph{tower function} $T_l(k)$ is defined recursively as $T_0(k)=k$ and $T_h(k) = 2^{T_{h-1}(k)}$.
Its inverse is the iterated logarithm $\log^{(l)}(n)$, that is, $\log^{(0)}(n)=n$ and $\log^{(l)}(n) = \log(\log^{(l-1)}(n))$.
We use the notation $\overline{xy}$ for the straight line segment between points $x$ and $y$.

%==========================================================================================================

\section{Proof of Theorem~\ref{thm:chromclique}}
\label{sec:mainres}

%For the proof of Theorem~\ref{thm:chromclique}, we will need only the 2-dimensional case of the lemmas in this section.
%We present the lemmas for general dimension, because we find them interesting on their own.

%The following lemma gives a lower bound on $\chi(X)$ in terms of the number of one-point holes in $X$ that
%are in convex position.

In this and the next section we will use the following observation about one-point holes.
\begin{observation}
\label{obs:hole}
Let $q$ be a one-point hole in a set $X \subseteq \mathbb{R}^d$ with $\omega(X)<\infty$.
For every vector $x \in \mathbb{R}^d$ there is an $\varepsilon>0$ such that the open segment
between $q$ and $q+\varepsilon x$ is contained in $X$.
\end{observation}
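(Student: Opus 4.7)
The plan is to argue by contradiction: assume that for some direction $x$ no open segment $(q, q+\varepsilon x)$ lies in $X$, and use this to construct an infinite clique in $I(X)$, contradicting $\omega(X)<\infty$. It is natural to parametrize the ray by $f(t)=q+tx$, $t\ge 0$, and study the two sets $A=\{t>0 : f(t)\in X\}$ and $B=\{t>0 : f(t)\notin X\}$. Our hypothesis says that $0$ is a limit point of $B$.

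First I would observe that $0$ must also be a limit point of $A$. Otherwise there is $\delta>0$ with $(0,\delta)\subseteq B$, i.e.\ $f((0,\delta))\subseteq \Rbb^d\setminus X$; together with $q\notin X$, this makes the segment $\{q+tx : 0\le t<\delta\}$ a continuous path in $\Rbb^d\setminus X$ connecting $q$ to other points, contradicting the assumption that $\{q\}$ is a full path-connected component of $\Rbb^d\setminus X$.

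Next I would show that $A$ has infinitely many connected components inside every neighborhood $(0,\varepsilon)$ of $0$. Otherwise, for some $\varepsilon$, the set $A\cap(0,\varepsilon)$ would consist of finitely many interval components; since $A$ accumulates at $0$, one such component $C$ would have $\inf C = 0$, and hence $C\supseteq(0,\delta)$ for some $\delta>0$, contradicting the density of $B$ near $0$. From this one can select $u_1>u_2>\cdots\to 0^+$ with the $u_n$ lying in pairwise distinct components of $A$, and set $s_n=f(u_n)\in X$.

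Finally, for any $n\ne m$ with (say) $u_n<u_m$, the fact that $u_n,u_m$ lie in different components of $A$ forces some $t\in(u_n,u_m)$ with $t\in B$, so the point $f(t)=q+tx$ lies on $\overline{s_n s_m}$ but not in $X$; thus $s_n$ and $s_m$ do not see each other. Hence $\{s_n : n\ge 1\}$ is an infinite clique in $I(X)$, the desired contradiction. The only slightly delicate point is the component-counting step; everything else is a direct translation of "one-point hole" and the collinear obstruction along the ray.
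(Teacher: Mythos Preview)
Your proof is correct and follows essentially the same approach as the paper: both argue by contradiction, observe that if no initial open segment lies in $X$ then either a whole initial segment lies outside $X$ (contradicting the one-point hole) or one can find a sequence in $X$ converging to $q$ with points outside $X$ between consecutive terms (giving an infinite clique). Your component-counting argument for $A$ is simply a more explicit justification of the dichotomy that the paper states in one sentence.
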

\begin{proof}
For contradiction, suppose that there is a vector $x$ such that the open segment $s_{\varepsilon}$ between $q$ and $q+\varepsilon x$ is not
contained in $X$ for any $\varepsilon>0$. Now either the whole segment $s_{\varepsilon}$ is contained in
$\mathbb{R}^d \setminus X$ for some $\varepsilon>0$, or there is an infinite sequence of points $q_n\in X \cap s_{1}$ converging to $q$
such that for every $n$, there is at least one point of $\mathbb{R}^d \setminus X$ between $q_n$ and $q_{n+1}$.
In the first case, the hole is no longer a path-connected component of $\mathbb{R}^d \setminus X$.
In the second case, the sequence $q_n$ forms a clique of infinite size. Either way, we get a contradiction.
\end{proof}

Points $q_1, q_2, \dots , q_n$ are in \emph{clockwise convex position} if they form the vertices of a convex polygon
in the clockwise order. In particular, they are in general position and, for every $i<j$, the points
$q_{j+1}, q_{j+2}, \dots, q_{n}$ lie in the same half-plane of the line determined by $q_i$ and $q_j$.

To derive Theorem~\ref{thm:chromclique} from Theorem~\ref{thm:MatVal}, we need to show that the number of one-point
holes is bounded from above by a function of the chromatic number. 
We first show this in the special case
of one-point holes in convex position.
%The following lemma shows this in the special case of one-point holes in convex position. Lemma~\ref{lem:selectinplane} then allows to generalize it to the case of one-point holes in an arbitrary position.

%%%%%%%%%%%%%%%%%%%%%%%%%%%%%%%%%%%%%%%%%%%%%%%%%%%%%%%%%%%%%%%%%%%%%%%%%%%%%%%%%%%%%%%%%%%%%%%%%%
\begin{lemma}
\label{lem:specholesinplane}
Let $X \subseteq \Rbb^2$ be a set with $n$ one-point holes $q_1, q_2, \dots , q_n$
in clockwise convex position. Then
\[
n \le T_2(\chi(X)).
\]
\end{lemma}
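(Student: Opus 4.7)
We argue by contradiction: fix a proper colouring $c\colon X \to \{1,\dots,\chi\}$ by seeing subsets; we aim to show that $n > T_2(\chi) = 2^{2^\chi}$ forces a monochromatic visible pair in~$X$. The first step produces local witnesses. For each hole $q_i$ and every direction $\vec v$, Observation~\ref{obs:hole} gives a short open segment $(q_i, q_i+\varepsilon\vec v)$ contained in~$X$, so arbitrarily close to each hole lies an abundance of $X$-points in every direction. For each pair $i<j$ we choose witness points $a_{ij}\in X$ close to $q_i$ and $b_{ij}\in X$ close to $q_j$ such that $\overline{a_{ij}b_{ij}}\subseteq X$; these two points then see each other and must receive distinct colours. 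Applying Observation~\ref{obs:hole} at $q_i$ in the direction of $q_j$ (and symmetrically at $q_j$) yields two short initial open segments on the line~$\ell_{ij}$ through $q_i$ and $q_j$: when they overlap, the open diagonal $(q_i,q_j)$ lies in $X$ and witnesses exist on $\ell_{ij}$. Otherwise we rotate the chosen directions slightly; the convex position of the holes guarantees that nearby chords avoid every other one-point hole, and a density/continuity argument exploiting the finiteness of~$\chi$ selects a perturbed chord actually contained in~$X$.

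With witnesses in hand, we label each pair $\{i,j\}$ by the colour pair $(c(a_{ij}),c(b_{ij}))\in\{1,\dots,\chi\}^2$, colouring the edges of $K_n$ in at most $\chi^2$ colours. A first Ramsey/Erd\H{o}s--Szekeres-style pigeonhole produces a subset $I\subseteq\{1,\dots,n\}$ of size at least $2^\chi+1$ on which these edge-labels stabilize; this is where $n$ of order $T_2(\chi)$ is required. On~$I$, each hole $q_i$ carries a second-level signature, namely a subset of $\{1,\dots,\chi\}$---for instance, the set of colours that appear arbitrarily close to~$q_i$ along the witness directions toward other holes of~$I$. Since there are only $2^\chi$ such signatures and $|I|>2^\chi$, two holes $q_i,q_j\in I$ must share one. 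Coupled with the stabilized witness colours, this common signature forces two $X$-points of the same colour whose connecting segment lies in~$X$, contradicting the properness of~$c$.

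The main obstacle is the witness construction of the first step. A priori the set~$X$ can contain arbitrary non-$X$ regions on or near the segment $\overline{q_iq_j}$; convex position is needed to prevent any other one-point hole from lying on chords close to the diagonal, and the finiteness of $\chi$ controls the remaining obstructions near each hole via Observation~\ref{obs:hole}. Once the witnesses are in place, the combinatorial part is a fairly standard double pigeonhole yielding the claimed tower-of-two bound $n\le T_2(\chi)$.
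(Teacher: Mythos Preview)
Your proposal has a fundamental error: the direction of the invisibility graph is reversed throughout. In $I(X)$, two points are adjacent precisely when they do \emph{not} see each other, so a proper colouring forces \emph{non-seeing} pairs to receive distinct colours; two points that \emph{do} see each other may perfectly well share a colour. Thus when you write that $a_{ij}$ and $b_{ij}$ ``see each other and must receive distinct colours'', and later that a common signature ``forces two $X$-points of the same colour whose connecting segment lies in $X$, contradicting the properness of $c$'', both statements are the opposite of what is true, and neither yields a contradiction. What you actually need to produce is a monochromatic pair whose segment is \emph{blocked} by some hole.

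Even setting this aside, the witness construction in your first step is not justified: nothing prevents the region between $q_i$ and $q_j$ from containing large pieces of $\mathbb{R}^2\setminus X$, and the ``density/continuity argument exploiting the finiteness of $\chi$'' is not an argument but a hope. The paper avoids this difficulty entirely by a different mechanism: for each pair $i<j$ it fixes a line $l(i,j)$ through $q_i$ passing just outside $q_j$, and for each triple $i<j<k$ the intersection point $p(i,j,k)=l(i,j)\cap l(j,k)$ is arranged (via Observation~\ref{obs:hole}) to lie in $X$, very close to $q_j$. The key geometric fact is that $p(i,j,k)$ and $p(j,k,l)$ both lie on $l(j,k)$ with the hole $q_j$ strictly between them, so they do \emph{not} see each other. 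Colouring the triple $\{i,j,k\}$ by $c(p(i,j,k))$ then gives a proper colouring of the shift graph $S(n,3)$, whose chromatic number is at least $\log\log n$; this is what forces $n\le T_2(\chi)$. Your two-level pigeonhole on edge-labels and vertex-signatures does not produce any blocked monochromatic pair, and the tower bound $T_2$ in the paper arises from the shift graph, not from Ramsey numbers for edge-coloured $K_n$.
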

%%%%%%%%%%%%%%%%%%%%%%%%%%%%%%%%%%%%%%%%%%%%%%%%%%%%%%%%%%%%%%%%%%%%%%%%%%%%%%%%%%%%%%%%%%%%%%%%%%

\begin{proof}
For a pair of indices $i,j$, with $1 \le i < j \le n$, let $l(i, j)$ be the line containing
the point $q_{i}$ and passing in a small positive distance $d(i, j)$ (which we specify later) from $q_{j}$
in the direction from $q_{j}$ such that $q_{j}$ lies in the same half-plane %determined by $l(i, j)$
as the points $\{q_{j+1}, q_{j+2}, \dots q_n\}$.

For a triple of indices $i,j,k$, with $1 \le i < j < k \le n$, let
$p(i, j, k) \mathrel{\mathop:}= l(i, j) \cap l(j, k)$, which is a point near $q_{j}$.

The distances $d(i, j)$ will be set to satisfy the following conditions (see Figure~\ref{fig:lines}).
\begin{enumerate}
\item[(i)] The lines $l(i, j)$ are in general position.
\item[(ii)] Each $l(i, j)$ contains exactly one of the holes (which is $q_{i}$).
\item[(iii)] For every pair $(i,j)$ the point $q_{j}$ and all the points $p(j, k, l)$ lie in the same half-plane
determined by $l(i, j)$ as the points $\{q_{j+1}, q_{j+2}, \dots q_n\}$.
\item[(iv)] Every point $p(i, j, k)$ is in $X$ and is closer to $q_j$ than $q_{i}$. 
\end{enumerate}

\myfig{lines}{1}{An example of four one-point holes in clockwise convex position and the corresponding lines.}

Since the one-point holes are in clockwise convex position,
%and the lines determined by pairs of them are in general position (!!!primky mohou byt rovnobezne),
there is an $\varepsilon$ such that whenever all $d(i, j)$ are greater than $0$ and at most $\varepsilon$,
then all the conditions are satisfied except possibly the first part of condition (iv). %one that $p(i, j, k)$ are in $X$.

We start by setting the distances $d(i, n)$ to this $\varepsilon$, which allows us to place the lines $l(i,n)$.
Each distance $d(i, j)$, where $j<n$, is
set when all the lines $l(j, k)$ are already placed. Because $l(j, k)$ avoids $q_{i}$ and
by Observation~\ref{obs:hole}, there is $\varepsilon_{k}$ such that if $d(i, j) \le \varepsilon_{k}$, then
$p(i, j, k)$ lies in $X$. It is now enough to take $d(i, j)$ as the minimum of $\varepsilon$ and all the
$\varepsilon_{k}$.

Consider the graph with vertex set $\binom{[n]}{3}$ and edges between vertices
$\{i, j, k\}$ and $\{j, k, l\}$ for every $1 \le i < j < k < l \le n$.
This graph is called \emph{the shift graph $S(n,3)$} and its chromatic number is known to be
at least $\log^{(2)}(n)$~\cite{HellNesetril}.
We color the vertex $\{i, j, k\}$ with the color of the point
$p(i, j, k)$ in a fixed proper coloring $c$ of $X$ by $\chi(X)$ colors.

Assume that $n > T_2(\chi(X))$.
Then there are two points $s\mathrel{\mathop:}=p(i, j, k)$ and $t\mathrel{\mathop:}=p(j, k, l)$ with $c(s)=c(t)$.
Both $s$ and $t$ lie on the line $l(j, k)$, which also contains the point $q_{j}$.
To show that $q_{j}$ lies between $s$ and $t$ we use the fact that $s$
is the intersection of the lines $l(j, k)$ and $l(i, j)$.
The points $q_{j}$ and $t$ lie in the same direction from $l(i, j)$, but
$q_{j}$ is closer to $s$. Thus $s$ and $t$ are connected by an edge in the
invisibility graph of $X$, a contradiction.
\end{proof}

The following lemma is a slight modification of Exercise 3.1.3 from~\cite{MatousekLDG02}.
%%%%%%%%%%%%%%%%%%%%%%%%%%%%%%%%%%%%%%%%%%%%%%%%%%%%%%%%%%%%%%%%%%%%%%%%%%%%%%%%%%%%%%%%%%%%%%%%%%
\begin{lemma}
\label{lem:selectinplane}
Any set $P \subset \Rbb^2$ of $m\cdot 2^{4n}$ points contains either $m$ points lying on a line
or $n+1$ points in convex position.
\end{lemma}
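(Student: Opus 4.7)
The plan is to reduce to the classical Erdős–Szekeres theorem, which says that any set of at least $2^{2n}+1$ points in general position (no three collinear) in the plane contains $n+1$ points in convex position (since $\binom{2n-2}{n-1}+1 \le 2^{2n}+1$). Starting from the given set $P$ of $N = m\cdot 2^{4n}$ points, the goal is to find either $m$ collinear points or a subset of $2^{2n}+1$ points in general position.

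So I would argue as follows. Assume no line contains $m$ points of $P$; otherwise we are done. I will build a subset $Q \subseteq P$ in general position greedily. Start with $Q = \emptyset$. As long as $|Q| = k$ and some point of $P$ avoids all lines spanned by pairs in $Q$, add such a point to $Q$. At each step, the $k$ points of $Q$ determine at most $\binom{k}{2}$ lines, and by the assumption each such line contains at most $m-1$ points of $P$ beyond the two that define it. Hence the number of points of $P$ forbidden for the next step is at most $k + (m-1)\binom{k}{2}$. As long as this quantity is strictly less than $N$, the greedy step succeeds, so the process produces a set $Q$ in general position of size at least
\[
|Q| \;\ge\; \sqrt{\tfrac{2N}{m-1}} \;\ge\; \sqrt{2}\cdot 2^{2n} \;>\; 2^{2n}+1
\]
(where we may assume $m\ge 3$, since for $m\le 2$ the lemma is either vacuous or trivial).

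Applying the classical Erdős–Szekeres theorem to $Q$ then yields $n+1$ points of $Q \subseteq P$ in convex position, finishing the proof.

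I do not expect any real obstacle; the only slightly delicate point is the counting of forbidden points in the greedy step, where one must remember that each line through two elements of $Q$ contributes at most $m-1$ \emph{additional} forbidden points (since the two defining points are already in $Q$), and verify that the resulting bound $k(k-1)(m-1)/2 + k < N$ can be sustained until $k$ exceeds $2^{2n}$. The constants in $m \cdot 2^{4n}$ are clearly chosen to make this counting work with room to spare.
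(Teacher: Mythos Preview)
Your proposal is correct and follows essentially the same route as the paper: first extract a large subset in general position by a greedy/maximal argument (the paper phrases it as taking an inclusion-wise maximal general-position subset and counting, which is the same idea), then apply the Erd\H{o}s--Szekeres theorem. One small slip: a line through two points of $Q$ carries at most $m-3$ \emph{additional} points of $P$ (not $m-1$), but this only helps your inequality.
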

%%%%%%%%%%%%%%%%%%%%%%%%%%%%%%%%%%%%%%%%%%%%%%%%%%%%%%%%%%%%%%%%%%%%%%%%%%%%%%%%%%%%%%%%%%%%%%%%%%

\begin{proof}
First we show that among any $mp^2$ points, we can either find $m$ points lying
on a line or $p$ points in general position.
Take a set $Q$ with no $m$ points on a line and no $p$ points in general position. Consider an arbitrary
inclusion-wise maximal subset $S \subseteq Q$ of points in general position. Each point from $Q\setminus S$
must thus lie on a line determined by some pair of points of $S$. There are fewer than
$p^2$ such lines and each contains fewer than $m-2$ points from $P\setminus S$.
Hence the total number of points of $Q$ is less than $mp^2$.

By the Erd\H{o}s--Szekeres theorem~\cite{ErdosSzekeres}, any set of $4^n$ points
in general position contains $n+1$ points in convex position.

Combining these two results we obtain that any set of $m\cdot 2^{4n}$ points in $\Rbb^2$
contains either $m$ points lying on a line or $2^{2n}$ points in general position and hence
$n+1$ points in convex position.
\end{proof}

%%%%%%%%%%%%%%%%%%%%%%%%%%%%%%%%%%%%%%%%%%%%%%%%%%%%%%%%%%%%%%%%%%%%%%%%%%%%%%%%%%%%%%%%%%%%%%%%%%
\begin{lemma}
\label{lem:any1holesinplane}
Any set $X \subseteq \Rbb^2$ with $\lambda(X)=\lambda<\infty$ and $\chi(X)=\chi$ satisfies
\[
\lambda < \chi \cdot 2^{4 T_2(\chi)}.
\]
\end{lemma}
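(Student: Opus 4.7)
The plan is to combine the Erd\H{o}s--Szekeres style selection result (Lemma~\ref{lem:selectinplane}) with the bound for holes in convex position (Lemma~\ref{lem:specholesinplane}), after handling the collinear case via a clique construction in $I(X)$.

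Suppose for contradiction that $\lambda \ge \chi \cdot 2^{4 T_2(\chi)}$. I apply Lemma~\ref{lem:selectinplane} to the set of one-point holes with parameters $m = \chi$ and $n = T_2(\chi)$. This yields one of two cases: either $\chi$ of the holes lie on a common line, or $T_2(\chi) + 1$ of them are in convex position. The convex-position case is disposed of immediately by Lemma~\ref{lem:specholesinplane}, which gives $T_2(\chi) + 1 \le T_2(\chi)$, a contradiction.

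To rule out the collinear case, I will show that $\chi$ collinear one-point holes force a clique of size $\chi + 1$ in $I(X)$. Label the holes $q_1, \dots, q_\chi$ in their order along the line $\ell$. For each $i \in \{1, \dots, \chi - 1\}$, the distinct holes $q_i$ and $q_{i+1}$ lie in different path-components of $\Rbb^2 \setminus X$, so the open segment between them on $\ell$ cannot lie entirely in $\Rbb^2 \setminus X$; I pick some $r_i \in X$ in this open segment. For the two ends, Observation~\ref{obs:hole} applied to the directions along $\ell$ pointing away from $q_1$ and $q_\chi$ produces points $r_0, r_\chi \in X \cap \ell$ just beyond $q_1$ and $q_\chi$ respectively. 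The resulting collinear sequence $r_0, r_1, \dots, r_\chi$ consists of $\chi + 1$ points of $X$, and for any $0 \le i < j \le \chi$ the segment $\overline{r_i r_j}$ lies on $\ell$ and contains at least one of the holes $q_{i+1}, \dots, q_j$, hence is not contained in $X$. Therefore $\{r_0, \dots, r_\chi\}$ is a clique of size $\chi + 1$ in $I(X)$, contradicting $\omega(X) \le \chi(X) = \chi$.

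I do not expect a serious obstacle here: the two cases are essentially dictated by the exponent $2^{4 T_2(\chi)}$ in the target bound, and each reduces to a lemma already established. The only slightly delicate point is the collinear argument, where one must explicitly invoke the path-component definition of a one-point hole (to get the in-between points $r_i$) and Observation~\ref{obs:hole} (to get the two end points $r_0, r_\chi$), rather than assuming any neighborhood of a hole sits inside $X$.
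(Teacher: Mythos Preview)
Your proof is correct and follows essentially the same route as the paper: apply Lemma~\ref{lem:selectinplane} with $m=\chi$ and $n=T_2(\chi)$, dispose of the convex-position branch via Lemma~\ref{lem:specholesinplane}, and argue that $\chi$ collinear one-point holes force $\chi(X)>\chi$. The paper handles the collinear case in a single sentence (``$X$ cannot be colored with $\chi$ colors''), whereas you spell out the clique construction explicitly using the path-component definition and Observation~\ref{obs:hole}; this extra care is appropriate and correct.
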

%%%%%%%%%%%%%%%%%%%%%%%%%%%%%%%%%%%%%%%%%%%%%%%%%%%%%%%%%%%%%%%%%%%%%%%%%%%%%%%%%%%%%%%%%%%%%%%%%%

\begin{proof}
By Lemma~\ref{lem:selectinplane}, any set $P$ of
$\chi \cdot 2^{4 T_2(\chi)}$ points in $\Rbb^2$ contains either $\chi$ points on a line
or $T_2(\chi)+1$ points in general position.
If $X$ contains a line with $\chi$ one-point holes then $X$ cannot be colored with $\chi$ colors.
In the second case we get a contradiction by Lemma~\ref{lem:specholesinplane}.
\end{proof}

% We now have all the ingredients to prove Theorem~\ref{thm:chromclique}.

\begin{proof}[{Proof of Theorem~\ref{thm:chromclique}}]
By Lemma~\ref{lem:any1holesinplane}, the number of one-point holes in $X$ satisfies
$\lambda \le \chi \cdot 2^{4 T_2(\chi)}$. Thus by Theorem~\ref{thm:MatVal}
\begin{align*}
\gamma(X) &\le O(\omega^4 + \lambda \omega^2) \\
&\le O(\chi^4 + \chi \cdot 2^{4 T_{2}(\chi)} \cdot \chi^2).
\end{align*}
\qedhere
\end{proof}

%====================================================================

\section{Generalizations to higher dimensions}
\label{sec:highdim}
The proof of Theorem~\ref{thm:chromclique} has two main components, Theorem~\ref{thm:MatVal}
and Lemma~\ref{lem:any1holesinplane}.
In particular, $1$-dimensional and $2$-dimensional holes are handled by Theorem~\ref{thm:MatVal} and
$0$-dimensional holes by Lemma~\ref{lem:any1holesinplane}.
%  in R^2 valtr se postara o ty huste (u kazde huste diry zvoli 1 bod, a ty pak utvori kliku (po pouziti Dilworthovy vety)

We can generalize Lemma~\ref{lem:any1holesinplane} to any dimension $d$ (see Lemma~\ref{lem:any1holes}), but it is unclear whether Theorem~\ref{thm:MatVal} can be generalized to dimension $3$. We know, however, that it cannot be generalized to dimension $5$ or more, due to Theorem~\ref{thm:dim5}.

The proof of Theorem~\ref{thm:MatVal} is composed of three main steps. The set $X$ is reduced to a closed polygonal
set, which is then decomposed into pseudotrapezoids that behave essentially as star-shaped sets.
The last step uses the bound $\gamma(X) \le 2 \omega(X)$ for closed star-shaped sets $X$ proved by
Breen and Kay~\cite[Corollary 3]{BreenKay76}.

A bounded planar set $X$ is \emph{polygonal} if its boundary is composed of finitely many points and open segments, 
each of which is either contained in $X$ or disjoint from $X$.
More generally, we say that a bounded set $X \subset \Rbb^d$ is \emph{polyhedral} if its boundary is composed of finitely
many lower-dimensional polytopes, each of which is either contained in $X$ or disjoint from $X$.
Given a planar set $X$ and a finite set $P \subseteq X$ of points, the \emph{convex hull $\conv_X(P)$ of $P$ relative to $X$}
is the minimum set $Y \subseteq X$ such that $P \subseteq Y$ and every segment $\overline{xy} \subseteq X$ with $x,y \in Y$ 
satisfies $\overline{xy} \subseteq Y$.
Unlike Lemma 5.2 in~\cite{MatousekValtr99}, for $X \subset \Rbb^3$ with $\omega(X)< \infty$ and a finite set $P\subset X$,
the convex hull $\conv_X(P)$
of $P$ relative to $X$ is not always polyhedral. An example of such $X$ is the union of the cylinder
$\{(x,y,z): x^2+y^2 \le 1, |z| \le 1\}$ and two squares $\{(x,y,1): |x|,|y| \le 1\}$ and
$\{(x,y,-1): |x|,|y| \le 1\}$. We choose $P$ to be the set of the eight points with each coordinate
either $-1$ or $1$. Then $\omega(X)=3$ and $\conv_X(P) = X$, which is not a polyhedral set.

As a particular consequence of this fact, a different classification of the holes is needed. For example, the moment curve $\gamma(t)=(t,t^2,t^3)$ in $X=\Rbb^3 \setminus \gamma$ cannot be treated as a one-dimensional hole of $X$, since $\chi(X)=\infty$ (which follows from Lemma~\ref{lem:specholes}) and $\omega(X)=3$ (we leave this as an exercise for the interested reader).

We suggest the following classification of points of the complement of $X$. A point $x \in \Rbb^d \setminus X$
is \emph{$k$-dense} if there is an affine $k$-dimensional subspace $Y$ such that the intersection of every open neighborhood of $x$ with $(\Rbb^d \setminus X)\cap Y$ has positive $k$-dimensional
Lebesgue measure. A point $x \in \Rbb^d \setminus X$ is \emph{$k$-sparse} if it is not $k$-dense. Points that are $d$-dense or $d$-sparse are simply called \emph{dense} or \emph{sparse}, respectively.
%Points that are dense relative to some $k$-dimensional affine subspace can be thought of as points of $d$-dimensional holes and sparse points as points of lower-dimensional holes.

%Lemma~\ref{lem:any1holes} 
Lemma~\ref{lem:specholes} together with Lemma~\ref{lem:selectpoints} bound the maximum number of $k$-sparse points in general position in any $k$-dimensional affine subspace in terms of $\chi(X)$.

Regarding the other parts of the proof of Theorem~\ref{thm:MatVal}, there is no obvious way of generalizing the pseudotrapezoid decomposition in $\Rbb^3$, and we are also missing a generalization of Breen and Kay's result to dimension $3$.

\begin{problem}
Is $\gamma(X)$ or $\chi(X)$ bounded in terms of $\omega(X)$ for closed star-shaped sets $X\subset \Rbb^3$?
\end{problem}

The answer to this question in dimension $4$ is negative. Kojman, Perles and Shelah~\cite{KPS90} constructed a star-shaped set $X \in \mathbb R^4$ with $\omega(X)=2$ and $\chi(X)=2^{\aleph_0}$. We describe a polygonal analogue of this construction. Let $P_n$ be a $4$-dimensional cyclic polytope with $n$ vertices. Note that every pair of vertices forms an edge of $P_n$, therefore the $1$-skeleton of $P_n$ is a straight-line embedding of the complete graph $K_n$. Let $H\subset K_n$ be a triangle-free graph with chromatic number $\Omega(\log n)$; for example, $H$ may be the \emph{shift graph $S(\sqrt{n},2)$}~\cite{HellNesetril}.
Choose a point $x_e$ from each edge $e$ of $P_n$ corresponding to an edge of $H$. By removing all points $x_e$ from $P_n$, we get a star-shaped set $X'$ with $\omega(X')=2$ and $\chi(X')=\Omega(\log n)$. To get a closed set, we remove a small and sufficiently flat wedge-shaped neighborhood of each point $x_e$. The resulting set $X$ still has $\omega(X)=2$ and $\chi(X)=\Omega(\log n)$.

% -jak zobecnit extremalni body der? aby jejich pocet byl omezen funkci omega
%
% -funguje restrikce na polygonalni i ve vyssi dimenzi? - kdyz vezmeme relativni konvexni obal konecne podmnoziny (ktera ma stejnou omega jako puvodni X)
%
% -due to our constr. in dim 6and 5, ...
%
%
% TODO motivace:
% -if chi is bounded by a function of omega for closed star-shaped X in R^3
% -poznamku, ze v R^4 existuje konstrukce, kde je to unbounded [KPS90] (cyklicky mnohosten a zvolime si graf bez trojuhelniku s velkou barevnosti)
% -how to generalize the trapezoidal decomposition from the planar case
%
%
% TODO zminit (mozna az v te sekci) zminit, ze misto 1bodove diry bereme "ridky bod doplnku"

%In Section~\ref{sec:highdim} we provide some lemmas that might be useful for giving bound on $\gamma$
%in terms of $\chi$ in higher dimensions.

%It is possible that $\gamma(X)$ is bounded by a function of the chromatic number even in
%$\Rbb^3$ and $\Rbb^4$. However, an analogue of Theorem~\ref{thm:MatVal} will have to consider also
%holes larger than one-point holes.
%But the lemmas of Section~\ref{sec:mainres} generalize to an arbitrary dimension.
%We present these generalizations, because we find them interesting on their own.

\subsection{Sparse points in higher dimensions}
% In this section we generalize the lemmas of Section~\ref{sec:mainres} to an arbitrary dimension.
% We present these generalizations, because we find them interesting on their own.

%%%%%%%%%%%%%%%%%%%%%%%%%%%%%%%%%%%%%%%%%%%%%%%%%%%%%%%%%%%%%%%%%%%%%%%%%%%%%%%%%%%%%%%%%%%%%%%%%%
We say that an ordered set $(q_1, q_2, \dots, q_n)$ of $n$ points in $\Rbb^d$ is in \emph{same-side position\/}
if they are in general position and for every $d$-tuple $q_{i_1} q_{i_2}, \dots , q_{i_d}$
where $i_1<i_2<\dots<i_d$, the points $q_{i_d+1}, q_{i_d+2}, \dots, q_n$ lie in
a common open half-space determined by the hyperplane spanned by $q_{i_1}, q_{i_2}, \dots , q_{i_d}$.
An unordered set is in \emph{same-side position} if some ordering of its points is in same-side 
position.

Notice that every set of points in clockwise convex position in $\mathbb{R}^2$ is in same-side position.
On the other hand, every set of four or five points in $\mathbb{R}^2$ is in same-side position, 
while some of these sets are not in convex position.
But if both $(q_1, q_2, \dots, q_n)$ and  $(q_n, q_{n-1}, \dots, q_1)$ are in same-side position in $\mathbb{R}^d$, then the set $\{q_1, q_2, \dots, q_n\}$ is in convex position. 
This is easy to verify for $n=d+1$ and the case for general $n$ follows by Carath\'{e}odory's 
theorem (see for example~\cite{MatousekLDG02}).

%!!! zobecneni dukazu pujde asi tak, ze postupne volime ty nadroviny tak, aby ty body doplnku, kterymi jsou definovane, byly k-sparse i vzhledem k tem nadrovinam a prunikum s predchozimi nadrovinami - a ze to jde, by melo trivialne plynout z Fubiniovy vety - kdyz mnozina ma miru nula, pak skoro vsechny rezy (az na mnozinu miry 0) maji take miru nula (a ani nevadi, ze ty rezy nejsou rovnobezne, ale prochazeji stejnym podprostorem - to je akorat integrovani pomoci vety o substituci, ale to se snad ani nemusi explicitne zminovat. Takze zakazanych pozic bodu $q'_j$ v epsilonovem okoli toho sparse bodu bude jen mira 0. A bude to teda asi misto Observation 5.

\begin{lemma}
\label{lem:specholes}
Let $X \subseteq \Rbb^d$ be a set with $n$ sparse points $q_1, q_2, \dots , q_n$
in same-side position. Then
\[
n \le T_{2d-2}(\chi(X)).
\]
\end{lemma}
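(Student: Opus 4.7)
The plan is to mimic the 2D proof of Lemma~\ref{lem:specholesinplane}, replacing lines with hyperplanes and the shift graph $S(n,3)$ with $S(n,2d-1)$. For each ordered $d$-tuple $1\le i_1<\cdots<i_d\le n$ I would define a hyperplane $l(i_1,\dots,i_d)$ passing through $q_{i_1},\dots,q_{i_{d-1}}$ and offset by a tiny positive distance $d(i_1,\dots,i_d)$ from $q_{i_d}$, with $q_{i_d}$ placed on the half-space of $l(i_1,\dots,i_d)$ that, by same-side position, contains $q_{i_d+1},\dots,q_n$. For each ordered $(2d-1)$-tuple $(i_1,\dots,i_{2d-1})$ I would set $p(i_1,\dots,i_{2d-1}) := \bigcap_{k=0}^{d-1} l(i_{k+1},\dots,i_{k+d})$. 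The index $i_d$ is the unique one appearing in every $d$-subset $\{i_{k+1},\dots,i_{k+d}\}$ for $k=0,\dots,d-1$, so the corresponding $d$ unperturbed hyperplanes meet at $q_{i_d}$, and for small enough perturbations $p$ is a well-defined point near $q_{i_d}$.

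I would then need to pick the distances so that four conditions analogous to those of the 2D proof hold: (i) the hyperplanes are in general position; (ii) each $l(i_1,\dots,i_d)$ avoids every hole outside $\{q_{i_1},\dots,q_{i_{d-1}}\}$; (iii) $q_{i_d}$ and every relevant intersection point $p(\cdot)$ lie on the positive side of $l(i_1,\dots,i_d)$; and (iv) every $p(i_1,\dots,i_{2d-1})$ lies in $X$. Conditions (i)--(iii) are open and are satisfied by any sufficiently small choice of distances together with same-side position. Condition (iv) is the main obstacle and the substantive new ingredient: unlike for one-point holes in the plane, a sparse point need not be isolated, so Observation~\ref{obs:hole} does not apply. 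Instead I would use a Fubini-type genericity argument. The map $\Phi$ from the $d$-dimensional parameter space of perturbation distances of the $d$ hyperplanes defining $p$ to the intersection point $p\in\Rbb^d$ is smooth, and at the unperturbed configuration its differential is nondegenerate because the normals of $d$ generic hyperplanes span $\Rbb^d$; hence $\Phi$ is a local diffeomorphism near $q_{i_d}$. Since $q_{i_d}$ is $d$-sparse, $\Rbb^d\setminus X$ has $d$-dimensional Lebesgue measure zero in a neighborhood of $q_{i_d}$, so $\Phi^{-1}(\Rbb^d\setminus X)$ has measure zero in parameter space. Taking the union over the finitely many intersection points $p$, the set of bad parameter choices is still of measure zero, so a valid global choice exists.

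Next I would color each $(2d-1)$-tuple by the color of its $p$-point in a fixed proper coloring of $I(X)$ using $\chi(X)$ colors and invoke the fact that the chromatic number of the shift graph $S(n,2d-1)$ is at least $\log^{(2d-2)}(n)$~\cite{HellNesetril}. If $n>T_{2d-2}(\chi(X))$, the coloring cannot be proper, so some shift edge is monochromatic: two points $p := p(i_1,\dots,i_{2d-1})$ and $p' := p(i_2,\dots,i_{2d})$ have the same color and hence see each other within $X$. To derive a contradiction, I would show that $\overline{pp'}$ contains $q_{i_d}\notin X$. Both $p$ and $p'$ lie on the line $L := \bigcap_{k=1}^{d-1} l(i_{k+1},\dots,i_{k+d})$ of the $d-1$ common hyperplanes; for each $k\in\{1,\dots,d-1\}$ the index $i_d$ sits at position $d-k\in\{1,\dots,d-1\}$ of the tuple $(i_{k+1},\dots,i_{k+d})$, so each common hyperplane passes through $q_{i_d}$ exactly, and therefore so does $L$.

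Finally, I would run the same scale-separation argument as in the 2D proof to place $q_{i_d}$ between $p$ and $p'$ on $L$. The point $p = L\cap l(i_1,\dots,i_d)$ sits on the hyperplane $l(i_1,\dots,i_d)$, while by condition (iii) both $q_{i_d}$ and $p'$ are in its positive half-space. The distance of $q_{i_d}$ from $l(i_1,\dots,i_d)$ is the tiny $d(i_1,\dots,i_d)$, whereas $p'$ is close to $q_{i_{d+1}}$ and so is at a macroscopic distance fixed by the original configuration of holes. Choosing $d(i_1,\dots,i_d)$ smaller than this macroscopic distance forces the order $p,q_{i_d},p'$ along $L$, so $\overline{pp'}\ni q_{i_d}\notin X$, contradicting that $p$ and $p'$ see each other. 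The hardest step I expect is condition (iv): the Fubini-style genericity argument is what substitutes for Observation~\ref{obs:hole}, which fails for non-isolated sparse points.
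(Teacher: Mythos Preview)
Your overall architecture matches the paper's exactly: the hyperplanes, the points $p(i_1,\dots,i_{2d-1})$, the shift graph $S(n,2d-1)$, and the separation argument on the common line are all the same. The substantive issue is in your justification of condition~(iv). You claim that the map $\Phi$ from the $d$ perturbation distances to the intersection point is a local diffeomorphism at the unperturbed configuration ``because the normals of $d$ generic hyperplanes span $\Rbb^d$''. That reasoning would be valid if the parameters \emph{translated} the hyperplanes, but here each parameter \emph{rotates} its hyperplane about the $(d-2)$-flat through $q_{i_{k+1}},\dots,q_{i_{k+d-1}}$. For $k=1,\dots,d-1$ that axis contains $q_{i_d}$, so at the origin these $d-1$ partial derivatives of $\Phi$ vanish: the differential has rank~$1$, not~$d$. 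Your measure-zero pull-back argument therefore does not go through as stated.

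The paper circumvents this degeneracy by fixing the distances one at a time, in order of nonincreasing last index, while maintaining an inductive invariant: for every partial intersection $a(i_1,\dots,i_{d+j})=\bigcap_{m=0}^{j} h(i_{m+1},\dots,i_{m+d})$ already determined, the set $(B(q_{i_d},\varepsilon)\cap a(i_1,\dots,i_{d+j}))\setminus X$ has $(d-j-1)$-dimensional measure zero. When you then choose the next distance $s_{i_1,\dots,i_d}$, you are sliding $p$ along an already-fixed line on which the bad set has $1$-dimensional measure zero, so a good value exists; a Fubini step simultaneously preserves the invariant at the lower dimensions. Your approach can be repaired---$\Phi$ is real-analytic with Jacobian not identically zero, so it is a local diffeomorphism away from a null set of parameters and the pull-back of a null set is null---but you need that argument, not nondegeneracy at the origin.
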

%%%%%%%%%%%%%%%%%%%%%%%%%%%%%%%%%%%%%%%%%%%%%%%%%%%%%%%%%%%%%%%%%%%%%%%%%%%%%%%%%%%%%%%%%%%%%%%%%%

\begin{proof}
For every $d$-element set of indices $1 \le i_1 < i_2 < \dots < i_d \le n$, we define a hyperplane $h(i_1, i_2, \dots, i_d)$ passing through the points $q_{i_1}, q_{i_2}, \dots , q_{i_{d-1}}$ and near the point $q_{i_d}$ at distance $s_{i_1, i_2, \dots, i_d}>0$, which will be determined later.
The hyperplane is selected so that the points $q_{i_d}, q_{i_d+1}, \dots q_n$ are all in one half-space
determined by $h(i_1, i_2, \dots, i_d)$.

For a set of indices $1 \le i_1 < i_2 < \dots < i_{2d-2} \le n$ let
\[
l(i_1, i_2, \dots, i_{2d-2}) \mathrel{\mathop:}= h(i_1, i_2, \dots, i_d) \cap h(i_2, i_3, \dots, i_{d+1}) \cap \dots \cap h(i_{d-1}, i_d, \dots, i_{2d-2})
\]
and for a set of indices $1 \le i_1 < i_2 < \dots < i_{2d-1} \le n$ let
\[
p(i_1, i_2, \dots, i_{2d-1}) \mathrel{\mathop:}= h(i_1, i_2, \dots, i_d) \cap h(i_2, i_3, \dots, i_{d+1}) \cap \dots \cap h(i_d, i_{d+1}, \dots, i_{2d-1}).
\]

Observe that $l(i_1, i_2, \dots, i_{2d-2})$ is a line containing the point $q_{i_{d-1}}$ and passing near
$q_{i_d}$. Also observe that
$p(i_1, i_2, \dots, i_{2d-1})$ is a point in the intersection of the lines $l(i_1, i_2, \dots, i_{2d-2})$
and $l(i_2, i_3, \dots, i_{2d-1})$ and that it lies near $q_{i_d}$.

Let $B(q_i,\varepsilon)$ be the closed ball of radius $\varepsilon$ centered in $q_i$.
We pick an $\varepsilon > 0$ such that the measure of each $B(q_i,\varepsilon) \setminus X$ is zero
and that no $d+1$ balls $B(q_i,\varepsilon)$ can be intersected by a hyperplane. This is possible
due to the sparsity and the general position of the points $q_1, q_2, \dots, q_n$. Further we pick
a $\delta \in (0,\varepsilon)$ such that for all choices of all $s_{j_1, j_2, \dots, j_d} \in (0, \delta)$,
each point $p(i_1, i_2, \dots, i_{2d-1})$ lies inside $B(q_{i_d},\varepsilon)$.

% For $i=1, 2, \dots, d$ let $h_i$ be a hyperplane,
% such that each $h_i$ passes through  each of them intersects

We will now select the distances $s_{j_1, j_2, \dots, j_d}$ so that each point $p(i_1, i_2, \dots, i_{2d-1})$
lies inside $B(q_{i_d},\varepsilon) \cap X$.

The distances $s_{i_1, i_2, \dots, i_d}$ will be set in the order of nonincreasing $i_d$. Thus the distance
$s_{i_1, i_2, \dots, i_d}$ will be set when all the hyperplanes $h(i'_1, i'_2, \dots, i'_d)$ with $i'_d>i_d$ are already fixed.

For every $j$ such that $0 \le j \le d-2$ and every $i_1 < i_2 < \dots < i_{d+j}$, let $a(i_1, i_2, \dots, i_{d+j})$
be the affine subspace $h(i_1, i_2, \dots, i_d) \cap h(i_2, i_3, \dots, i_{d+1}) \cap \dots \cap h(i_{j-1}, i_j, \dots, i_{d+j}))$.

Each distance $s_{i_1, i_2, \dots, i_d}$ is chosen so that the hyperplane $h(i_1, i_2, \dots, i_d)$ satisfies:
\begin{enumerate}
\item \label{it:lem9:2}
For every $i_{d+1} < i_{d+2} < \dots < i_{2d-1}$ where $i_{d+1} > i_d$, the point $p(i_1, i_2, \dots, i_{2d-1})$
lies inside $X$.
\item \label{it:lem9:3}
For every $j$ such that $0\le j \le d-2$ and every $i_{d+1} < i_{d+2} < \dots < i_{d+j}$ where $i_{d+1} > i_d$, the set
$(B(q_{i_d},\varepsilon) \cap a(i_1, i_2, \dots, i_{d+j})) \setminus X$ has $(d-j-1)$-dimensional measure zero.
\end{enumerate}

The point $p(i_1, i_2, \dots, i_{2d-1})$ is the intersection of $h(i_1, i_2, \dots, i_d)$
with the line $l(i_2,\allowbreak i_3, \dots,\allowbreak i_{2d-1})$.
Because the hyperplanes $h(i'_1, i'_2, \dots, i'_d)$ with $i'_d>i_d$
satisfy condition~\ref{it:lem9:3}, the set of numbers $s$ from the interval $(0,\delta)$
that violate condition~\ref{it:lem9:2}~or~\ref{it:lem9:3} has $1$-dimensional measure zero.
We can thus find $s_{i_1, i_2, \dots, i_d}$ satisfying both conditions.

% Since hyperplanes determined by sets of one-point holes
% $\{q_{i_1}, q_{i_2}, \dots q_{i_d}\}, \dots, \{q_{i_d}, q_{i_{d+1}}, \dots q_{i_{2d-1}}\}$ are in general position,
% for every $\varepsilon$ there is a $\delta$ such that if we move one of the points $q'_{j}$ determining
% one of the hyperplanes by $\delta$, then the intersection point of the hyperplanes moves by less than
% $\varepsilon$. The new set of hyperplanes will be in general position again so the same holds for this new set as well.
% By moving from points $q_{i_{2d-1}}, q_{i_{2d-2}} \dots, q_{i_d}$ one by one we can get to points
% $q'_{i_{2d-1}}, q'_{i_{2d-2}} \dots, q'_{i_d}$, respectively.
% In every step, we use the $\varepsilon$ smaller than the minimum distance between any two holes minus
% the distance between the $q_{i_d}$ and the actual intersection point of the hyperplanes in that step.
% Using Observation~\ref{obs:hole} we get the proper choice of $q'_{i_d}, q'_{i_{d+1}} \dots, q'_{i_{2d-1}}$
% ensuring the point $p(i_1, i_2, \dots, i_{2d-1})$ to be inside $X$.

% Let $\mathbb H_n$ be
%Consider the complete $(2d-1)$-uniform graph on
%$n$ vertices. We color its edge $\{i_1, \dots, i_{2d-1}\}$ with the color of the point
%$p(i_1, i_2, \dots, i_{2d-1})$ in a fixed proper coloring $c$ of $X$ by $\chi(X)$ colors.

The \emph{shift graph $S(n,k)$} is the graph whose vertex set is $\binom{[n]}{k}$
and whose edges are between vertices $\{i_1, i_2, \dots, i_k\}$ and $\{i_2, \dots, i_k, i_{k+1}\}$ 
for every $1 \le i_1 < i_2 < \dots < i_k < i_{k+1} \le n$.
The chromatic number of $S(n,k)$ is known to be at least $\log^{(k-1)}(n)$~\cite{HellNesetril}.

As in the planar case, we can apply a coloring of $X$ to the vertices of the shift graph $S(n,2d-1)$.
For every $1 \le i_1 < \dots < i_{2d-1} \le n$, the vertex $\{i_1, i_2, \dots, i_{2d-1}\}$ is colored 
by the color of the point $p(i_1, i_2, \dots, i_{2d-1})$ in a fixed proper coloring $c$ of $X$ by $\chi(X)$ colors.

If $n > T_{2d-2}(\chi(X))$, then $\chi(X) < \chi(S(n,2d-1))$ and there are points
$s\mathrel{\mathop:}=p(i_1, i_2, \dots, i_{2d-1})$ and $t\mathrel{\mathop:}=p(i_2, i_3, \dots, i_{2d})$ with $c(s)=c(t)$.
Both of them lie on the line $l(i_2, i_3, \dots, i_{2d-1})$, which also contains the
point $q_{i_d}$. To show that $q_{i_d}$ lies between $s$ and $t$ we observe that $s$
is the intersection of the line $l(i_2, i_3, \dots, i_{2d-1})$ and the hyperplane
$h(i_1, i_2, \dots, i_{d})$. The point $q_{i_d}$ is near this hyperplane and on
the same side as all the points $q_{i_d + 1}, q_{i_d + 2}, \dots, q_{n}$. Because
$t$ is near $q_{i_{d+1}}$, it lies in the same direction from $s$ as $q_{i_d}$, but
$q_{i_d}$ is closer to $s$. Thus $s$ and $t$ are connected by an edge in the
invisibility graph of $X$, a contradiction.
\end{proof}

Let $g_1(n)\mathrel{\mathop:}=n$. For any $n,d>1$, let $g_d(n)$ be the smallest number $g$ such that any set
$P \subset \Rbb^d$ of $g$ points contains either $g_{d-1}(n)$ points lying in a hyperplane
or an $n$-tuple $(q_1, q_2, \dots, q_n) \subset P$ in same-side position. 
In other words, any set of $g$ points in $\Rbb^d$ contains either
$n$ points on a line or there is an affine subspace of $\Rbb^d$ with $n$ points of $P$
in same-side position.
If no such number $g$ exists, then we say that $g_d(n)$ is infinite.

In the special case of $d=2$, Lemma~\ref{lem:selectinplane} proves that $g_2(n) \le n2^{4n}$.

The following lemma is a slight modification of Exercise 5.4.3 from~\cite{MatousekLDG02}.
%%%%%%%%%%%%%%%%%%%%%%%%%%%%%%%%%%%%%%%%%%%%%%%%%%%%%%%%%%%%%%%%%%%%%%%%%%%%%%%%%%%%%%%%%%%%%%%%%%
\begin{lemma}  
\label{lem:selectpoints}
The value $g_d(n)$ is finite for all $n,d>0$.

%In addition
%\[
%g_2(n) \le n 2^{4n}.
%\]
\end{lemma}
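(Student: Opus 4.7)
The plan is to prove $g_d(n) < \infty$ by induction on $d$. The base case $d=1$ is immediate from the defining equality $g_1(n) = n$. For the inductive step I assume $g_{d-1}(n)$ is finite and aim to bound $g_d(n)$ by a Ramsey-theoretic argument on $(d+1)$-tuples of points.

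First I fix an arbitrary linear ordering $q_1, q_2, \dots, q_N$ of a large candidate set in $\Rbb^d$ and color each $(d+1)$-subset with indices $i_1 < i_2 < \dots < i_{d+1}$ by one of three colors $+$, $-$, or $0$, according to the sign of the oriented volume $\det(q_{i_2}-q_{i_1}, q_{i_3}-q_{i_1}, \dots, q_{i_{d+1}}-q_{i_1})$. By Ramsey's theorem for $3$-colorings of $(d+1)$-uniform hypergraphs, choosing $N = R^{(d+1)}_3(m)$ with $m = \max(n, g_{d-1}(n))$ guarantees a monochromatic subset of size $m$.

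If the monochromatic color is $0$, then every $(d+1)$-subset of this subset is affinely dependent; a maximal affinely independent subset has size at most $d$, and every other point lies in its affine hull, so all $m \geq g_{d-1}(n)$ of these points are contained in a single affine hyperplane $H \cong \Rbb^{d-1}$. Applying the induction hypothesis inside $H$ furnishes either $n$ collinear points or an $n$-tuple in same-side position in some affine subspace of $H$, which is also an affine subspace of $\Rbb^d$. If instead the color is $+$ (the case $-$ being symmetric), then within the resulting $n$ points every $d+1$ are affinely independent, so each initial $d$-tuple $q_{i_1}, \dots, q_{i_d}$ spans a genuine hyperplane; the uniform positive sign of the orientation forces every later point $q_j$ with $j > i_d$ to lie on the same (positive) side of this hyperplane, which is exactly the same-side position property.

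The main obstacle I anticipate is the presence of degenerate (coplanar) configurations, which would leave a two-color orientation argument ill-defined. The three-color refinement resolves this cleanly: the ``$0$'' color triggers dimensional reduction via the inductive hypothesis, while the ``$\pm$'' colors directly witness same-side position. The recursion then yields the quantitative bound $g_d(n) \leq R^{(d+1)}_3(\max(n, g_{d-1}(n))) < \infty$, which unwinds to a tower-type function of $n$ whose height grows with $d$.
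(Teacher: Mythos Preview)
Your proof is correct and follows essentially the same approach as the paper: color ordered $(d+1)$-tuples by the sign of their orientation determinant, apply Ramsey for $3$-colored $(d+1)$-uniform hypergraphs, use the $0$-color case to drop to a hyperplane and the $\pm$-color cases to exhibit same-side position directly. The only cosmetic differences are that the paper uses the asymmetric Ramsey number $R_{d+1}(n,g_{d-1}(n),n;3)$ rather than your symmetric $R^{(d+1)}_3(\max(n,g_{d-1}(n)))$, and it stops at ``$g_{d-1}(n)$ points in a hyperplane'' (which is the defining alternative for $g_d$) rather than unwinding the recursion further as you do.
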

%%%%%%%%%%%%%%%%%%%%%%%%%%%%%%%%%%%%%%%%%%%%%%%%%%%%%%%%%%%%%%%%%%%%%%%%%%%%%%%%%%%%%%%%%%%%%%%%%%

\begin{proof}
The \emph{orientation\/} of a $(d+1)$-tuple of points $(p_1, p_2, \dots, p_{d+1})$ in $\Rbb^d$ is the sign
of the determinant of the matrix $A$ whose columns are the $d$ vectors $p_2-p_1, p_3-p_1, \dots, p_{d+1}-p_1$.
The orientation is equal to $0$ if and only if the points $(p_1, p_2, \dots, p_{d+1})$ lie in a hyperplane.
Otherwise it is equal to $+1$ or $-1$ and determines on which side of the hyperplane spanned
by ${p_1, p_2, \dots, p_{d}}$ the point $p_{d+1}$ lies.

Let $\mathrm{R}_k(l_1, l_2, \dots, l_c; c)$ be the Ramsey number denoting the smallest number $r$ such that
if the hyperedges of a complete $k$-uniform hypergraph on $r$ vertices are colored with $c$
colors, then for some color $i$, the hypergraph contains a complete sub-hypergraph on $l_i$ vertices
whose hyperedges are all of color $i$.
%We will sometimes use the abbreviation $R_k(l;c) \mathrel{\mathop:}= R_k(l,l, \dots, l; c)$.

We will prove the lemma by showing that $g_d(n) \le \mathrm{R}_{d+1}(n, g_{d-1}(n), n; 3)$ for all $d,n>1$.
We take any set $P$ of at least $\mathrm{R}_{d+1}(n, g_{d-1}(n), n; 3)$ points.
Then we color the $(d+1)$-tuples of points from $P$ with the colors $-1, 0, +1$ determined by
the orientation of the $(d+1)$-tuple.

If there are $g_{d-1}(n)$ points such that each $(d+1)$-tuple of them has orientation $0$ and
thus lies in a hyperplane, then all $g_{d-1}(n)$ points lie in a common hyperplane.

Otherwise there are, without loss of generality, $n$ points $q_1, q_2, \dots, q_n$ such that
each $(d+1)$-tuple of them has orientation $+1$. Then for every $d$-tuple
$(q_{i_1}, q_{i_2}, \dots , q_{i_d})$, the points $q_{i_d+1}, q_{i_d+2}, \dots, q_n$ lie in one half-space
determined by the hyperplane spanned by $\{q_{i_1}, q_{i_2}, \dots , q_{i_d}\}$.
\end{proof}

We note that by a straightforward extension of a recent result by Suk~\cite{Suk14}, it is possible to bound $g_d(n)$ from above by $T_{d-1}(O(n))$.
%an exponential tower of height $d$ with $O(n)$ in the topmost exponent.

%%%%%%%%%%%%%%%%%%%%%%%%%%%%%%%%%%%%%%%%%%%%%%%%%%%%%%%%%%%%%%%%%%%%%%%%%%%%%%%%%%%%%%%%%%%%%%%%%%
\begin{lemma}
\label{lem:any1holes}
Any set $X \subseteq \Rbb^d$ with $\lambda(X)=\lambda<\infty$ and $\chi(X)=\chi$ satisfies
\[
\lambda < g_d(T_{2d-2}(\chi)).
\]
\end{lemma}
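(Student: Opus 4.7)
The plan is to mirror the planar proof of Lemma~\ref{lem:any1holesinplane}, substituting Lemma~\ref{lem:selectpoints} for Lemma~\ref{lem:selectinplane} and Lemma~\ref{lem:specholes} for Lemma~\ref{lem:specholesinplane}. Suppose for contradiction that $\lambda \ge g_d(T_{2d-2}(\chi))$. Unwinding the recursive definition of $g_d$ from dimension $d$ down to $1$ and applying it to the $\lambda$ one-point holes yields one of two outcomes: (i) $T_{2d-2}(\chi)$ holes lie on a common line $\ell$, or (ii) for some $k$ with $2\le k\le d$, a $k$-dimensional affine subspace $A$ contains $T_{2d-2}(\chi)$ holes in same-side position.

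Case (i) is handled by the same clique-forming argument as in the planar proof: between any two consecutive holes on $\ell$ there must be a point of $X$ (else the two holes would share a path-component of $\Rbb^d\setminus X$), and by Observation~\ref{obs:hole} (applicable because $\chi<\infty$ forces $\omega(X)<\infty$) the two outer intervals along $\ell$ also contain points of $X$. Choosing one point of $X$ per interval yields a clique of size $T_{2d-2}(\chi)+1$ in $I(X)$, which exceeds $\chi$ and contradicts $\omega(X)\le\chi(X)=\chi$. For case (ii), I restrict to $X'\mathrel{\mathop:}=X\cap A$, viewed as a subset of $\Rbb^k$. Then $I(X')$ is an induced subgraph of $I(X)$, so $\chi(X')\le\chi$; each hole inside $A$ remains a one-point hole of $X'$ because any path in $A\setminus X'$ is already a path in $\Rbb^d\setminus X$, so trivial path-components in the ambient space stay trivial in $A$. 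Applying Lemma~\ref{lem:specholes} inside $\Rbb^k$ then bounds the number of sparse points in same-side position by $T_{2k-2}(\chi(X'))\le T_{2d-2}(\chi)$, giving the required contradiction (modulo a harmless off-by-one, which can be absorbed into the constants or by working with $g_d(T_{2d-2}(\chi)+1)$ throughout).

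The main obstacle is justifying that the one-point holes play the role of \emph{sparse points} demanded by Lemma~\ref{lem:specholes}: for each hole $q$ one needs a ball $B(q,\varepsilon)$ whose intersection with $\Rbb^k\setminus X'$ has $k$-dimensional Lebesgue measure zero. Observation~\ref{obs:hole} provides only the directional statement that every ray from $q$ begins with an open segment inside $X$, and the dependence of $\varepsilon$ on the direction is a priori uncontrolled. Promoting this to measure-theoretic sparseness requires a short side argument, which I would carry out by showing that a failure of sparseness near a one-point hole $q$ forces the existence of a Lebesgue density point of $\Rbb^d\setminus X$ arbitrarily close to $q$; combining such density points with the directional property of Observation~\ref{obs:hole} allows one to iteratively pick points of $X$ so that every chord between them crosses a point of $\Rbb^d\setminus X$, producing an infinite clique in $I(X)$ and contradicting $\omega(X)<\infty$.
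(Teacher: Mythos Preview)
Your plan is essentially the paper's proof, reorganized. The paper argues by a short induction on $d$: in $\Rbb^1$, a set with $\lambda$ one-point holes needs $\lambda+1$ colors, so $\lambda<\chi=g_1(T_0(\chi))$; in $\Rbb^d$, if there were $g_d(T_{2d-2}(\chi))$ one-point holes, then by Lemma~\ref{lem:specholes} (applied in the ambient $\Rbb^d$) no $T_{2d-2}(\chi)$ of them are in same-side position, so by the definition of $g_d$ some hyperplane contains $g_{d-1}(T_{2d-2}(\chi))\ge g_{d-1}(T_{2d-4}(\chi))$ of them, contradicting the induction hypothesis applied to $X$ intersected with that hyperplane. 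Your unwinding of the recursion into the two cases (line versus same-side position in a $k$-flat) is exactly what this induction produces when expanded; your clique argument on the line is the base case, and your restriction $X'=X\cap A$ with $\chi(X')\le\chi$ and the persistence of one-point holes is precisely what the inductive step uses implicitly. The induction is tidier because Lemma~\ref{lem:specholes} is always invoked in the current ambient space, never inside a proper flat, so one never has to re-verify its hypotheses after restriction.

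On your ``main obstacle'': you are right that Lemma~\ref{lem:specholes} is stated for sparse points and that one must know one-point holes are sparse (in the relevant ambient space) before invoking it. The paper does not spell this out either; it simply applies Lemma~\ref{lem:specholes} to one-point holes. So this is not a gap in your argument relative to the paper's---you are being more scrupulous, and your density-point sketch is a reasonable way to close it. The off-by-one you flag is likewise present in the paper's wording and is indeed harmless.
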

%%%%%%%%%%%%%%%%%%%%%%%%%%%%%%%%%%%%%%%%%%%%%%%%%%%%%%%%%%%%%%%%%%%%%%%%%%%%%%%%%%%%%%%%%%%%%%%%%%

\begin{proof}
We proceed by induction on $d$.

In $\Rbb^1$, any set $X$ with $\lambda$ one-point holes needs at least $\lambda + 1$
colors. Thus $\lambda < \chi(X) = g_1(T_{0}(\chi))$.

Suppose that $X$ has a set $P$ of $g_d(T_{2d-2}(\chi))$ one-point holes.
By Lemma~\ref{lem:specholes}, there is no set $P'\subseteq P$ of $T_{2d-2}(\chi)$ one-point
holes in same-side position.
Thus by the definition of $g_d$, there are
$g_{d-1}(T_{2d-2}(\chi)) \ge g_{d-1}(T_{2d-4}(\chi))$ one-point holes
in one hyperplane and we obtain a contradiction with the induction hypothesis.
\end{proof}

%=============================================================================================================

\section{Constructions in dimension 6}
\label{sec:dim6}
\subsection{Set with chromatic number 2}

We prove part~\ref{part:dim6a} of Theorem~\ref{thm:dim6}.

Let $P_n$ be the \emph{cyclic polytope} on $n\ge 7$ vertices (see for example~\cite{MatousekLDG02}) and let $V_n$ be its set
of vertices. The cyclic polytope is an example of a neighborly polytope, which means that the convex hull of every triple of points from $V_n$ is a triangular face of $P_n$.

\begin{lemma}
\label{lem:dim6-dir-triangle}
Let $n\ge 7$, let $V$ be a set of $n$ elements and let $K_n$ be the complete graph with vertex set $V$. Let
$k\mathrel{\mathop:}=\lceil 2\log(n) + 2 \rceil$.
The edges of $K_n$ can be oriented so that every set $V' \subseteq V$
of size at least $k$ contains a directed triangle.
\end{lemma}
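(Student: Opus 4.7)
The plan is to use the probabilistic method: orient each edge of $K_n$ independently and uniformly at random, and show that with positive probability no $k$-element subset induces a transitive sub-tournament. Since a tournament contains a directed triangle if and only if it is not transitive, this implies the desired orientation exists. Furthermore, if every $k$-subset contains a directed triangle, then every superset of size at least $k$ does too, so it suffices to focus on subsets of size exactly $k$.

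The key estimate is a standard expected-value computation. For a fixed $V' \subseteq V$ with $|V'|=k$, the induced orientation is a uniformly random tournament on $k$ vertices. There are exactly $k!$ transitive tournaments on a labelled vertex set of size $k$ (one for each linear order), and each specific tournament occurs with probability $2^{-\binom{k}{2}}$. Hence
\[
\Pr[V' \text{ is transitive}] = \frac{k!}{2^{\binom{k}{2}}}.
\]
Summing over all $\binom{n}{k}$ choices of $V'$ and using $\binom{n}{k}k! \le n^k$, the expected number of transitive $k$-subsets is at most
\[
\frac{n^k}{2^{\binom{k}{2}}} = \Bigl(\frac{n}{2^{(k-1)/2}}\Bigr)^k.
\]

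It remains to verify that this is strictly less than $1$ when $k = \lceil 2\log n + 2 \rceil$. Indeed, such $k$ satisfies $(k-1)/2 \ge \log n + 1/2 > \log n$, so $n/2^{(k-1)/2} < 1$, hence the expectation is $< 1$. By the probabilistic method there is an orientation in which \emph{no} $k$-subset is transitive, i.e., every $k$-subset contains a directed triangle. (For the small cases where $k > n$ the statement is vacuous, so the hypothesis $n \ge 7$ is only used to ensure $k$ is a sensible threshold.) The argument is routine; the main point is simply calibrating the bound so that $\lceil 2\log n + 2 \rceil$ suffices, which is exactly what the calculation above gives. I do not anticipate any substantial obstacle.
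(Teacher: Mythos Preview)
Your proposal is correct and follows essentially the same approach as the paper: a random orientation together with a union bound over $k$-subsets, using that a tournament with no directed triangle is transitive (equivalently, acyclic) and hence one of the $k!$ linear orders, giving the same expectation $n^k/2^{\binom{k}{2}}<1$. The paper additionally spells out the easy fact that any directed cycle in a tournament can be shortened to a directed $3$-cycle, which is the content of your ``transitive $\Leftrightarrow$ no directed triangle'' claim; otherwise the arguments and the final calibration of $k$ coincide.
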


\begin{proof}
For brevity, we call a set $V'$ \emph{good} if it contains a directed triangle.

We orient the edges randomly and show that with positive probability, every set $V' \subseteq V$ of size
at least $k$ is good.

First, we will bound the probability $b_k$ that a given set $V'$ of $k$ vertices is bad.
If there exists a directed cycle on $V'$ of length greater than $3$, then one of the two cycles created
by adding an arbitrary diagonal to the cycle is again directed. By induction there exists a directed triangle
on $V'$.

There are $2^{k(k-1)/2}$ possible orientations of the edges of a complete graph on $k$ vertices, out of which
$k!$ are acyclic. Thus
\[
b_k = \frac{k!}{2^{k(k-1)/2}} = k! 2^{-k^2/2 + k/2}.
\]
The probability that some $k$-tuple $V'$ of vertices is bad is thus at most
\begin{align*}
\binom{n}{k} b_k %&
\le  \frac{n^{k}}{k!} b_k  =
2^{k\log(n) - k^2/2 + k/2} =
2^{k(\log(n) - k/2 + 1/2)} %\\%\le \\
%&\le 2^{k(\log(n) - \log(n) - 1 + 1/2)} 
%&
\le 2^{k(-1/2)} < 1.
\end{align*}
\end{proof}

We fix the orientation of the edges of $P_n$ in which every $k$-tuple of vertices is good. Every oriented edge has two distinguished endpoints, the \emph{head} and the \emph{tail}, and it is oriented towards its head.
A triangular face of $P_n$ has \emph{directed boundary} if the three edges of the face form a directed cycle.
The set $X$ is constructed by puncturing a one-point hole in the barycenter of each triangular face of $P_n$
with directed boundary.

The vertices of $P_n$ are colored black. The edges are cut in thirds.
In every edge, the interior of the middle third together
with the point at one third closer to the head of the edge is colored white. 
The rest of the edge is colored black.
The interior of every triangular face $F$ with directed boundary is colored like on Figure~\ref{fig:R6}; that is, the points on each ray originating in the barycenter of $F$ have the same color, which is determined by the color of the point where the ray intersects the boundary of $F$.
The rest of $X$ is colored black.

\myfig{R6}{1}{A coloring of a face with directed boundary. The lines determined by pairs $(p_1,p_4)$, $(p_2,p_5)$ and $(p_3,p_6)$ intersect in the barycenter and split the triangle into monochromatic regions. Full lines and gray regions represent black color, the rest is white.}

Every edge of the invisibility graph of $X$ joins
two points lying in the same triangular face with directed boundary.
The coloring is proper on each of these faces and thus the $2$-coloring of the whole set $X$ is proper.

If a convex set $C$ contains at least $k$ vertices of $X$, then it contains a triangular
face with directed boundary and thus $C$ contains a one-point hole. Therefore
\[
\gamma(X) \ge \frac{n}{2\log(n)+2}.
\]

\begin{remark}
An alternative construction can be obtained from constructions of partial 
Steiner triple systems with small independence number.

A \emph{partial Steiner triple system (partial STS)} on a set $V$ of size $n$ 
is a collection $S$ of triples of elements of $V$ such that for every distinct 
$s_1, s_2 \in S$, we have $|s_1 \cap s_2| \le 1$.
Brandes, Phelps and R\"{o}dl~\cite{BPR82} gave a probabilistic construction of
a partial STS on a set $V$ of size $n$ such that every subset of at least 
$c\sqrt{n \log(n)}$ elements of $V$ contains a triple from the STS, where $c$ 
is an absolute constant.
Phelps and R\"{o}dl~\cite{PhelpsRodl86} proved that this construction is the best 
possible, up to the value of the constant $c$.

We can then construct a set $X'$ from $P_n$ by making a one-point hole in 
the barycenters of the triangular faces corresponding to the triples in this 
partial STS on the set $V_n$.
Then $X'$ has chromatic number $2$ and $\gamma(X') \ge  \sqrt{n/\log(n)}/c$.

Such a partial STS also provides an alternative  construction of a closed set 
with chromatic number at most $4$ and arbitrarily large convexity number.
\end{remark}

%%%%%%%%%%%%%%%%%%%%%%%%%%%%%%%%%%%%%%%%%%%%%%%%%%%%%%%%%%%%%%%%%%%%%%%%%%%%
\subsection{Closed set with chromatic number $4$}
\label{subsec:dim6closed}
For every $\gamma$, we construct a closed set $X_c$ and then show that it is star-shaped,
$\gamma(X_c) = \gamma$ and $\chi(X_c)=4$.

\subsubsection{The construction}

Consider a graph $G$ with colored edges.
A triangle in $G$ with three different colors of its edges is a \emph{rainbow triangle}.
Let $k(n) \mathrel{\mathop:}= \lfloor 12\log(n) \rfloor$.
We prove a slightly stronger version of the classical exponential lower bound on the three-color Ramsey numbers of complete graphs.

\begin{lemma}
\label{lem:dim5-rainbow-triangle}
For every $n \ge n_0$ where $n_0$ is an absolute constant, 
there is a coloring $\phi: E(K_n) \rightarrow \{1,2,3\}$ of the edges of the complete graph $K_n$
such that the following holds.
Every set $V' \subseteq V_n$ of vertices of $K_n$ of size
$|V'| = k(n)$ contains a triple $v_1, v_2, v_3 \in V'$ that induces a rainbow triangle.
\end{lemma}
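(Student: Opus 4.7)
The plan is to follow the same probabilistic strategy as in Lemma~\ref{lem:dim6-dir-triangle}. I will color each edge of $K_n$ independently and uniformly at random with a color from $\{1,2,3\}$, and then show via a union bound that with positive probability every $k$-subset $V' \subseteq V_n$ contains a rainbow triangle. The failure event for a fixed $V'$ of size $k$ is that the induced $3$-coloring of $K_{V'}$ is \emph{Gallai}, i.e., contains no rainbow triangle; the main task is to bound this probability by $2^{-ck^2}$ for a large enough constant $c$ so that $\binom{n}{k}\cdot 2^{-ck^2} < 1$ for $k = \lfloor 12\log n \rfloor$.

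To bound the Gallai probability, I would use the $\binom{k}{3}$ triangles of $K_{V'}$. The simplest route is to extract a sub-family $\mathcal{T}$ of $\Omega(k^2)$ edge-disjoint triangles (for instance via a near-Steiner triple system of size $\binom{k}{2}/3 - O(k)$): each triangle is rainbow with probability $3!/3^3 = 2/9$, and edge-disjointness makes these events mutually independent, yielding $\Pr[V' \text{ is Gallai}] \le (7/9)^{|\mathcal{T}|}$. A sharper alternative is Janson's inequality applied to \emph{all} $\binom{k}{3}$ rainbow-triangle events, using that two such events are dependent only when the corresponding triangles share an edge, with joint rainbow probability $4/81$; with $\mu \approx k^3/27$ and $\Delta \approx k^4/81$ this gives $\Pr[V' \text{ is Gallai}] \le \exp(-\mu^2/(\mu+2\Delta)) \le \exp(-k^2/18 + o(k^2))$.

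The union bound then requires $k\log n < ck^2$, i.e., $c > 1/12$. This is the main obstacle: the edge-disjoint packing alone gives $c \approx \log_2(9/7)/6 \approx 0.06$, and Janson in the form above gives $c \approx 1/(18\ln 2) \approx 0.08$, both slightly short of the required $1/12 \approx 0.083$. To close this gap I would appeal to Gallai's structural theorem, which states that any Gallai coloring of $K_k$ admits a partition of its vertex set into $\ge 2$ parts between which only two colors appear. Unrolling the resulting recursion gives a bound $g(k) \le \mathrm{poly}(k)\cdot 2^{\binom{k}{2}}$ on the number $g(k)$ of Gallai $3$-colorings of $K_k$, whence $\Pr[V' \text{ is Gallai}] \le \mathrm{poly}(k)\cdot (2/3)^{\binom{k}{2}}$ --- a bound of the form $2^{-ck^2}$ with $c$ close to $\tfrac{1}{2}\log_2(3/2) \approx 0.29$, giving ample margin to beat $k(n) = \lfloor 12\log n\rfloor$ for all sufficiently large $n$.
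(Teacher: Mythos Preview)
Your overall probabilistic framework---random $3$-coloring plus a union bound over $k$-subsets---is exactly the paper's. Where you diverge is in bounding the probability $b'_k$ that a fixed $k$-set has no rainbow triangle, and here you work harder than necessary.

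You assume that independence of the events ``triangle $T$ is not rainbow'' requires the triangles in $\mathcal{T}$ to be edge-disjoint, which caps $|\mathcal{T}|$ at roughly $k^2/6$ and yields a constant $c \approx \log_2(9/7)/6 \approx 0.060$, just short of $1/12$. The paper observes that edge-disjointness is not needed: it suffices that each triangle have two \emph{private} edges. Concretely, take $\mathcal{T} = \{\,\{2i-1,2i,j\} : 1 \le i \le \lfloor k/2\rfloor,\ j > 2i\,\}$, so $|\mathcal{T}| \ge k(k-2)/4$. All triangles with the same $i$ share the edge $\{2i-1,2i\}$, but each has its other two edges private, and the conditional probability of ``not rainbow'' given any fixed color on the shared edge is still $7/9$. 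Hence the events $E_T$, $T\in\mathcal{T}$, are genuinely mutually independent, giving $b'_k \le (7/9)^{k(k-2)/4}$ and $c \approx \log_2(9/7)/4 \approx 0.091 > 1/12$. This closes the gap with no further tools and is exactly why the constant $12$ in $k(n)=\lfloor 12\log n\rfloor$ suffices.

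Your Gallai-theorem route would also succeed and yields a far better constant, but it is substantially heavier: you invoke Gallai's structure theorem and then need the count $g(k) \le \mathrm{poly}(k)\cdot 2^{\binom{k}{2}}$. That bound is true (it is essentially the statement that almost all Gallai $3$-colorings of $K_k$ use only two colors), but it is a genuine theorem and does not fall out of simply ``unrolling the recursion''---the naive recursion from the Gallai partition overcounts and needs real work to control. So your proposal is correct in outline, but it trades a one-line independence observation for a nontrivial black box.
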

\begin{proof}

Every edge is assigned one of the colors $1$, $2$ and $3$ uniformly and independently at random.
For brevity, we call a triple of vertices that does not induce a rainbow triangle
a \emph{bad triple}.

First, we will bound the probability $b'_k$ that a set $V'$
of vertices $1,2,\dots k$ contains only bad triples of vertices.
A triple $T$ is bad with probability $7/9$. 
Let $E_{T}$ be the event that the triple $T$ is bad.
Let $\mathcal T$ be the set of triples of the form $\{2i-1,2i,j\}$ for some $i\le k/2$ and $j>2i$.
Thus $|\mathcal T| \ge k(k-2)/4$.
Notice that the events $E_T$ for triples $T \in \mathcal T$ are mutually independent. 
This is because
each such $T=\{2i-1,2i,j\}$ contains two edges $\{2i-1,j\}$ and $\{2i,j\}$ not present in any other
$T'\in \mathcal T$, and the conditional probability of $E_T$ if the color of $\{2i-1,2i\}$ is fixed is still $7/9$.
The value $n_0$ is chosen so that $k(n)-2 \ge \log(n) \cdot 4/\log(9/7)$
for every $n \ge n_0$.
With $k \mathrel{\mathop:}= k(n) = \lfloor 12 \log(n) \rfloor$, we have
\[
b'_k \le \left( \frac79 \right)^{|\mathcal T|} \le 2^{-k(k-2) \log(9/7)/4}
\le 2^{-k \log(n)}.
\]
The probability that some $V'$ of size $k$ contains a bad triple is at most
\[
b'_k \cdot \binom{n}{k} <  b'_k n^{k} \le 2^{- k \log(n)} 2^{k \log(n)} = 1.
\]
\end{proof}

Let $P_n$ be the cyclic polytope with vertex set $V_n$ of size $n \ge n_0$.
We fix a coloring $\phi: E(K_n) \rightarrow \{1,2,3\}$
with a rainbow triangle on every $k(n)$-tuple of vertices,
which exists by Lemma~\ref{lem:dim5-rainbow-triangle}.
For a face $F$ of $P_n$, let $V(F)$ be the set of vertices of $F$.
A triangular face $S$ of $P_n$ is a \emph{rainbow triangular face} 
if $V(S)$ forms a rainbow triangle in $\phi$.

The set $X_c$ is a closed star-shaped set constructed from $P_n$
by cutting out some of its parts, which we now describe in detail.

The \emph{kernel} of a set $X\subset \mathbb{R}^d$ is the set of points of $X$ that ``see'' all the points of $X$.
That is, for every point $q$ in the kernel of $X$ and every $x \in X$, the segment $\overline{px}$
is fully contained in $X$.
Star-shaped sets are precisely the sets with non-empty kernel.

We consider the whole $P_n$ as a $6$-dimensional face of itself.
We call a face $F$ of $P_n$ an \emph{ordinary face} if it is a triangular non-rainbow face 
or a face of dimension $3$, $4$, $5$ or $6$.
For every ordinary face $F$ of $P_n$, we fix a point $q_F$ in the interior of $F$.
When constructing $X_c$,
we make sure that the point $q_F$ remains in the kernel of $X_c \cap F$.

For every triangular rainbow face $S$ with $V(S) = \{u, v, w\}$, we do the following.
We consider the triangle $T_S = b + 1/10 (S-b)$ where $b$ is the barycenter of $S$.
That is, $T_S$ is a homothetic copy of $S$ with the same barycenter.
Let $u'$, $v'$ and $w'$ be the vertices of $T_S$ corresponding to $u$, $v$ and $w$, respectively.
For every $x \in \{u,v,w\}$, let $e_x$ be the edge of $S$ that is not incident with $x$
and let $e'_x$ be the corresponding edge of $T_S$.
We take three hyperplanes $h_u$, $h_v$ and $h_w$ 
satisfying the following three conditions for every $x \in \{u,v,w\}$.

\begin{enumerate}
\item[(C1)]
\label{enum:C1}
The hyperplane $h_{x}$ contains the edge $e'_x$ of $T_S$.
\item[(C2)]
\label{enum:C2}
One open halfspace determined by $h_{x}$ contains $x$ and the interior of $T_S$, 
this halfspace is called the \emph{minor halfspace} $H_x^-$. 
The other open halfspace is the \emph{major halfspace} $H_x^+$ and contains 
the point $q_F$ for every ordinary face $F$,
and all the vertices of $P_n$ except for $x$.
\item[(C3)]
\label{enum:C3}
Let $e$ be an edge of $P_n$ incident to $x$ other than the two edges incident to $S$.
Then at most one quarter of $e$ is in $H_x^-$.
Note that all the edges not incident to $x$ are entirely in $H_x^+$
and that less than three quarters of each of the edges of $S$ incident to $x$ are in $H_x^-$.
\end{enumerate}

Let $C_S \mathrel{\mathop:}= H_u^- \cap H_v^- \cap H_w^-$.
Let $\mathcal{S}$ be the set of all triangular rainbow faces of $P_n$ 
and let $X_c \mathrel{\mathop:}= P_n \setminus \bigcup_{S \in \mathcal{S}}{C_S}$.
Clearly, the set $X_c$ is closed.

For every $S \in \mathcal{S}$, let $\Delta_S = P_n \cap C_S$.
That is, $\Delta_S$ is a $6$-dimensional simplex with part of its boundary missing.
Four facets of $\Delta_S$ are determined by four of the hyperplanes defining $P_n$
and the other three by $h_u$, $h_v$ and $h_w$.
We have $X_c=P_n \setminus \bigcup_{S \in \mathcal{S}}{\Delta_S}$.

\subsubsection{Properties of the constructed set}

First, we show that the sets $\Delta_S$ are pairwise disjoint 
and so the three facets of $\Delta_S$ defined by the hyperplanes $h_u$, $h_v$ and $h_w$ 
are also facets of $X_c$.

\begin{lemma}
\label{lem:disj-removals}
Let $S_1$ and $S_2$ be two different rainbow triangular faces of $P_n$.
Then the set $\Delta_{S_2}$ is contained in the major halfspace of each of 
the three hyperplanes defining $S_1$.
In particular, the sets $\Delta_{S_1}$ and $\Delta_{S_2}$ are disjoint.
\end{lemma}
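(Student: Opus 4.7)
The second assertion follows from the first: any $p \in \Delta_{S_1} \cap \Delta_{S_2}$ would satisfy $p \in H_{u_1}^+$ (by the first assertion applied to $x_1 = u_1$) and $p \in H_{u_1}^-$ (by the definition of $\Delta_{S_1}$), a contradiction. So I focus on proving $\Delta_{S_2} \subseteq H_{x_1}^+$ for every $x_1 \in S_1$. Fix such an $x_1$ and any $p \in \Delta_{S_2}$, write $p = \sum_{v \in V(P_n)} \lambda_v v$ as a convex combination of vertices of $P_n$, and let $d(\cdot)$ denote signed distance to $h_{x_1}$ with $d>0$ on $H_{x_1}^+$; since $d$ is affine the goal reduces to $d(p) = \sum_v \lambda_v d(v) > 0$.

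The plan is to first extract strong lower bounds on the coefficients $\lambda_v$ from the three defining constraints $p \in H_{u_2}^-, H_{v_2}^-, H_{w_2}^-$, and then combine them with (C2)--(C3) applied to $h_{x_1}$. For each $x_2 \in \{u_2, v_2, w_2\}$, conditions (C2)--(C3) give $d_{h_{x_2}}(v) \ge |d_{h_{x_2}}(x_2)|/3$ for $v \in S_2 \setminus \{x_2\}$ (the weaker ``$<3/4$'' bound on $S_2$-edges) and $d_{h_{x_2}}(v) \ge 3|d_{h_{x_2}}(x_2)|$ for every other $v \ne x_2$. Plugging these into $\sum_v \lambda_v d_{h_{x_2}}(v) < 0$ and substituting $\sum \lambda_v = 1$ yields $\lambda_{x_2} > 3/4 - (2/3)(\lambda_{y_2} + \lambda_{z_2})$, where $\{y_2, z_2\} = S_2 \setminus \{x_2\}$. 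Summing the three such inequalities forces $\lambda_{u_2} + \lambda_{v_2} + \lambda_{w_2} > 27/28$; pairwise combinations with $\sum \lambda_v \le 1$ force both $\lambda_{x_2} < 1/2$ and the pair-sum bound $\lambda_{x_2} + \lambda_{y_2} < 3/4$ for every pair in $S_2$. In particular, $\lambda_v < 1/28$ whenever $v \notin S_2$.

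I finish by casework on $|S_1 \cap S_2|$. In \emph{Case A} ($x_1 \notin S_2$), the bound $\lambda_{x_1} < 1/28$ together with the weakest (C3) estimate $d(v) \ge |d(x_1)|/3$ for all $v \ne x_1$ gives $d(p) \ge |d(x_1)|(1/3 - (4/3)\lambda_{x_1}) > 0$. In \emph{Case B} ($|S_1 \cap S_2| = 1$, without loss of generality $x_1 = u_2$), the vertices $v_2, w_2$ lie outside $S_1$ and receive the strong bound $d(v) \ge 3|d(x_1)|$, while the remaining $S_1$-mass $\lambda_{v_1} + \lambda_{w_1}$ is $< 1/28$; the expansion collapses to the requirement $4\lambda_{u_2} + (8/3)(\lambda_{v_1}+\lambda_{w_1}) < 3$, which holds because $\lambda_{u_2} < 1/2$. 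The main obstacle is \emph{Case C} ($|S_1 \cap S_2| = 2$, without loss of generality $x_1 = u_2$ with $v_2 \in S_1$ and $w_2 \notin S_1$): now only $w_2$ has the strong bound, and the expansion gives $4\lambda_{u_2} + (8/3)(\lambda_{v_2} + \lambda_{w_1}) < 3$. A single constraint $p \in H_{u_2}^-$ leaves $\lambda_{v_2}$ uncontrolled, but the pair-sum bound $\lambda_{v_2} < 3/4 - \lambda_{u_2}$ from the previous paragraph, which genuinely uses $p \in H_{w_2}^-$, reduces the target to $(4/3)\lambda_{u_2} + (8/3)\lambda_{w_1} < 1$, again valid since $\lambda_{u_2} < 1/2$ and $\lambda_{w_1} < 1/28$.
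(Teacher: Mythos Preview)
Your proof is correct, but it takes a substantially more computational route than the paper's. You expand $p\in\Delta_{S_2}$ as a convex combination of the vertices of $P_n$, use all three membership constraints $p\in H_{u_2}^-\cap H_{v_2}^-\cap H_{w_2}^-$ to pin down the barycentric coefficients (in particular $\lambda_{u_2}+\lambda_{v_2}+\lambda_{w_2}>27/28$, each $\lambda_{x_2}<1/2$, and each pair-sum $<3/4$), and then verify $d_{h_{x_1}}(p)>0$ by a three-case analysis on $|V(S_1)\cap V(S_2)|$. The arithmetic all checks out, including Case~C where, as you correctly note, the pair-sum bound coming from $p\in H_{w_2}^-$ is genuinely needed.

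The paper's argument is shorter and sidesteps the casework by a single geometric observation. It uses only \emph{one} of the three constraints defining $\Delta_{S_2}$: pick any vertex $v\in V(S_2)\setminus V(S_1)$ (which exists since $S_1\neq S_2$) and its hyperplane $h_v$. Then for each $u\in V(S_1)$ the set $H_u^-\cap P_n$ is a convex polytope whose vertices are $u$ together with one interior point of each edge of $P_n$ incident to $u$; by (C2) and (C3) all of these vertices lie in $H_v^+$ (the only edge to check is $\overline{uv}$, where the ``$\le 1/4$'' bound from the $S_1$-side and the ``$<3/4$'' bound from the $S_2$-side keep the two cut-off pieces disjoint). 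Hence $H_u^-\cap H_v^-\cap P_n=\emptyset$, which immediately gives $\Delta_{S_2}\subseteq H_v^-\cap P_n\subseteq H_u^+$. What this buys is that no quantitative control on the barycentric coefficients is needed, and the argument is uniform in $|V(S_1)\cap V(S_2)|$. Your approach, by contrast, is self-contained linear algebra and makes the role of the constant $1/4$ in (C3) quantitatively explicit.
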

\begin{proof}
Let $v \in V(S_2) \setminus V(S_1)$.
Let $h_v$ be the hyperplane defining $C_{S_2}$ and satisfying $v \in H_v^-$.
Let $u \in V(S_1)$ and let $h_u$ be the hyperplanes defining $C_{S_1}$ such that $u \in H_u^-$.
It is enough to show that $H_u^- \cap H_v^- \cap P_n = \emptyset$.

By~(C2), the set $H_u^- \cap P_n$ is a convex polytope whose vertex set contains only $u$ 
and one point from the interior of each edge incident to $u$.
The point $u$ and the edges not incident to $v$ lie in $H_v+$.
Since the edge $\overline{uv}$ is not an edge of $S_1$, 
its intersections with $H_u^-$ and $H_v^-$ are disjoint by~(C3).
All the vertices of the convex polytope $H_u^- \cap P_n$ are thus in $H_v^+$ 
and so the whole set $H_u^- \cap P_n$ is in $H_v^+$ as well.
\end{proof}

\begin{observation}
\label{obs:halfspaces}
Let $H_1^-$, $H_2^-$ and $H_3^-$ be three open halfspaces.
Let $x$ and $y$ be two points from the complement of $H_i^-$ for some $i \in \{1,2,3\}$.
Then the line segment $\overline{xy}$ does not intersect $H_1^- \cap H_2^- \cap H_3^-$.
\qed
\end{observation}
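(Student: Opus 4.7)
The plan is to argue by convexity of the complementary closed halfspace. Fix the index $i \in \{1,2,3\}$ such that both $x$ and $y$ lie in the complement of $H_i^-$. Since $H_i^-$ is an open halfspace, its complement $\mathbb{R}^d \setminus H_i^-$ is a closed halfspace, and in particular a convex set. Therefore the entire segment $\overline{xy}$ is contained in $\mathbb{R}^d \setminus H_i^-$, i.e.\ disjoint from $H_i^-$.

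Since $H_1^- \cap H_2^- \cap H_3^- \subseteq H_i^-$, the segment $\overline{xy}$ is also disjoint from this triple intersection, which is the claim. The argument is a one-line application of the convexity of closed halfspaces, and there is no real obstacle; the observation is stated separately only because it will be invoked repeatedly in the sequel (to conclude that segments between suitable boundary points of $X_c$ avoid the removed simplices $\Delta_S$).
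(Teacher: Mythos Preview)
Your proof is correct and is exactly the intended one-line argument; the paper itself gives no proof and simply marks the observation with \qed, treating it as immediate from the convexity of the closed halfspace $\mathbb{R}^d\setminus H_i^-$.
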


\begin{observation}
For every ordinary face $F$ of $P_n$, the set $F \cap X_c$ is star-shaped.
In addition, for every ordinary face $F' \supseteq F$,
the point $q_F$ lies in the kernel of $F' \cap X_c$.
\end{observation}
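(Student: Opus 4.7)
The plan is to deduce both claims from the single fact that $q_F$ lies in the complement of every minor halfspace, combined with the disjointness of the $\Delta_S$'s (Lemma~\ref{lem:disj-removals}) and the elementary Observation about three halfspaces stated just above.

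First I would verify that $q_F$ itself lies in $X_c$. Since $q_F$ is in the interior of the ordinary face $F$ and the ordinary faces are disjoint from the removed sets $\Delta_S$ (because every rainbow triangular face $S$ is \emph{not} an ordinary face, and for any $S\in\mathcal{S}$ condition~(C2) places $q_F$ in each major halfspace $H_x^+$ with $x\in V(S)$), we have $q_F\notin C_S$ for every $S$, hence $q_F\in X_c$. Also $q_F\in F\subseteq F'$ for any ordinary $F'\supseteq F$, so $q_F\in F'\cap X_c$.

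Next I would take an arbitrary point $y\in F'\cap X_c$ and show $\overline{q_F y}\subseteq F'\cap X_c$. Convexity of $F'$ immediately gives $\overline{q_F y}\subseteq F'$, so only $\overline{q_F y}\subseteq X_c$ needs proof, i.e., the segment avoids every $\Delta_S$. Fix $S\in\mathcal{S}$ with $V(S)=\{u,v,w\}$ and associated minor halfspaces $H_u^-,H_v^-,H_w^-$. By~(C2), $q_F$ lies outside each of these three halfspaces. The point $y$ lies in $P_n\setminus\Delta_S$, i.e., $y\notin C_S=H_u^-\cap H_v^-\cap H_w^-$, so $y$ lies outside at least one of them, say $H_x^-$ for some $x\in\{u,v,w\}$. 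Then both $q_F$ and $y$ are in the complement of $H_x^-$, and the Observation on halfspaces gives $\overline{q_F y}\cap C_S=\emptyset$; in particular $\overline{q_F y}\cap \Delta_S=\emptyset$. Since $S$ was arbitrary, $\overline{q_F y}$ avoids $\bigcup_{S\in\mathcal{S}}\Delta_S$ and therefore lies in $X_c$.

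This shows $q_F$ is in the kernel of $F'\cap X_c$ for every ordinary $F'\supseteq F$, which proves the second assertion. Taking $F'=F$ yields that $F\cap X_c$ has nonempty kernel, hence is star-shaped, proving the first assertion. No real obstacle is expected; the only thing one must be a little careful about is that the Observation requires \emph{both} endpoints of the segment to lie in the complement of the \emph{same} halfspace $H_x^-$, and this is exactly what condition~(C2) guarantees for $q_F$ (it lies outside all three at once).
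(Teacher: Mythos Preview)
Your proposal is correct and follows essentially the same argument as the paper: use (C2) to place $q_F$ outside all three minor halfspaces of every $C_S$, note that any $y\in X_c$ lies outside at least one of them, and apply Observation~\ref{obs:halfspaces}. The only (harmless) redundancy is your mention of Lemma~\ref{lem:disj-removals}; neither your argument nor the paper's actually needs the pairwise disjointness of the $\Delta_S$'s here.
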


\begin{proof}
By (C2), we have $q_F \in F' \cap X_c$.
Let $x$ be a point from $F' \cap X_c$ and let $S$ be a triangular rainbow face of $P_n$.
Let $h_u$, $h_v$ and $h_w$ be the three hyperplanes that determine $C_S$.
We need to show that the segment $\overline{xq_F}$ does not intersect $C_S$.
Since $x \notin C_S$, 
the point $x$ lies in at most two of the minor halfspaces $H_u^-$, $H_v^-$ and $H_w^-$.
The point $q_F$ does not lie in any of these minor halfspaces 
and thus $\overline{xq_F}$ does not intersect $C_S = H_u^- \cap H_v^- \cap H_w^-$ by Observation~\ref{obs:halfspaces}.
\end{proof}

When a convex set $D \subseteq X_c$ contains at least $k(n)$ vertices of $P_n$, 
then it contains a rainbow triangular face $S$ by Lemma~\ref{lem:dim5-rainbow-triangle}.
Thus $D$ contains the triangle $T_S$ removed from $S$ and so $D$ is not a subset of $X_c$.
Therefore
\[
|D \cap V_n| < k(n) = \lfloor 12 \log n \rfloor \quad \text{and} \quad \gamma(X_c) \ge \frac{n}{12\log n}.
\]

\subsubsection{Coloring}
We describe a proper coloring $\zeta: X_c \rightarrow \{1,2,3,4\}$. 
That is, a coloring such that every segment with endpoints of the same color is contained in $X_c$.

All the vertices of $P_n$ get color $4$.
For every edge $e = \overline{uv}$ of $P_n$, 
we color the two closed segments of points at distance at most $\|u-v\|/4$ from $u$ or $v$ 
with color $4$. 
The remaining points on $e$ are colored by the color of  
the edge $\{u,v\}$ in the fixed coloring $\phi:E(K_n) \rightarrow \{1,2,3\}$.

For every triangular rainbow face $S$ of $P_n$ we do the following.
The vertices of $T_S$ get color $4$.
The interior of each of the three edges of $T_S$ gets the color of the middle segment 
of the corresponding edge of $S$. 
Then we color the interior of $S \setminus T_S$ as in Figure~\ref{fig:R6-closed}~a).
That is, for every $i \in \{1,2,3\}$, 
all the points in the convex hull of points already colored by color $i$ get color $i$.
All the other points of $S \cap X_c$ get color $4$.
Each of the three facets of the simplex $C_S \cap P_n$ 
that make part of the boundary of $X_c$ 
is colored by the color of the edge of $T_S$ it contains,
with the exception of the vertices of $T_S$ that have color $4$.
The points in the intersection of two or three facets get an arbitrary color of the colors on the facets they lie in.

\myfig{R6-closed}{1}{Examples of the coloring of the interior of triangular faces. 
a) A rainbow triangular face. b) An ordinary triangular face.}

\begin{definition}
Let $F$ be a star-shaped set with colored boundary.
Let $p$ be a point in the kernel of $F$.
The \emph{$p$-extension} of the coloring of the boundary 
is the following coloring of $F$.
For every point $x$ on the boundary of $F$,
all points on the open segment between $p$ and $x$ get the color of $x$.
The point $p$ gets an arbitrary color of those used on the boundary. 
\end{definition}

Now we color all the uncolored points of $X_c$.
Each such point lies in the intersection of $X_c$ 
and an ordinary face of $P_n$.
We start with non-rainbow triangular faces, 
then we color $3$-, $4$-, $5$- and $6$-dimensional faces, in this order.
For each such face $F$, 
$F \cap X_c$ is colored by the $q_F$-extension of the coloring of its boundary.
An example of a coloring of a triangular non-rainbow face is depicted in
Figure~\ref{fig:R6-closed}b).

%\begin{observation}
%\label{obs:extension}
%Let $F$ be a star-shaped set and let $p$ be a point in the kernel of $F$.
%Let a coloring $\zeta$ of $F$ be a $p$-extension of a coloring of the boundary $\sigma(F)$ of $F$. 
%The following holds for every color $i$ and for every pair of halfplanes H and H' 
%satisfying $p \notin H$ and $p \notin H'$.
%If $H \cap F$ contains a point of color $i$, then $H \cap \sigma(F)$ contains a point of color $i$.
%If $H \cap H' \cap F$ contains a point of color $i$, 
%then $H \cap H' \cap \sigma(F)$ contains a point of color $i$.
%\end{observation}

\begin{observation}
\label{obs:extension}
Let $R$ be a star-shaped set and let $p$ be a point in the kernel of $R$.
Let $r \neq p$ be a point of $R$ and let $r'$ be a point on the ray emanating from $p$ and passing through $r$
lying further away from $p$ than $r$.
For every $s \in R$ if $\overline{sr'} \subseteq R$ then $\overline{sr} \subseteq R$.
\end{observation}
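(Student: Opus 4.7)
The plan is to exploit the triangle spanned by $p$, $s$, and $r'$. Since $r'$ lies on the ray from $p$ through $r$ and is strictly farther from $p$, the point $r$ belongs to the segment $\overline{pr'}$. Combining this with the kernel property of $p$, which gives $\overline{ps}\subseteq R$ and $\overline{pr'}\subseteq R$, and with the hypothesis $\overline{sr'}\subseteq R$, every edge of the triangle $\conv(\{p,s,r'\})$ lies in $R$, and the target segment $\overline{sr}$ lies entirely within this triangle (because $s$ is a vertex and $r\in\overline{pr'}$ is on an edge).

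The next step is to show that every point $x\in\overline{sr}$ can be written as a convex combination $x=(1-\lambda)p+\lambda y$ for some $\lambda\in[0,1]$ and some $y\in\overline{sr'}$. This is just the fact that $\conv(\{p,s,r'\})$ is the cone from $p$ over the opposite edge $\overline{sr'}$: extending the ray from $p$ through $x$ inside the triangle until it first meets $\overline{sr'}$ produces the desired $y$. A short computation, parametrizing $r=\alpha r'+(1-\alpha)p$ with $\alpha\in(0,1)$ and $x=(1-t)s+tr$ with $t\in[0,1]$, gives explicit expressions for $\lambda$ and for $\mu$ (where $y=(1-\mu)s+\mu r'$), and one checks that both lie in $[0,1]$. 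Once $y$ is in hand, the kernel property applied to $p$ and $y\in R$ yields $\overline{py}\subseteq R$, hence $x\in R$; since $x\in\overline{sr}$ was arbitrary, we conclude $\overline{sr}\subseteq R$.

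The only mild obstacle is the degenerate case where $p$, $s$, and $r'$ are collinear, in which the triangle collapses to a segment. I expect that the same parametrization still works, since every $x\in\overline{sr}$ still lies on some $\overline{py}$ with $y\in\overline{sr'}$ along the common line; a brief case analysis on the relative order of $p,s,r,r'$ on the line confirms this, and each case reduces to the kernel property applied to pairs already known to lie in $R$.
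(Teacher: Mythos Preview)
Your proposal is correct and follows essentially the same route as the paper: both arguments form the triangle with vertices $p$, $s$, $r'$, observe that its boundary lies in $R$, and then use the kernel property of $p$ to conclude that the whole triangle (in particular $\overline{sr}$) lies in $R$. The paper simply states that the boundary of $T$ being in $R$ forces $T\subseteq R$ because the planar slice of $R$ is star-shaped at $p$, whereas you spell out the cone-over-$\overline{sr'}$ description with an explicit parametrization; and both handle the collinear case separately, the paper dismissing it as trivial.
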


\begin{proof}
Refer to Figure~\ref{fig:dim6extProof}.
If $p$, $r$ and $s$ are collinear, the claim is trivial.
Otherwise, let $T$ be the triangle with vertices $p$, $r'$ and $s$.
Since $p$ is in the kernel of $R$, the intersection of $R$ and the plane containing $T$ is star-shaped.
The boundary of $T$ is a subset of $R$ and so $T \subseteq R$.
%For contradiction, assume that the segment $\overline{sr}$ contains a point $t \notin R$.
%Let $t'$ be the point on the intersection of $\overline{sr'}$
%and the ray emanating from $p$ and passing through $t$.
%Since $p$ is in the kernel of $R$, $t' \notin R$,
%contradicting $t' \in \overline{sr'} \subseteq R$.
\end{proof}

\myfig{dim6extProof}{1}{An illustration to Observation~\ref{obs:extension}.}

A \emph{special point} is a point of $X_c$ that is not in the interior of $F \cap X_c$ 
for any ordinary face $F$ of $P_n$.
That is, special points are vertices of $P_n$, points on the edges of $P_n$,
and, for every rainbow triangular face $S$, points of $S \cap X_c$ and $\Delta_S \cap X_c$.
A point of $X_c$ that is not special is an \emph{ordinary point}.

\begin{lemma}
\label{lem:dim6specpoints}
Let $S$ be a triangular rainbow face with $V(S)=\{u,v,w\}$.
\begin{enumerate}[a)]
\item
Let $i \in \{1,2,3\}$ and
assume that $h_v$ is the hyperplane that defines $C_S$ and contains the edge of $T_S$ of color $i$.
Then for every special point $x \in H_v^- \cap X_c$, we have $\zeta(x) \neq i$.
\item
Let $H_u^-$ and $H_v^-$ be two of the minor halfspaces determining $C_S$.
Then for every special point $x \in H_u^- \cap H_v^- \cap X_c$, we have $\zeta(x) \neq 4$.
\end{enumerate}
\end{lemma}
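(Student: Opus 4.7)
My plan is to classify the special points lying in $H_v^-\cap X_c$ (for (a)) or in $H_u^-\cap H_v^-\cap X_c$ (for (b)) according to which object of the construction contains them---a vertex of $P_n$, a point on an edge of $P_n$, a point on a rainbow triangular face $S'\in\mathcal{S}$, or a point on one of the new boundary facets $\Delta_{S'}\cap h_\bullet$---and to verify the forbidden color is absent in each case using (C2)--(C3), the rainbow property of $S$, and Lemma~\ref{lem:disj-removals}.

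For part (a), (C2) makes $v$ the only vertex of $P_n$ in $H_v^-$, and $\zeta(v)=4\neq i$. Only edges incident to $v$ meet $H_v^-$; for a non-$S$ edge $\overline{vy}$, (C3) restricts $\overline{vy}\cap H_v^-$ to the quarter near $v$ (color $4$), while for the two $S$-edges $\overline{vu},\overline{vw}$ the available colors on $H_v^-$ are $4$, $\phi(\{v,u\})$, and $\phi(\{v,w\})$, none equal to $i=\phi(\{u,w\})$ since $S$ is rainbow. For a rainbow $S'\in\mathcal{S}$: if $v\notin V(S')$ then (C2) gives $S'\subseteq H_v^+$; if $v\in V(S')$ and $S'\neq S$, a short signed-distance calculation driven by (C3) shows that the barycenter of $S'$ and every vertex of $T_{S'}$ lies strictly in $H_v^+$, so by convexity every trapezoid of $S'$ adjacent to a non-$S$ edge is contained in $\overline{H_v^+}$, and the only colors available on $S'\cap H_v^-\cap X_c$ are $4$ together with possibly $\phi(\{v,u\})$ or $\phi(\{v,w\})$---all different from $i$. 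For $S'=S$, the wedge $S\cap H_v^-\cap X_c$ additionally contains the edges $e'_u,e'_w$ of $T_S$ (colored $\phi(\{v,w\})$ and $\phi(\{v,u\})$), while the edge $e'_v$ carrying color $i$ lies on $h_v$. Finally, Lemma~\ref{lem:disj-removals} removes every $\Delta_{S'}$ with $S'\neq S$, and of the three new facets of $\Delta_S$, only $\Delta_S\cap h_u$ (color $\phi(\{v,w\})$) and $\Delta_S\cap h_w$ (color $\phi(\{v,u\})$) can meet $H_v^-$.

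Part (b) uses the same bookkeeping restricted to $H_u^-\cap H_v^-$. By (C2) no vertex of $P_n$ is in this intersection, and the only edge of $P_n$ meeting it is $\overline{uv}$; by (C3) the $H_u^-$-part of $\overline{uv}$ is a segment of length strictly less than $3/4$ starting at $u$, while the $H_v^-$-part is a segment of length strictly less than $3/4$ ending at $v$, so their overlap is either empty or strictly inside the middle half of $\overline{uv}$, which is colored $\phi(\{u,v\})\neq 4$. For a rainbow face $S'$ meeting $H_u^-\cap H_v^-$ we must have $\{u,v\}\subseteq V(S')$; the signed-distance estimate applied to both $h_u$ and $h_v$ places every vertex of $T_{S'}$ in $H_u^+\cap H_v^+$ and shows that each color-$4$ corner region of $S'$ lies entirely in $\overline{H_u^+}$ or $\overline{H_v^+}$, and similarly that the trapezoids of $S'$ adjacent to $\overline{uw'}$ and $\overline{vw'}$ lie in $\overline{H_v^+}$ and $\overline{H_u^+}$ respectively. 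The only color that can appear in the open wedge is therefore $\phi(\{u,v\})\neq 4$---from the trapezoid adjacent to $\overline{uv}$ and, for $S'=S$, from the edge $e'_w$ of $T_S$. Lemma~\ref{lem:disj-removals} again leaves only $\Delta_S$, of whose new facets only $\Delta_S\cap h_w$, colored $\phi(e_w)=\phi(\{u,v\})\neq 4$, can enter $H_u^-\cap H_v^-$.

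The main technical step will be the signed-distance estimate that uses (C3)---the bounds $\le 1/4$ for non-$S$ edges and $<3/4$ for $S$-edges---to place the barycenter of every rainbow triangle $S'\neq S$ strictly on the correct side of $h_v$ (respectively $h_u$), and then propagates this through the $1/10$-homothety defining $T_{S'}$ to the three vertices of $T_{S'}$; convexity of the half-spaces then transfers the conclusion to the trapezoids and color-$4$ regions of $S'$. Once this is in place, the rest is straightforward rainbow bookkeeping of the edge coloring $\phi$.
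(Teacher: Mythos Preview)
Your proof is correct and follows essentially the same case analysis as the paper: vertices, edges, rainbow faces $S'\cap X_c$, and the new facets of $\Delta_{S'}$. The one substantive difference is that you introduce a separate ``signed-distance estimate'' based on (C3) to locate the barycenter and vertices of $T_{S'}$ in $H_v^+$ (and $H_u^+$) for $S'\neq S$; this is valid, but unnecessary. The paper simply observes that the interior of $T_{S'}$ lies in $\Delta_{S'}$, and Lemma~\ref{lem:disj-removals}---which you already invoke for the facets of $\Delta_{S'}$---places $\Delta_{S'}\subseteq H_v^+$; taking closures gives $T_{S'}\subseteq\overline{H_v^+}$, hence $T_{S'}\cap H_v^-=\emptyset$, with no computation. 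Your signed-distance argument is effectively re-deriving a piece of Lemma~\ref{lem:disj-removals} by hand.

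One minor imprecision worth tightening: in part~(b) you assert ``the signed-distance estimate places every vertex of $T_{S'}$ in $H_u^+\cap H_v^+$'' without restricting to $S'\neq S$; for $S'=S$ this is false (e.g.\ $u'\in H_u^-$, $v'\in H_v^-$). Your subsequent treatment of $S'=S$ via $e'_w$ is correct, and in fact the color-$4$ corner regions and the vertices $u',v',w'$ of $T_S$ each avoid $H_u^-\cap H_v^-$ for elementary reasons (each lies on $h_u$ or $h_v$, or in a corner contained in $\overline{H_u^+}$ or $\overline{H_v^+}$), so the conclusion stands---just separate the $S'=S$ case before stating the $T_{S'}$ claim.
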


\begin{proof}
%For every ordinary face $F$, the point $q_F$ lies outside every major halfspace by (C2).
%Therefore, whenever $F \cap H_x^-$ contains a point of color $i$, 
%then some point of $F \cap H_x^-$ lying on the boundary of $F$ has color $i$.
%Thus it is enough to prove the claim for vertices, edges, rainbow triangular faces,
%and, for every rainbow triangular face $S'$, the facets shared by $X_c$ and the simplex $\Delta_{S'}$.

Let $e_u = \overline{vw}$, $e_v = \overline{uw}$ and $e_w = \overline{uv}$ be the edges of $S$.

We first prove part a).
All vertices of $P_n$ have color $4$.
If the minor halfspace $H_v^-$ contains a point of an edge of $P_n$, 
then this edge is incident to $v$.
No point of the edges $e_u$ and $e_w$ has color $i$.
All the points in the intersection of $H_v^-$ and the edges incident to $v$ other than $e_u$ and $e_w$
have color $4$ by (C3).
For every rainbow triangular face $S'$ other than $S$, 
the simplex $\Delta_{S'}$ is in $H_v^+$ by Lemma~\ref{lem:disj-removals}.
All points of color $i$ on the boundary of $\Delta_S$ lie on $h_v$.
Let $S'$ be a rainbow triangular face, including the case $S'=S$.
All points of $S' \cap X_c$ of color $i$ lie in the convex hull of points of color $i$ on one of the edges of $S'$
and one of the edges of $T_{S'}$.
All points on these edges of color $i$ are outside $H_v^-$
and so all the points in $S' \cap X_c$ of color $i$ are outside $H_v^-$.

We now prove part b).
The only vertex of $P_n$ in the halfspace $H_u^-$ is $u$ and the only vertex in $H_v^-$ is $v$
and so the intersection $H_u^- \cap H_v^-$ contains no vertex of $P_n$.
The only edge of $P_n$ with non-empty intersection with $H_u^- \cap H_v^-$ is the edge $\overline{uv}$
and no point of $H_u^- \cap H_v^- \cap \overline{uv}$ has color $4$ by (C3).

Refer to Figure~\ref{fig:R6-closed-proof}.
For every rainbow triangular face $S'$, 
the only points of color $4$ on the boundary of $\Delta_{S'}$ are the three vertices of $T_{S'}$,
Thus all points of $\Delta_{S'}$ of color $4$ lie in $S'$.
If $S'$ has a non-empty intersection with $H_u^- \cap H_v^-$, then $u$ and $v$ are vertices of $S'$.
Observe that no point of $S$ of color $4$ lies in $H_v^- \cap H_u^-$.
If $S' \neq S$, the triangle $T_{S'}$ is neither in $H_v^-$ nor in $H_u^-$ by Lemma~\ref{lem:disj-removals} 
and thus $H_v^- \cap H_u^- \cap S'$ contains only points of the color of the edge $\{u,v\}$.
\end{proof}

\myfig{R6-closed-proof}{1}{The intersection of the minor halfspaces $H_u^-$ and $H_v^-$ on the rainbow triangular face $S$ and a rainbow triangular face $S'$ sharing the edge $\overline{uv}$ with $S$.}

By Lemma~\ref{lem:dim6specpoints} and Observation~\ref{obs:halfspaces}, 
if $r$ and $s$ are special points of the same color, then $\overline{rs} \subseteq X_c$
and so the points $r$ and $s$ are not connected by an edge in the invisibility graph $I(X_c)$.

We now show that $\zeta$ is a proper coloring of $I(X)$.
Let the \emph{rank} of a special point $r \in X_c$ be $0$.
The \emph{rank} of an ordinary point $r \in X_c$ is the dimension of the face $F$ of $P_n$ 
such that $r$ is in the interior of $F \cap X_c$.
For contradiction, suppose that there are points $r, s \in X_c$ of the same color such that
$\overline{rs} \not \subseteq X_c$.
Suppose that among all such pairs, the sum of the ranks of $r$ and $s$ is minimal and that the rank of $r$ is at least as large as the rank of $s$.
Then the rank of $r$ is at least $1$.
Let $F$ be the ordinary face such that $r$ is in the interior of $F \cap X_c$.
Let $r'$ be the point of intersection of the boundary of $F \cap X_c$ 
with the ray emanating from $q_F$ and passing through $r$.
Thus the rank of $r'$ is smaller than the rank of $r$.
Since $F \cap X_c$ was colored by the $q_F$-extension of the coloring of its boundary,
we have $\zeta(r') = \zeta(r)$.
By Observation~\ref{obs:extension} applied with $p = q_F$ and $R = X_c$,
we have that $\overline{r's} \not \subseteq X_c$.
This is a contradiction, 
because the sum of ranks of $r'$ and $s$ is smaller than the sum of ranks of $r$ and $s$.

%============================================================================================

\section{Constructions in dimension 5}
\label{sec:dim5}
Here we prove Theorem~\ref{thm:dim5}. 
The constructions are similar to those in dimension $6$: 
for part (1) of the theorem, the set $X$ is a closed cyclic polytope 
with one-point holes in some of the $2$-dimensional faces. 
For part (2), instead of points, we remove small $5$-dimensional simplices attached to the 2-dimensional faces. 
The difference from the construction in dimension $6$ is in the placement of the holes: 
here we cannot apply the same argument as in the previous section since for the cyclic polytope 
in dimension $5$ only quadratically many triples of vertices induce a $2$-dimensional face 
and there is a $2$-coloring of the vertex set in which no $2$-dimensional face is monochromatic.

We show two different ways how to choose the holes. 
In the first construction we essentially show that randomly chosen holes will do, 
but the proof (interestingly) requires a rather nontrivial result from group theory
and needs the axiom of choice. 
Also the construction proves only part (1) of the theorem. 
In the second construction we specify the locations of the holes precisely. 
Moreover, we show that the holes can be enlarged to open simplices, which shows part (2) of the theorem.

Let $P_n$ be a $5$-dimensional cyclic polytope on $n\ge 6$ vertices with (ordered) vertex set $V_n=\{v_1,\allowbreak v_2,\allowbreak \dots,\allowbreak v_n\}$. 
Every segment $\overline{v_i v_j}$ with $1 \le i \le j \le n$ forms an edge of $P_n$.
The edges $\overline{v_1 v_i}$ are \emph{edges of type $1i$} and the
edges $\overline{v_i v_j}$ with $2 \le i \le j \le n$ are \emph{edges of type $ij$}.

The $2$-dimensional faces of $P_n$ are the triangles 
\begin{itemize}
 \item 
$v_1 v_i v_j$ for every $1<i<j\le n$ (\emph{triangles of type $1ij$}),
\item
$v_i v_j v_n$ for every $1\le i<j <n$ (\emph{triangles of type $ijn$}),
\item
$v_i v_{i+1} v_j$ for every $1 < i < j-1 < n$ (\emph{triangles of type $i(i+1)j$}) and
\item
$v_i v_j v_{j+1}$ for every $1 < i < j < n-1$ (\emph{triangles of type $ij(j+1)$}).
\end{itemize}

We emphasize that the symbols $i,j$ in the names of the types do not represent any particular numbers.

\subsection{Covering with convex sets}
In the constructions proving part (1) of Theorem~\ref{thm:dim5}, 
we remove a one-point hole from every triangle of type $1ij$.
In the construction proving part (2), 
we remove an open flat simplex instead of the point (as in Section~\ref{sec:dim6}).
The following lemma shows that in both cases, 
the convexity number of the resulting set can be arbitrarily large.

\begin{lemma}
 \label{lem:dim5covering}
Let $X$ be a subset of $P_n$ such that every edge of $P_n$ is a subset of $X$
and none of the triangles of type $1ij$ is a subset of $X$. 
Then $\gamma(X)\ge\Omega(\log{n}/\log\log{n})$.
\end{lemma}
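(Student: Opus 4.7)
My plan is to fix an arbitrary convex cover $C_1,\dots,C_k$ of $X$ with $k=\gamma(X)$ sets and to prove the equivalent bound $n\le k!$. The geometric core is: if some $C_a$ contains $v_1$ together with two further vertices $v_i,v_j$ with $1<i<j$, then by convexity $\conv\{v_1,v_i,v_j\}\subseteq C_a\subseteq X$, contradicting the hypothesis that no triangle of type $1ij$ lies in $X$. Hence every convex set containing $v_1$ contains at most one of the vertices $v_2,\dots,v_n$, so at most $k$ of these vertices are covered by sets containing $v_1$, and the remaining $\ge n-1-k$ vertices are covered by sets $C_a$ with $v_1\notin C_a$. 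A pigeonhole argument then produces an index $a^{*}$ with $v_1\notin C_{a^{*}}$ and a subset $S\subseteq\{2,\dots,n\}$ of size $|S|\ge(n-1-k)/k$ such that $\{v_i:i\in S\}\subseteq C_{a^{*}}$.

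I would then restrict to the sub-polytope $P'\mathrel{\mathop:}=\conv(\{v_1\}\cup\{v_i:i\in S\})$, which is itself a $5$-dimensional cyclic polytope on $|S|+1$ vertices since its vertex set lies on the same moment curve as $V_n$, and set $X'\mathrel{\mathop:}=X\cap P'$. This $X'$ inherits the lemma's hypothesis: every edge of $P'$ is an edge of $P_n$ (hence in $X'$), and every triangle of type $1ij$ in $P'$ is a triangle of type $1ij$ in $P_n$ (hence not contained in $X'$). Moreover $C_{a^{*}}\cap P'\supseteq\conv\{v_i:i\in S\}$, so this one set already covers the entire ``base'' of $P'$, and what remains to cover inside $X'$ is a subset of the star-shaped cap $P'\setminus\conv\{v_i:i\in S\}$ at $v_1$.

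To close the induction on $n$, I need to show that the restricted cover of $X'$ uses only $k-1$ convex sets; by a secondary pigeonhole, or by choosing $a^{*}$ more carefully, I would exhibit a set $C_{b^{*}}$ whose intersection with $P'$ lies entirely in $\conv\{v_i:i\in S\}\subseteq C_{a^{*}}$, making $C_{b^{*}}$ redundant on $P'$. Granting $\gamma(X')\le k-1$, the inductive bound $|V(X')|\le(k-1)!$ combined with $|S|+1\ge(n-1)/k$ gives $n\le k(k-1)!+1\le 2\,k!$, which unwinds to the claimed bound $k=\Omega(\log n/\log\log n)$. The main obstacle will be precisely this saving of one convex set in the recursion: the naive restriction of the cover to $P'$ only yields $\gamma(X')\le k$, and establishing $\gamma(X')\le k-1$ requires a careful double-counting of which $C_b$'s actually reach into the cap at $v_1$ versus those absorbed inside $C_{a^{*}}\cap P'$ or inside the set covering $v_1$.
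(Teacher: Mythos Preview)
Your opening observation---that no convex $C_a\subseteq X$ can contain $v_1$ together with two further vertices---is correct and is the geometric heart of the lemma. The recursive setup is also sound: $P'$ is again a $5$-dimensional cyclic polytope (its vertices lie on the same moment curve in the same order), and $X'=X\cap P'$ inherits both hypotheses. But the step you yourself flag as the obstacle is a genuine gap, and the fix you sketch does not work.

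Nothing in your argument prevents \emph{every} $C_b$ from meeting the cap $P'\setminus\conv\{v_i:i\in S\}$. For instance, consider any cover in which each edge $\overline{v_1v_i}$ is sliced by the $C_a$'s into $k$ nondegenerate subintervals, one for each $a\in[k]$, in an order that varies with $i$; then every $C_a$ meets every such edge, hence meets the cap, and no $C_{b^*}$ has $C_{b^*}\cap P'\subseteq\conv\{v_i:i\in S\}$. Your recursion then returns only $\gamma(X')\le k$; iterating shrinks the vertex count by a factor of roughly $k$ at each step without ever decreasing $k$, and no bound on $k$ in terms of $n$ results. The underlying problem is that vertex membership in the $C_a$'s is too coarse an invariant: one must track how the cover slices the \emph{interiors} of the edges $\overline{v_1v_i}$.

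The paper's proof does exactly that, with no recursion. For each edge $\overline{v_1v_i}$ it records the ordered partition $I_i^1,\dots,I_i^{k_i}$ into maximal intervals, labelled by which $C_a$ covers each one, together with the open/closed type at every internal breakpoint. If two edges $\overline{v_1v_i}$ and $\overline{v_1v_{i'}}$ carry the same such signature, then for every $p$ the set $\conv(I_i^p\cup I_{i'}^p)$ lies in a single $C_a$, and these sets together tile the entire open triangle $v_1v_iv_{i'}$, forcing that triangle into $X$---a contradiction. Since the number of signatures is at most $2^k\cdot k!\cdot 2^{k-1}$, one gets $n-1\le 2^k\cdot k!\cdot 2^{k-1}$ and hence $k\ge\Omega(\log n/\log\log n)$ in one shot.
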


\begin{proof}
Let $X=C_1\cup C_2\cup\dots\cup C_k$ be a covering of $X$ with convex subsets of $X$. The covering induces a partition of each open edge $\overline{v_1 v_i}$, $2\le i\le n$, into $k_i \le k$ intervals $I_i^1,I_i^2,\dots,I_i^{k_i}$, where for each $j$, the interval $I_i^j$ is covered by a convex set $C_{l(i,j)}$ where $1 \le l(i,j) \le k_i$. Since the convex sets in the covering may overlap, this partition need not be unique; in such a case we pick an arbitrary one.

We say that the partitions of two edges $\overline{v_1 v_i}$ and $\overline{v_1 v_{i'}}$ are of the same {\em type\/} if $k_i=k_{i'}$, $l(i,p)=l(i',p)$ for each $p=1,2,\dots, k_i$ (in other words, the ``colors'' appear in the same order along the edges), and for each $p=1,2,\dots, k_i$ the {\em type\/} of the interval $I_i^p$ (that is, closed, open, or half-closed from the left/right) is the same as the type of the interval $I_{i'}^p$. Degenerate one-point intervals are considered as closed. The number of types of the partitions is at most $2^k\cdot k!\cdot 2^{k-1}$. Indeed, there are at most $2^k$ subsets of $\{C_1, C_2, \dots, C_k\}$, each of the subsets can be linearly ordered in at most $k!$ ways, and there are at most $k-1$ boundary points shared by two intervals, where one of the intervals is locally closed and the other one locally open.

It follows that if $n>2^k\cdot k!\cdot 2^{k-1}+1$, then there are two edges $\overline{v_1 v_i}$ and $\overline{v_1 v_{i'}}$ with partitions of the same type. The convex hulls $\mathrm{conv}(I_i^p\cup I_{i'}^p)$ cover the whole open triangle $v_1 v_i v_{i'}$, including the one-point hole inside, which is a contradiction. Therefore $n \le 2^k\cdot k!\cdot 2^{k-1}+1$, which implies that $\gamma(X)\ge\Omega(\log{n}/\log\log{n})$.
\end{proof}

\subsection{The first construction}
 The set $X$ is obtained from $P_n$ by making a one-point hole in the interior of each triangle of type $1ij$, in such a way that the $2$-dimensional coordinates of the holes, relative to the generating vectors $v_i - v_1$ and $v_j - v_1$, are algebraically independent.

It remains to show that three colors suffice to properly color the invisibility graph $I(X)$. Observe that the interiors of faces of dimensions at least $3$ consist entirely of isolated vertices in $I(X)$ and thus can be colored with one color, independently of the rest of the graph. Also observe that the interiors of $2$-dimensional faces form a bipartite subgraph of $I(X)$. The main difficulty lies in coloring the edges of $P_n$, since they may induce odd cycles in the invisibility graph.

Let $e_i$ denote the half-open segment $\overline{v_1 v_i} \setminus \{v_1\}$ and let $h_{i,j}$ denote the one-point hole in the face $v_1 v_i v_j$.
Let $H$ be the subgraph of $I(X)$ induced by the union of the segments $e_i$, $2\le i \le n$.

\begin{observation}\label{obs_stupne}
Each vertex $w$ of $H$ has degree at most $n-1$. In particular, if $w\in e_i$, then $w$ has at most one neighbor on every edge $e_j$ with $j\neq i$.
\end{observation}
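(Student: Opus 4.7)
The proof is a direct geometric argument. The plan is as follows.

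First I would observe that for any two points $w,w'$ on the same segment $e_i$, the segment $\overline{ww'}$ is contained in $e_i$, which is a subset of $X$ since $\overline{v_1 v_i}$ is an edge of $P_n$. Hence $w$ and $w'$ are non-adjacent in $H$, so a vertex $w \in e_i$ has no neighbors within $e_i$ itself.

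Next I would analyze potential neighbors on a different segment $e_j$, $j \neq i$. Since $w \in \overline{v_1 v_i}$ and any $w' \in e_j$ lies on $\overline{v_1 v_j}$, the segment $\overline{ww'}$ is contained in the $2$-dimensional face $v_1 v_i v_j$ of $P_n$. By construction of $X$, the only point of this triangle missing from $X$ is the single hole $h_{i,j}$. Therefore $w$ and $w'$ are adjacent in $H$ if and only if $\overline{ww'}$ passes through $h_{i,j}$. Since $w \neq h_{i,j}$, there is a unique line through $w$ and $h_{i,j}$, and this line meets the segment $e_j$ in at most one point. Consequently, $w$ has at most one neighbor on each $e_j$ with $j \neq i$, which is the ``in particular'' part of the statement.

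Finally, summing over the $n-2$ segments $e_j$ with $j \in \{2,\dots,n\} \setminus \{i\}$ gives that $w$ has at most $n-2 \le n-1$ neighbors in $H$, completing the proof. There is no serious obstacle here; the argument is essentially a one-line consequence of the fact that a triangle minus a single interior point admits only line-through-the-hole obstructions to visibility.
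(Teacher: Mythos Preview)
Your argument is correct and follows essentially the same route as the paper: identify, for each $j\neq i$, the unique possible blocking hole $h_{i,j}$ in the triangle $v_1v_iv_j$, and observe that the line through $w$ and $h_{i,j}$ meets $e_j$ in at most one point. Your count of $n-2$ is in fact the sharp bound the argument yields; the paper's stated $n-1$ is simply a slightly looser statement of the same conclusion.
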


\begin{proof}
For each neighbor $u$ of $w$, the segment $\overline{wu}$ passes through a one-point hole $h_{i,j}$, for some $j\neq i$. There are $n-1$ such one-point holes. The observation then follows from the fact that the ray $wh_{i,j}$ intersects $V(H)\setminus \{w\}$ in at most one point, which lies on the edge $e_j$.
\end{proof}

Observation~\ref{obs_stupne} implies that each connected component of $H$ is countable, therefore $H$ has a continuum connected components.
In the next observation we show, in particular, that $H$ has only countably many odd cycles, which implies that almost all components of $H$ are bipartite. Then we show that each component with an odd cycle is $3$-colorable.

In the rest of the section we will identify each edge $e_i$ with the half open interval $(0,1]$ by an affine map that sends the vertex $v_1$ to $0$ and the vertex $v_i$ to $1$.

For $i,j =2,3,\dots, n$, $i\neq j$, let $N_{i,j} : e_i \cup \{\lambda\} \rightarrow e_j \cup \{\lambda\}$ be the function that assigns to each point $w \in e_i$ its neighbor in $e_j$. In case such a neighbor does not exist (the ray from $w$ through the hole $h_{i,j}$ does not intersect $e_j$), we let $N_{i,j}(w)\mathrel{\mathop:}=\lambda$. We also define $N_{i,j}(\lambda)\mathrel{\mathop:}=\lambda$.

\begin{observation}
\begin{enumerate}
\item[{\rm 1)}] Each function $N_{i,j}$ is strictly decreasing on $N_{i,j}^{-1}[e_j]$.
\item[{\rm 2)}] A composition $N_{i_1,i_2,\dots, i_{k+1}}$ of an odd number of functions $N_{i_1,i_2}$, $N_{i_2,i_3}$, \dots, $N_{i_k,i_{k+1}}$ is strictly decreasing on $N_{i_1,i_2,\dots, i_{k+1}}^{-1}[e_{k+1}]$. Therefore if $i_1=i_{k+1}$, then $N_{i_1,i_2,\dots, i_{k+1}}$ has at most one fixed point in $e_{i_1}$.
\item[{\rm 3)}] For each sequence $i_1,i_2,\dots,i_k$ of odd length there is at most one cycle $w_1w_2 \dots w_k$ in $H$ with $w_j\in e_{i_j}$ for each $j=1,2,\dots,k$.
\end{enumerate}
\qed
\end{observation}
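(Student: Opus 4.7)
The plan is to prove the three parts in sequence, with parts 2) and 3) following by short deductions from part 1).

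For part 1), I would use a direct geometric computation. Fix linear parameterizations $w(t) = v_1 + t(v_i - v_1)$ of $e_i$ and $w'(t') = v_1 + t'(v_j - v_1)$ of $e_j$ with $t, t' \in (0, 1]$, and write the hole as $h_{i,j} = v_1 + \alpha(v_i - v_1) + \beta(v_j - v_1)$ where $\alpha, \beta > 0$ and $\alpha + \beta < 1$ (since $h_{i,j}$ lies in the interior of the triangle $v_1 v_i v_j$). Requiring that $w(t)$, $h_{i,j}$ and $w'(t')$ be collinear with $h_{i,j}$ strictly between the other two yields the explicit formula
\[
t' \;=\; \frac{\beta t}{t - \alpha},
\]
valid precisely when $t \in [\alpha/(1-\beta),\,1]$. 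A one-line differentiation gives $dt'/dt = -\alpha\beta / (t - \alpha)^2 < 0$, so $N_{i,j}$ is strictly decreasing on $N_{i,j}^{-1}[e_j]$. Geometrically, the line through the moving point $w$ and the fixed hole $h_{i,j}$ pivots about $h_{i,j}$, and its intersection with $e_j$ therefore slides in the opposite direction along $e_j$.

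For part 2), a trivial parity induction shows that a composition of two strictly decreasing real-valued maps is strictly increasing, and composing further with a strictly decreasing map flips monotonicity again; so a composition of $k$ strictly decreasing maps is strictly decreasing precisely when $k$ is odd. Applying this to the $k$ maps $N_{i_\ell, i_{\ell+1}}$ gives strict monotonicity of $N_{i_1, i_2, \dots, i_{k+1}}$ on $N_{i_1, i_2, \dots, i_{k+1}}^{-1}[e_{i_{k+1}}]$, because on this set every intermediate image lies in an open edge (otherwise some $N$ in the chain would absorb to $\lambda$ and the final image would be $\lambda \notin e_{i_{k+1}}$), so part 1) applies at each step. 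If $i_1 = i_{k+1}$ this is a strictly decreasing self-map on a subset of $e_{i_1}$, identified with a subset of $(0,1]$; any such map has at most one fixed point, since two fixed points $x < y$ would give $x = f(x) > f(y) = y$, a contradiction.

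For part 3), consider a cycle $w_1 w_2 \dots w_k$ in $H$ with $w_j \in e_{i_j}$. For each $j$ (indices modulo $k$), the segment $\overline{w_j w_{j+1}}$ is an edge of $H$, hence not fully contained in $X$, so it contains some one-point hole. This segment lies in the closed triangle $v_1 v_{i_j} v_{i_{j+1}}$ (with $i_j, i_{j+1} \ge 2$), and the only hole of $X$ in this triangle is $h_{i_j, i_{j+1}}$, so $w_{j+1} = N_{i_j, i_{j+1}}(w_j)$. Iterating around the cycle, $w_1$ is a fixed point of the length-$k$ composition $N_{i_1, i_2, \dots, i_k, i_1}$. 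Since $k$ is odd, part 2) gives at most one such fixed point, hence at most one such cycle.

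The main technical point to keep honest is the bookkeeping around the partial domains and the absorbing value $\lambda$: in the composition in part 2) one must check that on the preimage $N_{i_1, \dots, i_{k+1}}^{-1}[e_{i_{k+1}}]$ every intermediate $N$-image stays in an open edge, so that part 1) does apply; in part 3) this is automatic because the vertices $w_j$ lie in open edges by assumption. Beyond this, each of the three parts is either a one-line derivative computation, a parity count, or an unwrapping of definitions, so I do not expect any serious obstacle.
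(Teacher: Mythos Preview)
Your proposal is correct. The paper gives no proof of this observation at all (it is stated with a bare \qed), so there is nothing to compare against; your argument is the natural one, and your explicit formula $t'=\beta t/(t-\alpha)$ for $N_{i,j}$ in fact coincides with the expression the paper derives in the very next observation (Observation~\ref{obs_Nij}), there obtained via a similar-triangles computation rather than differentiation.
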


Now we find a precise form of the functions $N_{i,j}$, which will allow us to determine all the odd cycles in $H$.

The edges $e_i$ and $e_j$ determine a canonical coordinate system in the triangle $v_1 v_i v_j$, by an affine map to $\mathbb{R}^2$ that sends the vertex $v_1$ to $(0,0)$, $v_i$ to $(1,0)$ and $v_j$ to $(0,1)$. Let $(x_{i,j},y_{i,j})$ be the coordinates of the hole $h_{i,j}$.
%Note that affine transformation does not change the invisibility graph

\begin{observation}\label{obs_Nij}
Suppose that $N_{i,j}(x)=y$ for some $x,y\in (0,1]$. Then
\[
y=y_{i,j} + \frac{x_{i,j}y_{i,j}}{x-x_{i,j}} = \frac{y_{i,j}x}{x-x_{i,j}}.
\]
\end{observation}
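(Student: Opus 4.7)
The plan is to do a direct collinearity computation in the affine coordinates that have already been set up on the triangle $v_1 v_i v_j$. Under the canonical identification, a point $w \in e_i$ with parameter value $x \in (0,1]$ has coordinates $(x, 0)$, a point in $e_j$ with parameter value $y \in (0,1]$ has coordinates $(0, y)$, and the hole is $h_{i,j} = (x_{i,j}, y_{i,j})$ with both coordinates positive. By the definition of the invisibility graph restricted to the triangle $v_1 v_i v_j$, two points of this triangle fail to see each other precisely when the segment between them passes through the hole $h_{i,j}$, since the hole is the only obstruction in the relative interior of the triangle. Hence $N_{i,j}(x) = y$ exactly when the three points $(x,0)$, $(x_{i,j}, y_{i,j})$, and $(0, y)$ are collinear and $h_{i,j}$ lies between the other two.

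The first step is to write down the equation of the line through $(x, 0)$ and $(0, y)$, which, since both values are positive, takes the intercept form
\[
\frac{X}{x} + \frac{Y}{y} = 1.
\]
Substituting the coordinates of $h_{i,j}$ gives the collinearity condition
\[
\frac{x_{i,j}}{x} + \frac{y_{i,j}}{y} = 1,
\]
and solving this for $y$ yields
\[
y = \frac{x\, y_{i,j}}{x - x_{i,j}},
\]
which is the second expression in the claim. The first expression then follows by rewriting the numerator as $(x - x_{i,j})\, y_{i,j} + x_{i,j}\, y_{i,j}$ and splitting the fraction.

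There is essentially no obstacle here: once the coordinate system from the paragraph preceding the observation is invoked, the statement reduces to a one-line linear algebra computation. The only thing worth checking briefly is consistency with the convention that $N_{i,j}(x) = \lambda$ when the ray from $w$ through $h_{i,j}$ misses $e_j$; indeed, the formula above gives a value $y \in (0,1]$ exactly when $x > x_{i,j}$ and $x\, y_{i,j} \le x - x_{i,j}$, which matches the geometric picture, and in all other cases the hypothesis $y \in (0,1]$ of the observation does not apply.
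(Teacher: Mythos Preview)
Your proof is correct. The paper's own argument is the same elementary collinearity computation, phrased via similar triangles rather than the intercept form: from the figure it reads off the proportion $\dfrac{x-x_{i,j}}{y_{i,j}}=\dfrac{x_{i,j}}{y-y_{i,j}}$ and solves, which is exactly your equation $\dfrac{x_{i,j}}{x}+\dfrac{y_{i,j}}{y}=1$ rearranged.
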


\begin{proof}
Refer to Figure~\ref{obr_souradnice}. By the similarity of the two shaded triangles, we have 
\[
\frac{x-x_{i,j}}{y_{i,j}}=\frac{x_{i,j}}{y-y_{i,j}}
\]
and the formula follows.
\end{proof}

\begin{figure}
\begin{center}
\ifpdf\epsfbox{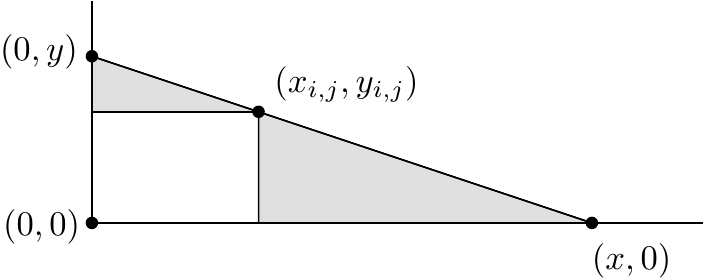}\fi
\end{center}
\caption{Deriving the formula for the function $N_{i,j}$.}
\label{obr_souradnice}
\end{figure}

Observation~\ref{obs_Nij} shows that $N_{i,j}$ is a fractional linear transformation (when restricted to the segment $N_{i,j}^{-1}[e_j]$). Therefore, we can alternatively describe $N_{i,j}$ as a projective map of the real projective line, or by a $2\times 2$ matrix
\[
M_{i,j}=\begin{pmatrix} y_{i,j} & 0 \\ 1 & -x_{i,j} \end{pmatrix},
\]
which satisfies
\[
M_{i,j}\begin{pmatrix} x\\1 \end{pmatrix} = \begin{pmatrix} yz\\z \end{pmatrix}
\]
for some nonzero $z$, which depends on $x$.
Since scalar multiples of $M_{i,j}$ determine the same projective map, we may choose a representing matrix $M'_{i,j}$ with determinant $-1$:

\[
M'_{i,j}\mathrel{\mathop:}=M_{i,j}\cdot\sqrt{x_{i,j}y_{i,j}} = \begin{pmatrix} c_{i,j} & 0 \\ d_{i,j} & -c^{-1}_{i,j} \end{pmatrix}
\]
where $c_{i,j}=\sqrt{x_{i,j}/y_{i,j}}$ and $d_{i,j}=\sqrt{x_{i,j}y_{i,j}}$. Note that $M'_{j,i}$ is the inverse of $M'_{i,j}$.
Composition of maps $N_{i,j}$ now corresponds to multiplication of the matrices $M'_{i,j}$.

\begin{observation}\label{obs_vlastnivektory}
A map $N_{i_1,i_2,\dots, i_{k},i_1}$ has a fixed point $x$ if and only if
$\left(\begin{smallmatrix} x \\ 1 \end{smallmatrix}\right)$
is an eigenvector of the matrix $M'_{i_1,i_2,\dots, i_{k},i_1}\mathrel{\mathop:}=M'_{i_k,i_1}\cdots M'_{i_2,i_3}M'_{i_1,i_2}$.
\qed
\end{observation}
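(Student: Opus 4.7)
The plan is to derive the observation as a direct consequence of the matrix formalism set up in the preceding paragraph. Recall that the map $N_{i,j}$ was identified (on the subset where its image lies in $e_j$) with the projective transformation of $\mathbb{RP}^1$ represented by the matrix $M'_{i,j}$; concretely, the matrix $M'_{i,j}$ sends the column vector $\bigl(\begin{smallmatrix} x \\ 1 \end{smallmatrix}\bigr)$ to $\bigl(\begin{smallmatrix} N_{i,j}(x)\,z \\ z \end{smallmatrix}\bigr)$ for some nonzero scalar $z$ depending on $x$. Moreover, it was already noted that composition of the $N$-maps corresponds to multiplication of the matrices $M'$. Hence the composite map $N_{i_1,i_2,\dots,i_k,i_1}$ is represented, in exactly the same sense, by $M'_{i_1,i_2,\dots,i_k,i_1}$.

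With this in place, the observation reduces to a tautology about projective fixed points. In one direction, suppose $x \in e_{i_1}$ is a fixed point of $N_{i_1,i_2,\dots,i_k,i_1}$. Then
\[
M'_{i_1,i_2,\dots,i_k,i_1}\begin{pmatrix} x \\ 1 \end{pmatrix} = \begin{pmatrix} N_{i_1,i_2,\dots,i_k,i_1}(x)\,z \\ z \end{pmatrix} = \begin{pmatrix} xz \\ z \end{pmatrix} = z\begin{pmatrix} x \\ 1 \end{pmatrix},
\]
so $\bigl(\begin{smallmatrix} x \\ 1 \end{smallmatrix}\bigr)$ is an eigenvector with eigenvalue $z$ (and $z \neq 0$ since the matrix is invertible). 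Conversely, if $\bigl(\begin{smallmatrix} x \\ 1 \end{smallmatrix}\bigr)$ is an eigenvector of $M'_{i_1,i_2,\dots,i_k,i_1}$ with eigenvalue $\lambda$, then $\lambda \neq 0$, and reading off the second coordinate of the product gives $\lambda = z$, while the first coordinate yields $N_{i_1,i_2,\dots,i_k,i_1}(x)\,z = xz$, hence $N_{i_1,i_2,\dots,i_k,i_1}(x) = x$.

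There is no real obstacle here; the only thing to keep in mind is that one should verify that the appropriate compositions are defined at the point $x$ (i.e., the intermediate images actually land in the respective edges $e_{i_j}$ rather than at the symbol $\lambda$), but this is automatic from the existence of a representing eigenvector with positive first coordinate and nonzero second coordinate, since intermediate images can be recovered by applying the corresponding initial segments of the matrix product.
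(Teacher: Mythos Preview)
Your proposal is correct and follows exactly the intended reasoning: the paper gives no proof at all (the statement ends with \qed), treating the observation as an immediate consequence of the preceding paragraph where the $N_{i,j}$ are identified with projective maps represented by the $M'_{i,j}$ and composition corresponds to matrix multiplication. Your write-up simply spells this out.

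One minor remark on your final paragraph: the claim that well-definedness of the intermediate images is ``automatic'' from the eigenvector condition is not quite justified --- having $x>0$ does not by itself force all partial products $M'_{i_{j-1},i_j}\cdots M'_{i_1,i_2}\bigl(\begin{smallmatrix} x\\1\end{smallmatrix}\bigr)$ to represent points in $(0,1]$. However, this direction of the equivalence is never used in the paper: in Lemma~\ref{lem_lichecykly} one starts from an odd closed walk in $H$, so all intermediate neighbors exist by hypothesis, and only the implication ``fixed point $\Rightarrow$ eigenvector'' is invoked. So the issue is harmless.
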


\begin{lemma}\label{lem_ABBA}
Let $A$ and $B$ be two lower-triangular $2\times 2$ matrices that share an eigenvector $v_x=\left(\begin{smallmatrix} x \\ 1 \end{smallmatrix}\right)$, for some $x\neq 0$. Then $AB=BA$.
\end{lemma}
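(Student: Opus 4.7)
The plan is to write $A$ and $B$ explicitly in coordinates, translate the common-eigenvector hypothesis into two simple scalar relations among their entries, and then verify $AB=BA$ by direct computation; only the $(2,1)$-entry requires any work.

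First I would set
\[
A=\begin{pmatrix} a_1 & 0 \\ a_2 & a_3 \end{pmatrix},\qquad
B=\begin{pmatrix} b_1 & 0 \\ b_2 & b_3 \end{pmatrix},
\]
and examine what it means for $v_x=(x,1)^T$ to be an eigenvector of $A$. The first coordinate of $Av_x$ equals $a_1 x$, so since $x\neq 0$ the corresponding eigenvalue must be $a_1$. Looking then at the second coordinate, $a_2 x + a_3 = a_1$, i.e.\ $a_2 x = a_1-a_3$. Applying the same reasoning to $B$ gives $b_2 x = b_1 - b_3$. This is the one place the assumption $x\neq 0$ is used, and it is the only subtle point in the argument.

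Next I would compute
\[
AB=\begin{pmatrix} a_1 b_1 & 0 \\ a_2 b_1 + a_3 b_2 & a_3 b_3 \end{pmatrix},\qquad
BA=\begin{pmatrix} a_1 b_1 & 0 \\ a_1 b_2 + a_2 b_3 & a_3 b_3 \end{pmatrix}.
\]
The diagonal entries and the $(1,2)$-entries trivially agree, so the claim reduces to showing that the $(2,1)$-entries coincide. Their difference is
\[
a_2 b_1 + a_3 b_2 - a_1 b_2 - a_2 b_3 \;=\; a_2(b_1-b_3) - b_2(a_1-a_3),
\]
and substituting $a_1-a_3=a_2 x$ and $b_1-b_3=b_2 x$ obtained in the previous step yields $a_2 b_2 x - b_2 a_2 x = 0$. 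Hence $AB=BA$, as desired.
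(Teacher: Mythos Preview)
Your proof is correct and follows essentially the same direct-computation approach as the paper. One small nicety: by writing the eigenvector condition as $a_2 x = a_1 - a_3$ rather than dividing to get $x=(a_1-a_3)/a_2$, you avoid the separate treatment of the diagonal case that the paper carries out first.
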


\begin{proof}
If $A$ is diagonal and has an eigenvector $v_x$ with $x\neq 0$, then $A$ is a scalar multiple of the identity matrix, and so it commutes with any $2\times 2$ matrix. The case of $B$ being diagonal is symmetric. From now assume that neither of $A$ or $B$ is diagonal.

Let $A = \left(\begin{smallmatrix} a & 0 \\ b & c \end{smallmatrix}\right)$ and  $B = \left(\begin{smallmatrix} a' & 0 \\ b' & c' \end{smallmatrix}\right)$.
Then $AB=BA$ if and only if $ba'+cb'=b'a+c'b$, equivalently, $b(a'-c')=b'(a-c)$.
Since $x\neq 0$, the vector $v_x$ is an eigenvector of $A$ if and only if $a=bx+c$, equivalently, $x=(a-c)/b$ (note that $b\neq 0$ as $A$ is not diagonal by our assumption). Similarly, $v_x$ is an eigenvector of $B$ if and only if $x=(a'-c')/b'$. Therefore, $A$ and $B$ share an eigenvector $v_x$, for some nonzero $x$, if and only if $(a-c)/b=(a'-c')/b'$, equivalently, $b(a'-c')=b'(a-c)$.
\end{proof}

We will use a few algebraic results about free metabelian groups. The {\em free metabelian group with $m$ generators\/} is the quotient of the free group $F_m$ with generators $x_1,x_2,\dots,x_m$ by the second derived subgroup $F''_m$. For every $i\in\{1,2,\dots,m\}$ and every element $a\in F_{m}$ written as $a=x_{i_1}^{k_1}x_{i_2}^{k_2}\dots x_{i_n}^{k_n}$, where
$n\in\mathbb{N}$, $i_j\in\{1,2,\dots,m\}$ and $k_j\in\mathbb{Z}$, we define the {\em total power of $x_i$ in $a$} as the number $\sum_{j\in\{1,2,\dots,n\};i_j=i}k_j$, which is independent on the chosen expression of $a$. The commutator subgroup $F'_{m}$ then consists precisely of those elements $a$ of $F_m$ such that the total power of every generator $x_i$ in $a$ is zero. 
For this reason we can analogously define the total power of generators also for the elements of the free metabelian group. 

The following representation of free metabelian groups by $2\times 2$ matrices was found by Magnus~\cite{Magnus} and is usually called the {\em Magnus embedding}.

\begin{theorem}
\label{magnus}
{\rm \cite{Magnus} (see also{~\cite{DruKap13_lectures, Du69_magnus, Gupta}})}
Let $R=\mathbb{Z}[x_{1},x_{2},\dots,x_{m},y_{1},y_{2},\dots,y_{m}]$ be the ring of polynomials over $\mathbb{Z}$ with variables $x_{1},x_{2},\dots,x_{m},y_{1},y_{2},\dots,y_{m}$. Let $M_{2}(R)$ be the ring of all $2\times 2$ matrices with entries from $R$. Let $H$ be a free metabelian group with basis $X_{1},X_{2},\dots,X_{m}$. Then
there is a faithful representation $\varphi : H \to M_{2}(R)$ such that
$\varphi(X_{i}) =(\begin{smallmatrix}x_{i}& y_{i}\\0&1\end{smallmatrix})$ for every $i=1,2,\dots,m$.
\end{theorem}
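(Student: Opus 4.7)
The plan is threefold: verify that the assignment $X_i\mapsto M_i:=\bigl(\begin{smallmatrix}x_i&y_i\\0&1\end{smallmatrix}\bigr)$ descends to a homomorphism $\varphi$ from the free metabelian group $H$, derive an explicit formula for $\varphi(w)$ in terms of Fox derivatives, and then read off injectivity from the known behaviour of those derivatives. For well-definedness I would work in $2\times 2$ matrices over the Laurent extension $\mathbb{Z}[x_1^{\pm 1},\dots,x_m^{\pm 1},y_1,\dots,y_m]$ in order to have formal inverses, and use the product rule $\bigl(\begin{smallmatrix}a&b\\0&1\end{smallmatrix}\bigr)\bigl(\begin{smallmatrix}c&d\\0&1\end{smallmatrix}\bigr)=\bigl(\begin{smallmatrix}ac&ad+b\\0&1\end{smallmatrix}\bigr)$ to observe that every product of the $M_i^{\pm 1}$ has the shape $\bigl(\begin{smallmatrix}\mu&p\\0&1\end{smallmatrix}\bigr)$, where $\mu$ is a Laurent monomial in the commuting variables $x_1,\dots,x_m$. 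Consequently every commutator has upper-left entry $1$, and since matrices of the form $\bigl(\begin{smallmatrix}1&p\\0&1\end{smallmatrix}\bigr)$ commute under multiplication, the image is metabelian. The universal property of $H=F_m/F_m''$ then yields the desired homomorphism $\varphi$.

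Next, by induction on the length of $w$, I would derive the formula
\[
\varphi(w)=\begin{pmatrix}\pi(w) & \sum_{i=1}^{m}\overline{\partial w/\partial X_i}\; y_i\\[2pt] 0 & 1\end{pmatrix},
\]
where $\pi\colon F_m\to F_m^{\mathrm{ab}}=\mathbb{Z}^m$ is the abelianization (written multiplicatively), $\partial w/\partial X_i\in\mathbb{Z}[F_m]$ is the Fox derivative, and the overline denotes the induced ring map $\mathbb{Z}[F_m]\to\mathbb{Z}[F_m^{\mathrm{ab}}]\cong\mathbb{Z}[x_1^{\pm 1},\dots,x_m^{\pm 1}]$. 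The inductive step uses precisely the Leibniz identity $\partial(uv)/\partial X_i=\partial u/\partial X_i+u\cdot\partial v/\partial X_i$, which is exactly what is needed to match the $(1,2)$-entry produced by the matrix product rule above.

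For faithfulness, I assume $\varphi(w)=I$. The $(1,1)$-entry forces $\pi(w)=1$, so $w\in F_m'$. The $(1,2)$-entry, combined with the $\mathbb{Z}[x_1^{\pm 1},\dots,x_m^{\pm 1}]$-linear independence of $y_1,\dots,y_m$ in $R$, then forces $\overline{\partial w/\partial X_i}=0$ for every $i$. The main obstacle, and the deepest input to the argument, is the classical Fox--Magnus theorem that the map $F_m'/F_m''\to\bigoplus_{i=1}^{m}\mathbb{Z}[F_m^{\mathrm{ab}}]$ sending $w\mapsto(\overline{\partial w/\partial X_i})_{i}$ is injective; equivalently, $F_m'/F_m''$ is a free $\mathbb{Z}[F_m^{\mathrm{ab}}]$-module whose structure is detected by Fox derivatives. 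I would prove this either by identifying $F_m'/F_m''$ with the relation module of the short exact sequence $1\to F_m'\to F_m\to\mathbb{Z}^m\to 1$ and computing it directly, or by exhibiting a $\mathbb{Z}[F_m^{\mathrm{ab}}]$-basis via basic commutators $[X_i,X_j]$. Once this ingredient is in hand, $w\in F_m''$, so $\varphi$ is faithful on $H$.
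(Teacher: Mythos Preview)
The paper does not prove this theorem: it is quoted as a classical result of Magnus, with references to~\cite{Magnus, DruKap13_lectures, Du69_magnus, Gupta}, and is used as a black box in the proof of Theorem~\ref{metabelian}. There is therefore no ``paper's own proof'' to compare against.

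That said, your sketch is essentially the standard argument one finds in the cited sources. A few remarks. First, you correctly note that one must pass to the Laurent extension $\mathbb{Z}[x_1^{\pm1},\dots,x_m^{\pm1},y_1,\dots,y_m]$ to invert the $M_i$; the statement in the paper with $R=\mathbb{Z}[x_1,\dots,y_m]$ is slightly imprecise on this point, but it is harmless for the application. Second, be careful with the injectivity step: the assertion that $\overline{\partial w/\partial X_i}=0$ for all $i$ and $w\in F_m'$ forces $w\in F_m''$ is sometimes \emph{stated} as a consequence of the Magnus embedding, so invoking it as a ``classical Fox--Magnus theorem'' risks circularity. Your proposed alternative---identifying $F_m'/F_m''$ with the relation module of $1\to F_m'\to F_m\to\mathbb{Z}^m\to 1$ and computing it as a free $\mathbb{Z}[\mathbb{Z}^m]$-module---is the honest route and is exactly how the references carry it out. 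With that ingredient supplied independently, your argument is complete and matches the literature.
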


\begin{theorem}\label{metabelian}
Suppose that the entries $c_{i,j},d_{i,j}$ in the matrices $M'_{i,j}$, $2\le i < j\le n$, are algebraically independent. Then the matrices $M'_{i,j}$, $2\le i < j\le n$, generate a free metabelian group $G$.
\end{theorem}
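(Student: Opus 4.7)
The plan is to verify the two properties defining a free metabelian group: first, that $G$ is metabelian; and second, that the canonical surjection $\psi:H\twoheadrightarrow G$ sending $X_{i,j}\mapsto M'_{i,j}$ is injective, where $H$ is the free metabelian group with $\binom{n-1}{2}$ generators $X_{i,j}$ indexed by pairs $(i,j)$, $2\le i<j\le n$. The metabelian property is immediate: every $M'_{i,j}$ lies in the Borel subgroup $B\subset\mathrm{GL}_2$ of invertible lower-triangular matrices, whose commutator subgroup $[B,B]$ sits inside the abelian unipotent subgroup $\bigl\{\bigl(\begin{smallmatrix}1 & 0\\ \ast & 1\end{smallmatrix}\bigr)\bigr\}$. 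Hence $G\subseteq B$ is metabelian, and $\psi$ is well-defined and surjective.

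For injectivity I plan to compare $\psi$ with the Magnus embedding $\varphi$ from Theorem~\ref{magnus}. Since $\varphi(X_{i,j})=\bigl(\begin{smallmatrix}x_{i,j} & y_{i,j}\\ 0 & 1\end{smallmatrix}\bigr)$ is upper triangular while $M'_{i,j}$ is lower triangular, I first pass to the faithful representation $\tilde\varphi(g):=\varphi(g)^{-T}$, which is a homomorphism as the composition of two anti-homomorphisms. I then specialize via the ring map $\sigma:\mathbb{Z}[x_{i,j}^{\pm 1},y_{i,j}]\to K:=\mathbb{Q}(c_{i,j},d_{i,j})$ defined by $\sigma(x_{i,j})=-c_{i,j}^{-2}$ and $\sigma(y_{i,j})=-d_{i,j}/c_{i,j}$, extend it entry-wise to matrices, and verify by direct matrix multiplication that $\psi(X_{i,j})=(-c_{i,j}^{-1})\cdot(\sigma\!\circ\!\tilde\varphi)(X_{i,j})$. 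Since scalar matrices are central, this identity propagates to every word $w\in H$ as
\[
\psi(w)=\lambda(w)\cdot(\sigma\!\circ\!\tilde\varphi)(w),\qquad\lambda(w)=(-1)^{N(w)}\prod_{(i,j)}c_{i,j}^{-n_{i,j}(w)},
\]
where $n_{i,j}(w)$ is the exponent sum of $X_{i,j}$ in $w$ and $N(w)=\sum_{(i,j)}n_{i,j}(w)$; both are well-defined on $H$ via the abelianization.

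Now assume $\psi(w)=I$, so $(\sigma\!\circ\!\tilde\varphi)(w)=\lambda(w)^{-1}I$. Writing $\varphi(w)=\bigl(\begin{smallmatrix}T(w) & F(w)\\ 0 & 1\end{smallmatrix}\bigr)$ with $T(w)=\prod x_{i,j}^{n_{i,j}(w)}$ and $F(w)\in\bigoplus_{(i,j)}\mathbb{Z}[x^{\pm 1}]\,y_{i,j}$, the matrix $\tilde\varphi(w)$ has $(2,2)$-entry $1$, $(1,1)$-entry $T(w)^{-1}$, and $(2,1)$-entry $-T(w)^{-1}F(w)$. Comparing $(2,2)$-entries forces $\lambda(w)=1$, and algebraic independence of the $c_{i,j}$ then makes each $n_{i,j}(w)$ zero, so $w\in[H,H]$ and $T(w)=1$. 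The $(2,1)$-entry equation becomes $\sigma(F(w))=0$; expanding $F(w)=\sum p_{i,j}(x)\,y_{i,j}$ and using algebraic independence of the $d_{i,j}$ over $\mathbb{Q}(c_{k,\ell})$ forces each $p_{i,j}(-c^{-2})$ to vanish, and a further use of algebraic independence of the $c_{i,j}$ forces the Laurent polynomial $p_{i,j}$ itself to be zero. Hence $F(w)=0$, so $\varphi(w)=I$, and the faithfulness of the Magnus embedding gives $w=1$. The main obstacle is the mismatch between the shapes of $\varphi(X_{i,j})$ and $M'_{i,j}$: the inversion--transposition move corrects the orientation and the $(2,2)$-entry, but leaves a residual scalar $-c_{i,j}^{-1}$ that varies from generator to generator, which is why the bookkeeping of the global scalar $\lambda(w)$ and the two-layered use of algebraic independence (first of the $c_{i,j}$, then of the $d_{i,j}$ over $\mathbb{Q}(c)$) are essential for both conclusions of the argument.
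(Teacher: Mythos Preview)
Your proof is correct and follows essentially the same approach as the paper's: both use the inverse--transpose automorphism to convert lower-triangular to upper-triangular matrices and then the substitution $x_{i,j}\mapsto -c_{i,j}^{-2}$, $y_{i,j}\mapsto -c_{i,j}^{-1}d_{i,j}$ (checked algebraically independent) to match the Magnus embedding. The only cosmetic difference is that the paper absorbs the scalar discrepancy via the homomorphism $\pi\bigl(\begin{smallmatrix}a&b\\0&c\end{smallmatrix}\bigr)=\bigl(\begin{smallmatrix}a/c&b/c\\0&1\end{smallmatrix}\bigr)$ on upper-triangular matrices (noting $\ker\pi\cap K=1$), whereas you track the scalar $\lambda(w)$ explicitly and eliminate it using the $(2,2)$-entry.
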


\begin{proof}
Let $G$ be the group generated by the matrices $M'_{i,j}$, $2\le i < j\le n$.
Considering an automorphism $\psi$ of $\mathrm{GL}_{2}(\mathbb{R})$ given by $\psi(A)=(A^{T})^{-1}$, we get that $G$ is isomorphic to $K=\psi(G)$, which is generated by the matrices $P_{ij}=\psi(M'_{ij})=\Big(\begin{smallmatrix}e_{i,j}& f_{i,j}\\0&-e_{i,j}^{-1}\end{smallmatrix}\Big)$, $2\le i < j\le n$, where $e_{i,j}=c_{i,j}^{-1}$ and $f_{i,j}=d_{i,j}$, $2\le i < j\le n$, are again algebraically independent. Let $\mathrm{UT}_{2}(\mathbb{R})$ be the subgroup of $\mathrm{GL}_{2}(\mathbb{R})$ consisting of all upper triangular matrices. Since the map $\pi:\mathrm{UT}_{2}(\mathbb{R})\to \mathrm{UT}_{2}(\mathbb{R})$ defined as $\pi\big((\begin{smallmatrix}a& b\\0&c\end{smallmatrix})\big)=\big(\begin{smallmatrix}a/c& b/c\\0&1\end{smallmatrix}\big)$ is a homomorphism, $K\subseteq\mathrm{UT}_{2}(\mathbb{R})$ and $\ker(\pi)\cap K=1$, we obtain that $K$ is isomorphic to $\pi(K)$, which is generated by the matrices $\pi(P_{ij})=\Big(\begin{smallmatrix}x_{i,j}& y_{i,j}\\0&1\end{smallmatrix}\Big)$, $2\le i < j\le n$, where $x_{i,j}=-e_{i,j}^{2}$ and $y_{i,j}=-e_{i,j}f_{i,j}$. 
Let $\mu:\mathbb{Z}[\{X_{i,j},Y_{i,j}|2\le i < j\le n\}]\to \mathbb{Z}[\{E_{i,j},F_{i,j}|2\le i < j\le n\}]$ be a substitution defined as $\mu(X_{i,j})=-E_{i,j}^{2}$ and $\mu(Y_{i,j})=-E_{i,j}F_{i,j}$. Since $\mu$ is injective on the set of all monomials (with coefficients), it is a monomorphism.
%Now, since the substitution $\mu:\mathbb{Z}[\{X_{i,j},Y_{i,j}|2\le i < j\le n\}]\to \mathbb{Z}[\{E_{i,j},F_{i,j}|2\le i < j\le n\}]$, $\mu(X_{i,j})=-E_{i,j}^{2}$ and $\mu(Y_{i,j})=-E_{i,j}F_{i,j}$, between the rings of polynomials, is clearly injective on the set of all monomials (with coefficients), is $\mu$ a monomorphism. 
Hence $x_{i,j}$ and $y_{i,j}$, $2\le i < j\le n$, are again algebraically independent.

Theorem~\ref{magnus} now implies that $\pi(K)$ is a free metabelian group. 
\end{proof}

The following Theorem was proved by Malcev~\cite{Malcev}.

\begin{theorem}\label{malcev}{\rm \cite{Malcev} (see also~\cite[Chapter II, 1.14]{Gupta})}
Let $G$ be a free metabelian group and $G'$ its commutator subgroup. Two elements $a,b \in G\setminus G'$ commute if and only if they are contained in a common cyclic subgroup of $G$. That is, there exists $c\in G\setminus G'$ such that $a=c^m$ and $b=c^n$ for some pair of integers $m,n$.
\end{theorem}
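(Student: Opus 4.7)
The forward direction is immediate: if $a=c^m$ and $b=c^n$ then $ab=c^{m+n}=ba$. I focus on the converse.

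My plan is to work inside the Magnus representation from Theorem~\ref{magnus}. Under $\varphi$, each $g\in G$ becomes a matrix
\[
\varphi(g)=\begin{pmatrix} u_g & v_g \\ 0 & 1 \end{pmatrix},
\]
where $u_g$ is the Laurent monomial in $x_1,\dots,x_m$ corresponding to the abelianization image $\bar g\in G/G'\cong\mathbb{Z}^m$, and $v_g$ lies in the ring $R$ of Theorem~\ref{magnus}. The condition $g\notin G'$ is equivalent to $u_g\neq 1$. Multiplying the matrices, the relation $ab=ba$ is equivalent to the single identity
\[
v_a(u_b-1)\;=\;v_b(u_a-1)
\]
in $R$.

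The main step, and the principal obstacle I anticipate, is to deduce from this identity together with $u_a,u_b\neq 1$ that $\bar a$ and $\bar b$ lie in a common cyclic subgroup of $\mathbb{Z}^m$. If $\bar a$ and $\bar b$ were $\mathbb{Q}$-linearly independent, one chooses a homomorphism $\mathbb{Z}^m\to\mathbb{Z}$ separating their supports; in the resulting graded quotient the images of $u_a-1$ and $u_b-1$ are coprime binomials, and decomposing $v_a,v_b$ in the free $\mathbb{Z}[x_1^{\pm},\dots,x_m^{\pm}]$-basis $y_1,\dots,y_m$ (as the Magnus picture permits) forces coefficient-by-coefficient divisibilities that are incompatible with the Fox-derivative form of $v_a,v_b$ unless $a,b\in G'$. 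This contradicts the hypothesis, so there exist $\bar c_0\in\mathbb{Z}^m$ and integers $p,q$ with $\bar a=p\bar c_0$ and $\bar b=q\bar c_0$.

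To upgrade from $G/G'$ to $G$ itself, lift $\bar c_0$ to $c_0\in G$; then $a_1\mathrel{\mathop:}= ac_0^{-p}$ and $b_1\mathrel{\mathop:}= bc_0^{-q}$ lie in the abelian subgroup $G'$. Since $G$ is metabelian, $G'$ carries a $\mathbb{Z}[G/G']$-module structure by conjugation, and the original commuting identity restricts on the pair $(a_1,b_1)$ to a single linear relation in this module. Solving it reduces to a computation inside the free module into which $G'$ embeds via Magnus, and this lets one produce $c_1\in G'$ such that $c\mathrel{\mathop:}= c_0c_1$ satisfies $a=c^p$ and $b=c^q$. This last module-theoretic adjustment is standard once the preceding identity is in place, but it is where the explicit Magnus matrices do most of the work.
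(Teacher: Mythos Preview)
The paper does not prove Theorem~\ref{malcev}; it is quoted as a result of Mal'cev, with references to \cite{Malcev} and \cite{Gupta}. So there is no ``paper's own proof'' to compare against---you are attempting to supply an argument the authors deliberately outsourced.

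Your outline via the Magnus embedding is a reasonable strategy, and the commutation identity $v_a(u_b-1)=v_b(u_a-1)$ is correct. The sketch for forcing $\bar a$ and $\bar b$ into a common cyclic subgroup of $\mathbb{Z}^m$ is very terse, but can be made rigorous: writing $v_a=\sum f_i y_i$ and using the Fox identity $\sum f_i(x_i-1)=u_a-1$, coprimality of $u_a-1$ and $u_b-1$ (which holds when $\bar a,\bar b$ are $\mathbb{Q}$-independent) would force each $f_i$ to be divisible by $u_a-1$, contradicting the augmentation.

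The genuine gap is in your lifting step. You assert that after choosing \emph{some} $\bar c_0$ with $\bar a=p\bar c_0$, $\bar b=q\bar c_0$ and lifting it to $c_0$, one can always solve for $c_1\in G'$ with $(c_0c_1)^p=a$ and $(c_0c_1)^q=b$. This is false for an arbitrary such choice. For instance, with $G$ free metabelian on $X_1,X_2$ and $a=b=X_1^2[X_1,X_2]$, the natural choice $\bar c_0=e_1$, $p=q=2$ fails: one computes $v_a=(x_1x_2^{-1}+1)y_1+x_1x_2^{-1}(x_1-1)y_2$, and the required divisibility $(1+x_1)\mid(x_1x_2^{-1}+1)$ does not hold, so $a$ is not a square in $G$. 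The correct $c$ here is $a$ itself with $m=n=1$, i.e.\ one must take $\bar c_0=2e_1$. In general the right move is to note that $G'$ is torsion-free as a $\mathbb{Z}[G/G']$-module (immediate from Magnus), deduce that $a^{q'}b^{-p'}=1$ where $p=dp'$, $q=dq'$, $\gcd(p',q')=1$, and then set $c=a^sb^t$ with $sp'+tq'=1$. Your ``standard module-theoretic adjustment'' hides exactly this dependence on the choice of $p,q$, and as written the step does not go through.
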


An {\em $(e_i,e_j)$-edge\/} is an edge $uv$ of $H$ with $u\in e_i$ and $v \in e_j$. Note that as the edges of $H$ are not oriented, each $(e_i,e_j)$-edge is also an $(e_j,e_i)$-edge.

\begin{lemma}\label{lem_lichecykly}
Let $x$ be a vertex of $H$ and let $C[x]$ be the component of $H$ containing $x$. There exist $i,j$ such that every odd cycle in $C[x]$ contains an $(e_i,e_j)$-edge.
\end{lemma}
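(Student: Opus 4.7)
My plan is to exploit the matrix representation of $H$ and the free metabelian structure of the group $G$ generated by the matrices $M'_{i,j}$. The key tool is the linear functional $\sigma \colon G/G' \to \mathbb{Z}$ given by summing all total powers. Any closed walk $W$ in $H$ corresponds to a word whose letters each contribute $\pm 1$ to $\sigma$, so $\sigma$ of the associated matrix has the same parity as $|W|$. In particular, every closed walk of odd length yields an element of $G \setminus G'$ in which at least one generator $M'_{i,j}$ has odd total power.

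If $C[x]$ has no odd cycle, any pair works. Otherwise I would fix such a cycle $\Gamma$ through a vertex $u \in e_{i_1}$ and let $A \in G$ be its matrix. Then $A$ is lower triangular (a product of lower triangular generators) and has $\binom{u}{1}$ as an eigenvector by Observation~\ref{obs_vlastnivektory}. By the parity remark above, $A \notin G'$, so some generator $M'_{i_0,j_0}$ has odd total power in $A$. The candidate pair is $(i_0, j_0)$, and the goal is to show that every odd cycle in $C[x]$ contains an $(e_{i_0}, e_{j_0})$-edge.

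For any other odd cycle $\Gamma'$ through a vertex $w$, take a path $P$ in $H$ from $u$ to $w$. The closed walk $P\Gamma'P^{-1}$ based at $u$ has odd length, so its matrix $A' \in G$ is lower triangular, lies in $G \setminus G'$, and also has $\binom{u}{1}$ as an eigenvector. By Lemma~\ref{lem_ABBA}, $A$ and $A'$ commute; by Theorem~\ref{metabelian}, $G$ is free metabelian; by Theorem~\ref{malcev} there is $C \in G \setminus G'$ with $A = C^m$ and $A' = C^n$. Applying $\sigma$ gives $\sigma(A) = m\sigma(C)$ and $\sigma(A') = n\sigma(C)$, both odd, so $m$ and $n$ are odd. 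Consequently $A \equiv A' \equiv C \pmod{2G/G'}$, and in particular $M'_{i_0,j_0}$ has odd total power in $A'$ as well. Since the abelianizations of $P$ and $P^{-1}$ cancel, the word for $\Gamma'$ itself has odd total power at $M'_{i_0,j_0}$, so $\Gamma'$ uses at least one $(e_{i_0}, e_{j_0})$-edge.

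The main obstacle is the last parity step: Theorem~\ref{malcev} only places $A$ and $A'$ in a common cyclic subgroup, and a priori one of them could be an even power of $C$, which would lose all information about the distinguished generator. The functional $\sigma$ combined with the odd length of the cycles is precisely what rules this out and lets the distinguished generator transfer from $A$ to $A'$.
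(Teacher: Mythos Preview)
Your argument is correct and follows essentially the same route as the paper's proof: base all odd closed walks at a single vertex, use Observation~\ref{obs_vlastnivektory} and Lemma~\ref{lem_ABBA} to see that the corresponding matrices commute, then apply Theorems~\ref{metabelian} and~\ref{malcev} to put them in a common cyclic subgroup, and finally use a parity argument on total powers to show the exponents are odd and hence all generator-power parities agree. Your explicit use of the homomorphism $\sigma$ is exactly the mechanism the paper uses implicitly when it notes that the total power in $L$, $M_W$, and $M_{W'}$ all have the same parity.
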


\begin{proof}
Every odd cycle $C$ in $C[x]$ can be extended to an odd closed walk $PCP^{-1}$ starting and ending at $x$, where $P$ is a path from $x$ to a vertex of $C$. We will show that there exist $i,j$ such that every odd closed walk passing through $x$ has an odd number of $(e_i,e_j)$-edges, from which the lemma follows.

Let $W=(w_1=x,w_2, \dots,w_k,x)$ and $W'=(w'_1=x,w'_2, \dots,w'_{k'},x)$ be two odd closed walks starting at $x$. For each $j=1,2,\dots,k$, let $e_{i_j}$ be the segment containing $w_j$. Similarly, for each $j=1,2,\dots,k'$, let $e_{i'_j}$ be the segment containing $w'_j$. Then by Observation~\ref{obs_vlastnivektory}, the matrices $M_W=M'_{i_1,i_2,\dots, i_{k},i_1}$ and $M_{W'}=M'_{i'_1,i'_2,\dots, i'_{k'},i'_1}$ share the eigenvector $\left(\begin{smallmatrix} x \\ 1 \end{smallmatrix}\right)$. Since $x>0$, Lemma~\ref{lem_ABBA} implies that $M_W$ and $M_{W'}$ commute.

Let $G$ be the group generated by the matrices $M'_{i,j}$. By Theorem~\ref{metabelian}, $G$ is a free metabelian group with generators $M'_{i,j}$, $2 \le i < j \le n$.
Since both matrices $M_W$ and $M_{W'}$ are a product of an odd number of the generators or their inverses, they lie outside of the commutator subgroup $G'$.

By Theorem~\ref{malcev}, there is a matrix $L\in G$ such that $M_{W}=L^m$ and $M_{W'}=L^n$, for some integers $m,n$. 
%Let $P$ be an element of $G$ written as a product $P_1P_2\cdots P_q$ of the generators $M'_{i,j}$. If $P$ contains $k$ occurrences of $M'_{i,j}$ and $l$ occurrences of $M'_{j,i}=(M'_{i,j})^{-1}$, then we say that the {\em total power\/} of the generator $M'_{i,j}$ in $P$ is $k-l$. 
%This implies that $G/G'$ is a free abelian group with generators $M'_{i,j}G'$, $2 \le i < j \le n$. Thus, if $P$ is a product of generators of $G$ or their inverses satisfying the equation $P=1$, the total power of each generator $M'_{i,j}$ in $P$ is zero. 
This implies that $L$ is a product of an odd number of generators, $n$ and $m$ are odd, and for each generator $M'_{i,j}$, the total powers of $M'_{i,j}$ in $L,M_W$ and $M_{W'}$ have the same parity. In particular, there is a matrix $M'_{i,j}$ that has an odd total power in $M_{W}$ and hence it has an odd total power in $M_{W'}$ for every other odd closed walk $W'$ passing through $x$. This means that all odd closed walks $W$ passing through $x$ have an odd number of $(e_i,e_j)$-edges.
\end{proof}

Lemma~\ref{lem_lichecykly} implies that $C[x] \setminus e_i$ is a bipartite graph. Since $C[x]\cap e_i$ is an independent set, the whole component $C[x]$ can be properly colored with three colors.
Consequently, the whole subgraph $H$ is $3$-colorable. We fix one such coloring using colors $1,2$ and $3$.

It remains to define the coloring on the rest of the points of $X$. First we assign color $1$ to  the vertex $v_1$ (we may do this as the vertex $v_1$ can see all points of $H$). Next we color the interiors of the edges of type $ij$ and the interiors of the $2$-dimensional faces of type $1ij$, using colors $1$ and $2$ only. Finally we color the remaining points of $X$, which are isolated in $I(X)$, with color $1$.

%%%%%%%%%%%%%%%%%%%%%%%%%%%%%%%%%%%%%%%%%%%%%%%%%%%%%%%%%%%%%%%%%%%%%%%%%%%%%%%%%%

\subsection{The second construction}

The \emph{median} of a triangle is a line segment joining a vertex to the midpoint of the opposite side.

\begin{observation}
\label{obs:R6trianglepoint}
Let $T$ be a triangle with vertices $A$, $B$ and $C$. Let $AM$ be a median of $T$ and let $P \neq M$ be a point on $AM$.
Let $P_0$ be the intersection of the line $AB$ 
and the line parallel to $BC$ that passes through $P$.
Let $X$ be the intersection of the line $AB$ 
and the line $CP$.
Then $\|B - P_0\| > \|P_0 - X\|$.
\end{observation}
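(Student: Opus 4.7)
The key observation is that the statement is affine-invariant: the hypotheses (midpoint, parallelism with $BC$) and the conclusion (a comparison of two lengths measured on the single line $AB$) are all preserved by any affine transformation. So the plan is to normalize coordinates first and then just compute. I would set $A = (0,0)$, $B = (1,0)$, $C = (0,1)$, so that $M = (1/2, 1/2)$ and any point on segment $AM$ can be written as $P = (t,t)$; the condition $P \neq M$ becomes $t \neq 1/2$, and $P$ on the segment $AM$ forces $t \in [0, 1/2)$.

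With this in hand, the two auxiliary points are immediate. The line through $P$ in the direction of $BC = (-1,1)$ meets the $x$-axis at $P_0 = (2t, 0)$, and the line through $C = (0,1)$ and $P = (t,t)$ meets the $x$-axis at $X = (t/(1-t), 0)$. Therefore
\[
\|B - P_0\| = 1 - 2t, \qquad \|P_0 - X\| = \frac{t(1-2t)}{1-t},
\]
and the desired inequality reduces to $t/(1-t) < 1$, which is equivalent to $t < 1/2$ and hence holds by the hypothesis $P \neq M$ on segment $AM$.

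There is no real obstacle here: once the affine normalization is made, the argument is a one-line calculation. The only subtlety worth mentioning is the degenerate case $P = A$ (i.e.\ $t = 0$), where $P_0 = X = A$ and the strict inequality still holds because the right-hand side is zero while the left-hand side is positive. A purely synthetic rephrasing is also available if preferred: writing $r = AP/AM \in [0,1)$, the intercept theorem gives $AP_0/AB = r$, so $BP_0 = (1-r) \cdot AB$, and Menelaus applied to $\triangle ABM$ with transversal $CPX$ (using $M$ as midpoint of $BC$) yields $AX/AB = r/(2-r)$; comparing the two ratios reproduces the same inequality $r/(2-r) < 1$.
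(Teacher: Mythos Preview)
Your proof is correct and takes a genuinely different route from the paper. The paper argues synthetically: it builds the parallelogram $B'C'CB$ having $BC$ as an edge and $P$ as its center (so $B'=2P-C$, $C'=2P-B$), notes that the side $BB'$ is parallel to $AM$, and concludes that both $P_0$ and $X$ lie in the interior of this parallelogram; since $P_0$ is in fact the midpoint of the chord that line $AB$ cuts from the parallelogram while $X$ lies strictly inside that chord, the inequality follows. Your approach replaces this construction by an affine normalization and a direct computation, reducing everything to $t/(1-t)<1\Leftrightarrow t<1/2$. The gain is that nothing is left implicit: the paper's containment claims (``$P_0$ lies in the interior'', ``$X$ also lies in the interior'') are intuitively clear from the picture but take a moment to justify, whereas your version is a self-contained calculation. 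The paper's version, on the other hand, makes visible the symmetric object (the parallelogram centered at $P$) that explains why $P_0$ sits exactly halfway and why $X$ cannot escape. Your closing Menelaus/intercept variant is a pleasant synthetic alternative in the same spirit as the paper.
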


\begin{proof}
Refer to Figure~\ref{fig:dim5lowpoint}.
Take the parallelogram $B'C'CB$ that has $BC$ as one of its edges 
and $P$ as the intersection of the diagonals.
Then $BB'$ is parallel to $MA$ and so $P_0$ lies in the interior of the parallelogram $B'C'CB$.
Then $X$ also lies in the interior of the parallelogram 
and so its distance from $P_0$ is smaller than the distance between $P_0$ and $B$.
\end{proof}

\myfig{dim5lowpoint}{1}{An illustration the proof of Observation~\ref{obs:R6trianglepoint}.}

The \emph{relative height} of a point $p \in \overline{v_{1} x}$, 
where $x$ is a point on one of the edges of type $ij$, 
is the ratio of the distances of $x$ from $p$ and from $v_1$:
\[
\relheight(p) \mathrel{\mathop:}= \frac{\lVert p-x\rVert}{\lVert v_1-x\rVert }.
\]
For every $i$ and $j$ such that $2 \le i < j \le n$,
we let $p_{ij}$ be the point on the median between $v_1$ and the midpoint $m_{ij}$ of the edge $v_i v_j$
with relative height 
\[
\relheight(p_{ij}) = \frac{3^{ni+j-n^2}}{2}.
\]

We now prove the first part of Theorem~\ref{thm:dim5}. 

\begin{theorem}
\label{thm:dim5SecondNonclosed}
Let $X \mathrel{\mathop:}= P_n \setminus \{p_{ij}, 2\le i<j\le n\}$.
We have
\[
\gamma(X) \ge \Omega(\log n /\log \log n) \qquad \text{and} \qquad \chi(X) = 2.
\]
\end{theorem}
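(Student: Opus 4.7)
The proof has two parts. The lower bound $\gamma(X)\ge\Omega(\log n/\log\log n)$ is an immediate application of Lemma~\ref{lem:dim5covering}: each hole $p_{ij}$ sits in the relative interior of the two-dimensional face $v_1v_iv_j$, so every edge of $P_n$ is contained in $X$ and every triangle of type $1ij$ fails to be contained in $X$ because it contains the removed point $p_{ij}$.

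The upper bound $\chi(X)\le 2$ first reduces to a combinatorial problem on the one-skeleton of $P_n$. Using the standard fact that for a convex polytope and two distinct faces $F_1,F_2$ the open segment between points in $\operatorname{relint}(F_1)$ and $\operatorname{relint}(F_2)$ lies in $\operatorname{relint}(F_1\vee F_2)$, any visibility blockage through $p_{ij}$ has both endpoints in the closed two-face $T_{ij}:=v_1v_iv_j$. Hence every point in the relative interior of a face of dimension at least $3$ or of a two-face not of type $1ij$ is isolated in the invisibility graph and may be colored arbitrarily. Moreover, inside each $T_{ij}$ a proper $2$-coloring of the boundary $\partial T_{ij}\cap X$---meaning that for every chord through $p_{ij}$ its two boundary intersections receive different colors---extends to a proper $2$-coloring of $T_{ij}\cap X$ by assigning each interior point the color of the boundary endpoint of its chord that lies on its side of $p_{ij}$. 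It therefore suffices to $2$-color the one-skeleton of $P_n$ so that this boundary condition holds in every $T_{ij}$.

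Within each triangle $T_{ij}$ the boundary constraint breaks into three map-families: the involution $N_{ij}\colon e_i\to e_j$ (for the edges $e_i=\overline{v_1v_i}$ from the apex), together with $M_{ij}\colon e_i\to\overline{v_iv_j}$ and $M'_{ij}\colon e_j\to\overline{v_iv_j}$ for chords that cross the opposite edge. By Observation~\ref{obs:R6trianglepoint} together with the defining position of $p_{ij}$ on the median from $v_1$, all distinguished points of these maps---the fixed point $2t_{ij}=1-r_{ij}$ of $N_{ij}$ on $e_i$, the shadow $X_{ij}=(1-r_{ij})/(1+r_{ij})$ of $v_j$ on $e_i$, and the analogous distinguished points on the other two edges---lie at distance $\Theta(\varepsilon_{ij})$ from the relevant vertex, where $\varepsilon_{ij}:=2r_{ij}=3^{ni+j-n^2}$. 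The plan is to define $c$ on each edge of the one-skeleton to flip at each of these distinguished points with suitably chosen initial parities, and the technical heart of the proof---the main obstacle---is to verify that for every valid blocked pair the open interval between the two blocked points contains an odd number of flip points. Here the geometric separation of the $\varepsilon_{ij}$'s by factors of at least $3$ is essential: the swap interval for pair $(i,j)$ has length $\Theta(\varepsilon_{ij})$ near $v_i$, flip points of pairs $(i,k)$ with $\varepsilon_{ik}<\varepsilon_{ij}$ are nested in a range much smaller than $\varepsilon_{ij}$, and those with $\varepsilon_{ik}>\varepsilon_{ij}$ fall outside the interval altogether; a careful bookkeeping should show that the running parity is always odd. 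Combined with the first step and the trivial lower bound $\chi(X)\ge 2$ (any chord through any $p_{ij}$ witnesses an edge in $I(X)$), this yields $\chi(X)=2$.
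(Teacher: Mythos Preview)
Your lower bound on $\gamma(X)$ and your reduction of the $2$-coloring problem to the $1$-skeleton (via the face-lattice argument and the chord-extension into triangle interiors) are correct and match the paper.

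The gap is in the $2$-coloring of the $1$-skeleton itself. You propose an explicit piecewise-constant coloring with flip points at the various distinguished heights and then assert that ``a careful bookkeeping should show that the running parity is always odd.'' This bookkeeping is the entire content of the argument, and you have not carried it out. Each edge $e_i$ participates in $n-2$ triangles $T_{ij}$, so it would carry on the order of $n$ flip points; for every pair $(i,j)$ you must verify that the projective map $N_{ij}$ sends each maximal monochromatic interval of $e_i$ into the complementary color on $e_j$, and simultaneously that the chord maps into $\overline{v_iv_j}$ are consistent with whatever coloring you place there. The factor-$3$ separation of the heights $\relheight(p_{ij})$ does give nesting of the relevant intervals, but turning this into a correct parity statement across all $\binom{n-1}{2}$ constraints is not automatic, and your proposal stops short of doing it.

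The paper bypasses all of this with a one-paragraph structural argument: it shows that the subgraph $G$ of $I(X)$ induced on $\bigcup_{i\ge 2}\overline{v_1v_i}$ is \emph{acyclic}. Suppose $G$ had a cycle; let $s$ be the vertex of the cycle with the largest relative height and let $t,t'$ be its two cycle-neighbors, lying on $\overline{v_1v_l}$ and $\overline{v_1v_{l'}}$ with $s\in\overline{v_1v_k}$. Then the holes $p_{kl}\in\overline{st}$ and $p_{kl'}\in\overline{st'}$ both satisfy, by Observation~\ref{obs:R6trianglepoint}, $\relheight(p_{kl}),\relheight(p_{kl'})\in[\relheight(s)/2,\relheight(s)]$. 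But any two distinct holes have relative heights differing by a factor of at least $3$, a contradiction. Since $G$ is a forest it is $2$-colorable; each point of an edge $\overline{v_iv_j}$ with $i,j\ge 2$ has exactly one neighbor in $G$ and no neighbor on any other edge of type $ij$, so the coloring extends, and then your chord-extension finishes the triangle interiors.

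So the factor-$3$ separation you identified is indeed the key, but the paper uses it once, to rule out cycles, rather than to control an explicit coloring. (The paper does remark that an explicit construction along your lines can be made to work, avoiding the axiom of choice implicit in $2$-coloring an infinite forest, but that is done via the machinery of Lemma~\ref{lem:dim5SecondClosedPart1} and is considerably more work than the acyclicity argument.)
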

\begin{proof}
The bound $\gamma(X) \ge \Omega(\log n /\log \log n)$ follows from Lemma~\ref{lem:dim5covering}.

For simplicity of notation, we define $p_{k l} \mathrel{\mathop:}= p_{l k}$ for every $k>l$.

Let $G$ be the subgraph of the invisibility graph $I(X)$ 
induced by the points on the edges of type $1i$.
We first show that $G$ is acyclic and so it can be colored by two colors.
For contradiction, assume that $G$ contains a cycle.
Let $s$ be the point of the cycle of maximum relative height
and let $t$ and $t'$ be its neighbors along the cycle.
Let $k$, $l$ and $l'$ be the distinct indices such that
$s \in \overline{v_1v_k}$, $t \in \overline{v_1 v_l}$ and $t' \in \overline{v_1v_{l'}}$.
We have $p_{k l} \in \overline{st}$ and $p_{k l'} \in \overline{st'}$.
By Observation~\ref{obs:R6trianglepoint}, 
$\relheight(p_{k l}), \relheight(p_{k l'}) \in [\relheight(s)/2, \relheight(s)]$.
This is a contradiction since the ratio between the relative heights of any two points 
from $\{p_{ij}, 2 \le i < j \le n\}$ is at least $3$.

We can thus color all the points on the edges of type $1i$ with two colors.
Every point in the interior of an edge of type $ij$ is connected in $I(X)$ 
to exactly one already colored point and is not connected to any point on an edge of type $ij$.
Therefore we can extend the coloring to all the points on all the edges of $X$.

Let $q$ be a point in the interior of a triangular face $S$ with vertices $v_1$, $v_k$ and $v_{l}$.
Then the ray emanating from $p_{k l}$ and passing through $q$ intersects one of the edges of $S$.
We color $q$ by the color of the point in the intersection.
This produces a proper coloring of the subgraph of $I(X)$ induced by the points 
on the triangular faces of type $1ij$.

All points of $X$ lying outside the triangular faces of type $1ij$ are vertices of degree zero in $I(X)$
and so they can be colored by color $1$.
\end{proof}

Note that the presented proof of Theorem~\ref{thm:dim5SecondNonclosed} relies on the axiom of choice.
However, the proof of the second part of Theorem~\ref{thm:dim5} below can be modified to construct,
in a finite number of steps, a $2$-coloring of $I(P_n \setminus \{p_{ij}, 2 \le i < j \le n\})$ 
with two colors, thus providing a proof without the need for the axiom of choice.

Let $I \subseteq [0,1]$ be an interval.
To simplify the notation, we say that a point $x \in \overline{v_1v_i}$ lies in the \emph{$I$-interval} 
of the edge $\overline{v_1v_i}$ if $\relheight(x) \in I$.
The \emph{$(k,l)$-piece} of an edge of type $1i$ is the 
$[3^{kn+l-1-n^2}, 3^{kn+l-n^2})$-interval of the edge.

Assume that a line segment with endpoints $x \in \overline{v_1v_i}$ and $y \in \overline{v_1v_j}$ 
contains $p_{ij}$.
By Observation~\ref{obs:R6trianglepoint}, $x$ and $y$ lie in the $[0,2 \cdot\relheight(p_{ij}))$-intervals 
of the edges $\overline{v_1v_i}$ and $\overline{v_1v_j}$, respectively.

Given $\varepsilon > 0$, we let $T_{ij}(\varepsilon)$ be the image of the triangular face $v_1v_iv_j$ 
under the homothety with center $p_{ij}$ and ratio $\varepsilon$.
Since $2\cdot\relheight(p_{ij}) = 3^{ni+j-n^2} $, we have the following corollary of Observation~\ref{obs:R6trianglepoint}.

\begin{corollary}
\label{cor:R6ratios}
For every $i$ and $j$ satisfying $2 \le i < j \le n$, there is $\varepsilon'_{ij} > 0$ and 
$\bar{\delta}_{i,j} > 0$ satisfying the following.
Whenever a line segment with endpoints $x \in \overline{v_1v_i}$ and $y \in \overline{v_1v_j}$ 
intersects $T_{ij}(\varepsilon'_{ij})$,
then $x$ and $y$ lie in the $[0,3^{ni+j-n^2})$-intervals of $\overline{v_1v_i}$ and $\overline{v_1v_j}$,
respectively.
Additionally, at most one of $x$ and $y$ lies in the 
$[0,3^{ni+j-1-n^2}+\bar{\delta}_{i,j}]$-interval 
of the edge of type $1i$ that contains it.
\qed
\end{corollary}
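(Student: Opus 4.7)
The plan is to reduce the statement to a compactness argument based on an explicit algebraic identity characterizing the ``exact'' case, where the segment $\overline{xy}$ passes through $p_{ij}$ itself. Write $\alpha\mathrel{\mathop:}=\relheight(p_{ij})=3^{ni+j-n^2}/2$, $r_x\mathrel{\mathop:}=\relheight(x)$, and $r_y\mathrel{\mathop:}=\relheight(y)$. In barycentric coordinates with respect to the triangle $v_1v_iv_j$, one has $x=(r_x,1-r_x,0)$, $y=(r_y,0,1-r_y)$, and $p_{ij}=(\alpha,(1-\alpha)/2,(1-\alpha)/2)$ since $p_{ij}$ lies on the median from $v_1$ to $m_{ij}$. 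Solving the collinearity relation $p_{ij}=(1-t)x+ty$ and eliminating $t$ gives the single identity
\begin{equation}
\label{eq:harmonic}
\frac{1}{1-r_x}+\frac{1}{1-r_y}=\frac{2}{1-\alpha}.
\end{equation}

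Two strict inequalities follow from (\ref{eq:harmonic}). By convexity of $r\mapsto 1/(1-r)$ on $[0,1)$, at least one of $r_x,r_y$ is at least $\alpha=\tfrac{3}{2}\cdot 3^{ni+j-1-n^2}$, which exceeds $3^{ni+j-1-n^2}$ by a positive margin of $3^{ni+j-1-n^2}/2$. Setting $r_y=0$ (the extreme case of a segment passing through $v_j$) yields $r_x=2\alpha/(1+\alpha)<2\alpha=3^{ni+j-n^2}$; symmetry gives the analogous bound for $r_y$, and by monotonicity the strict bound $\max(r_x,r_y)<3^{ni+j-n^2}$ holds along the entire locus (\ref{eq:harmonic}), with positive slack $2\alpha^2/(1+\alpha)$.

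The corollary then follows by a standard perturbation argument. Let
\[
F_\varepsilon\mathrel{\mathop:}=\bigl\{(r_x,r_y)\in[0,1]^2:\overline{xy}\cap T_{ij}(\varepsilon)\ne\emptyset\bigr\}.
\]
Each $F_\varepsilon$ is closed, the sets shrink with $\varepsilon$, and $\bigcap_{\varepsilon>0}F_\varepsilon$ is precisely the compact locus defined by (\ref{eq:harmonic}), on which both strict inequalities hold. A finite intersection property argument in $[0,1]^2$ then yields $\varepsilon'_{ij}>0$ and $\bar\delta_{i,j}>0$ such that on $F_{\varepsilon'_{ij}}$ the inequalities $r_x,r_y<3^{ni+j-n^2}$ and $\max(r_x,r_y)>3^{ni+j-1-n^2}+\bar\delta_{i,j}$ both hold. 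The only mild subtlety to watch is degenerate behaviour when an endpoint approaches $v_1$; however, since $p_{ij}$ is bounded away from $v_1$, such configurations are automatically excluded from $F_\varepsilon$ for $\varepsilon$ small enough, so they play no role in the argument.
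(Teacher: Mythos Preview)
Your proof is correct and essentially fills in what the paper leaves implicit: the paper treats the corollary as immediate from Observation~\ref{obs:R6trianglepoint} (the paragraph preceding the corollary already derives $r_x,r_y<2\alpha$ in the exact case from that observation, and both the second conclusion and the passage from the point $p_{ij}$ to the small triangle $T_{ij}(\varepsilon)$ are left to the reader). Your barycentric identity $\frac{1}{1-r_x}+\frac{1}{1-r_y}=\frac{2}{1-\alpha}$ is an explicit algebraic substitute for the synthetic Observation~\ref{obs:R6trianglepoint}, and the compactness argument makes the perturbation step rigorous rather than tacit; this buys you a self-contained proof at the cost of a coordinate computation the paper avoids.

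One small correction: the inequality $\max(r_x,r_y)\ge\alpha$ follows from strict \emph{monotonicity} of $r\mapsto 1/(1-r)$ (if both $r_x,r_y<\alpha$ then the sum is strictly less than $2/(1-\alpha)$), not from convexity; convexity of this map gives only $(r_x+r_y)/2\le\alpha$, which is the wrong direction. The conclusion and the stated margin $\alpha-3^{ni+j-1-n^2}=\alpha/3$ are unaffected.
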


The construction of the closed set $X_c$ proving the second part of Theorem~\ref{thm:dim5}
is similar to the construction of the closed set proving the second part of Theorem~\ref{thm:dim6}.
For every triangular face of type $1ij$, we take the triangle $T_{ij}(\varepsilon_{ij})$ 
for an appropriately chosen $\varepsilon_{ij}$.
Then we remove from $P_n$, for every triangular face of type $1ij$, 
a flat open simplex $\Delta_{ij}$ having $T_{ij}(\varepsilon_{ij})$ as a triangular face.
However, the requirements posed on the flatness of the simplices are different 
from the proof of the second part of Theorem~\ref{thm:dim6}.

We start by defining the values $\varepsilon_{ij}$
and a coloring of the points on the edges of $P_n$ of type $1i$.
The \emph{$\delta$-neighborhood} of a point $x \in \mathbb{R}^5$ is the set of points 
of $\mathbb{R}^5$ at distance less than $\delta$ from $x$.

\begin{lemma}
\label{lem:dim5SecondClosedPart1}
There are $\delta > 0$, $\varepsilon_{ij} \in (0,1)$ for every $i,j$ satisfying $2 \le i < j \le n$, 
and a coloring of the points on the edges of $P_n$ of type $1i$ using 
colors from $\{1,2,3\}$ satisfying the following.
No segment between two points in $\delta$-neighborhoods of equally colored points
intersects any of the triangles $T_{ij}(\varepsilon_{ij})$.
\end{lemma}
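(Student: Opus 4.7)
The plan is to fix the shrinking factors $\varepsilon_{ij}$, construct a $3$-coloring of the edges that satisfies the statement in the limiting case $\delta=0$, and then upgrade to $\delta > 0$ by a compactness argument.

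First, set $\varepsilon_{ij} \mathrel{\mathop:}= \min(\varepsilon'_{ij},\,1/2)$, with $\varepsilon'_{ij}$ and $\bar\delta_{ij}$ supplied by Corollary~\ref{cor:R6ratios}. The Corollary then guarantees that every line segment $\overline{xy}$ with $x \in \overline{v_1 v_i}$, $y \in \overline{v_1 v_j}$ meeting $T_{ij}(\varepsilon_{ij})$ satisfies $\relheight(x),\relheight(y) < 3^{ni+j-n^2}$, and at least one of these relative heights exceeds $3^{ni+j-1-n^2}+\bar\delta_{ij}$ --- so that endpoint lies strictly in the upper portion of the $(i,j)$-piece of its edge.

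The combinatorial heart of the proof is to define, for each ordered pair $(a,b)$ with $a\ne b$, a color $f(a,b) \in \{1,2,3\}$ which will be assigned to the piece of level $\mu(a,b) \mathrel{\mathop:}= n\min(a,b)+\max(a,b)$ on the edge $\overline{v_1 v_a}$; at any other relative height $t$ on $\overline{v_1 v_a}$, the coloring is extended piecewise constantly by choosing any color outside the forbidden set $\{f(b,a): b\ne a,\ \mu(a,b) > \log_3 t + n^2\}$. Under this prescription the ``no same-colored pair produces a segment hitting $T_{ij}(\varepsilon_{ij})$'' requirement reduces to (i) $f(i,j) \ne f(j,i)$ for every pair $\{i,j\}$; (ii) $f(j,i) \ne f(i',j)$ for every pair $\{i,j\}$ and every $i' \ne j$ with $\mu(i',j) \ge \mu(i,j)$, and symmetrically; and (iii) $|\{f(b,a): b \ne a\}| \le 2$ for every $a$ (so the ``bottom'' of every edge admits a color). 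Such an $f$ can be built greedily, processing pairs in decreasing order of $\mu(a,b)$: at each step, the set of previously-fixed values that $f(j,i)$ must avoid has size at most two --- thanks to (iii) being maintained as an invariant --- so a color in $\{1,2,3\}$ always remains available. The acyclicity of the invisibility graph on the edges proved for Theorem~\ref{thm:dim5SecondNonclosed} is precisely what bounds the propagation of constraints and keeps the forbidden set of size at most two; the third color provides the extra slack that was unnecessary in the non-closed setting but becomes essential once one asks for robustness.

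Finally, the existence of $\delta > 0$ follows by compactness. For each pair $(i,j)$ and each color $c \in \{1,2,3\}$, the set of segments $\overline{pq}$ with $p \in \overline{v_1 v_i}$ and $q \in \overline{v_1 v_j}$ both colored $c$ is a compact subset of $\mathbb{R}^5$, disjoint from the compact set $T_{ij}(\varepsilon_{ij})$ by the construction above. Their positive Hausdorff distance, minimized over the finitely many triples of pairs and colors and divided by a universal constant accounting for the simultaneous $\delta$-thickening of both endpoints, yields a $\delta > 0$ satisfying the lemma. The main obstacle is the combinatorial step: verifying that the greedy assignment of $f$ is always possible with only three colors; once that is established, the geometric compactness step is routine.
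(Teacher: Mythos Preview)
Your approach is quite different from the paper's: you colour each $(k,l)$-piece with a \emph{single} colour and fix all the $\varepsilon_{ij}$ in advance, whereas the paper uses several colours inside each relevant piece (see Figure~\ref{fig:dim5colEdges}) and shrinks $\varepsilon_{kl}$ step by step during the construction. That multi-colour subdivision of a piece is precisely what gives the paper enough slack to make three colours suffice.

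Your combinatorial step has a genuine gap. When processing the pair $\{a,c\}$ at level $M=\mu(a,c)$, the value $f(a,c)$ must avoid $S_a \mathrel{\mathop:}= \{f(b,a):\mu(a,b)>M\}$; by invariant~(iii) $|S_a|\le 2$, so \emph{some} colour is available --- this is all you claim. But you must simultaneously ensure $f(a,c)\ne f(c,a)$ \emph{and} maintain (iii), which forces $f(c,a)\in S_a$ whenever $|S_a|=2$ (and symmetrically $f(a,c)\in S_c$ whenever $|S_c|=2$). If $|S_a|=|S_c|=2$ and $S_a=S_c$, then each of $f(a,c)$ and $f(c,a)$ is forced to be the unique colour outside $S_a=S_c$, contradicting $f(a,c)\ne f(c,a)$. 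You give no argument that this configuration can be avoided, and the appeal to the acyclicity from Theorem~\ref{thm:dim5SecondNonclosed} does not help: that result concerns the invisibility graph on \emph{points} of the edges with point holes, a different object from the constraint system among the interval colours $f(a,b)$. The compactness step is also imprecise --- your colour classes are half-open intervals, hence not closed --- though that part is easily repaired by noting that the set of pairs $(p,q)$ whose segment meets $T_{ij}(\varepsilon_{ij})$ is open.
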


\begin{proof}
We first color all the points in the $[0, 3^{2n-n^2})$-interval of every edge of type $1i$ by color $1$.
We then proceed in steps $(k,l)$, $k \in \{2, 3, \dots, n\}$, $l \in \{1, 2, \dots, n\}$ 
in the lexicographic order.
In the step $(k,l)$, we color the $(k,l)$-pieces of all the edges of type $1i$
and fix $\varepsilon_{kl}$.
During the process, we guarantee that after each step, 
the points in each color class form a union of finitely many 
nondegenerate subsegments of the edges of type $1i$.

We say that a segment $\overline{pq}$ is \emph{blocked} if $\overline{pq}$ intersects 
some of the triangles $T_{ij}(\varepsilon'_{ij})$ where $2 \le i < j \le n$ and $\varepsilon'_{ij}$
is from Corollary~\ref{cor:R6ratios}.

When considering a single step $(k,l)$, 
the points colored before this step are the points in the 
$[0,3^{kn+l-1-n^2})$-intervals of the edges of type $1i$.
These are the \emph{old points}.
The \emph{new points} are the points in the $(k,l)$-pieces of the edges of type $1i$.

When $l \le k$, then in the step $(k, l)$, we color all the new points by color $1$.
Let $a$ be a new point.
Let $b$ be a new point or an old point.
By Corollary \ref{cor:R6ratios}, when we use $\delta_{k,l} = \bar{\delta}_{k,l}$, no blocked segment 
has endpoints in the $\delta_{k,l}$-neighborhoods of $a$ and $b$.

We now describe a step $(k, l)$ where $l > k$.

By Corollary~\ref{cor:R6ratios}, if a segment between points $q$ and $r$ is blocked and $q$ is new
and $r$ is either old or new, then one of $q$ and $r$ lies on $\overline{v_1 v_{k}}$
and the other on $\overline{v_1 v_{l}}$.
We color all the new points on the edges $\overline{v_1 v_{i}}$, 
where $i \in \{2,3,\dots,n\} \setminus \{k,l\}$, by color $1$.

\myfig{dim5colEdges}{1}{A coloring of the points in the $(k,l)$-piece of the edge $\overline{v_1 v_k}$. The points in the segments $A, B$ and $C$ are not allowed to be colored by color $1$, $2$ and $3$, respectively.}

We take a coloring of the $(k,l)$-piece of $\overline{v_1 v_{k}}$ with colors $\{1,2,3\}$
such that some $\delta'_{k,l} > 0$ and $\varepsilon_{k,l}''$ with 
$\varepsilon_{k,l}' > \varepsilon_{k,l}'' > 0$ satisfy the following.
For every color $c$, every segment between 
a point in the $\delta'_{k,l}$-neighborhood of the newly colored points of color $c$ 
and a point in the $\delta'_{k,l}$-neighborhood of old points of color $c$ 
is disjoint with $T_{k,l}(\varepsilon''_{k,l})$.
In addition, each color class is composed of finitely many nondegenerate segments.
See Figure~\ref{fig:dim5colEdges}.

We continue by coloring the new points on the edge $\overline{v_1 v_{l}}$.
We take a coloring of the new points on $\overline{v_1 v_{l}}$ with colors $\{1,2,3\}$
such that some $\delta_{k,l} \in (0, \delta'_{k,l}]$ and 
$\varepsilon_{k,l} \in (0, \varepsilon_{k,l}'')$ satisfy the following.
For every color $c$, every segment between a point in the $\delta_{k, l}$-neighborhood 
of a new point of color $c$ on $\overline{v_1 v_{l}}$ and a point in 
the $\delta_{k,l}$-neighborhood of an old or new point of color $c$ on 
$\overline{v_1 v_{k}}$ is disjoint with $T_{k,l}(\varepsilon_{k,l})$.
In addition, each color class is composed of finitely many nondegenerate segments.

Finally we take $\delta \mathrel{\mathop:}= \min_{k \in \{2,3,\dots, n\},l \in \{1,2,\dots, n\}}\delta_{k, l}$.
\end{proof}

\begin{theorem}
\label{thm:dim5SecondClosed}
For every $2 \le i < j \le n$, there are $\varepsilon_{ij} \in (0,1)$ and open simplices $\Delta_{ij}$ 
having $T_{ij}(\varepsilon_{ij})$ as a triangular face, disjoint with the edges of $P_n$, 
and satisfying the following.
Let $X_c \mathrel{\mathop:}= P_n \setminus \bigcup_{2 \le i < j \le n} \Delta_{ij}$.
Then we have
\[
\gamma(X_c) \ge \Omega(\log n /\log \log n) \qquad \text{and} \qquad \chi(X_c) \le 6.
\]
\end{theorem}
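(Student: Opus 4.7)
My plan is to mimic the construction from Section~\ref{subsec:dim6closed} in one lower dimension, using Lemma~\ref{lem:dim5SecondClosedPart1} as the replacement for Lemma~\ref{lem:dim5-rainbow-triangle}. First, fix $\delta>0$, the constants $\varepsilon_{ij}$, and the $3$-colouring of the points on the edges of type $1i$ given by Lemma~\ref{lem:dim5SecondClosedPart1}. For each pair $2\le i<j\le n$, let $u',v',w'$ be the vertices of $T_{ij}(\varepsilon_{ij})$, let $\Pi_{ij}$ be the $2$-plane spanned by $v_1,v_i,v_j$, and pick three auxiliary points $P_1,P_2,P_3$ placed near the barycenter of $T_{ij}(\varepsilon_{ij})$ but displaced by small linearly independent vectors in the $3$-dimensional orthogonal complement $\Pi_{ij}^\perp$. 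Then $u',v',w',P_1,P_2,P_3$ are affinely independent and span a ``flat'' closed $5$-simplex $\bar\Delta_{ij}$ having $T_{ij}(\varepsilon_{ij})$ as the $2$-face opposite to $\{P_1,P_2,P_3\}$. In analogy with the $6$-dimensional construction, I define the ``open simplex'' $\Delta_{ij}$ as $\bar\Delta_{ij}$ minus its three $4$-facets opposite to $u',v',w'$; then the interior of $T_{ij}(\varepsilon_{ij})$ lies in $\Delta_{ij}$ while its three edges and the three remaining $4$-facets remain in $X_c$. By choosing the displacements small enough I further arrange that (a) each $\bar\Delta_{ij}$ is disjoint from all edges of $P_n$, (b) distinct $\bar\Delta_{ij}$ and $\bar\Delta_{kl}$ are disjoint, and (c) for every $\{k,l\}\neq\{i,j\}$ the $2$-plane $\Pi_{ij}$ misses $\bar\Delta_{kl}$ (possible because generic $2$-planes in $\Rbb^5$ do not meet).

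The convexity lower bound then follows immediately from Lemma~\ref{lem:dim5covering}: by (a) every edge of $P_n$ lies in $X_c$, and removing the interior of each $T_{ij}(\varepsilon_{ij})$ ensures no triangle of type $1ij$ is a subset of $X_c$, so $\gamma(X_c)\ge\Omega(\log n/\log\log n)$.

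For the chromatic number I would use six colours. Colours $1,2,3$ are assigned to the points on the edges of type $1i$ exactly as in Lemma~\ref{lem:dim5SecondClosedPart1}. Colours $4,5,6$ are reserved for the interiors of the three retained $4$-facets of each $\bar\Delta_{ij}$, one colour per facet (for definiteness, assign colour $4$ to the facet opposite the homothetic image of $v_1$, colour $5$ to the one opposite the image of $v_i$, and colour $6$ to the one opposite the image of $v_j$). Every remaining point of $X_c$, namely those on edges of type $ij$ with $i,j\ge 2$, on higher-dimensional faces of $P_n$, or on the lower-dimensional boundary parts of some $\bar\Delta_{ij}$ that are not retained facets, is coloured by the ``$q_F$-extension'' device from Section~\ref{subsec:dim6closed}: for every ordinary face $F$ of $P_n$ pick an interior point $q_F$ in the kernel of $F\cap X_c$ and radially extend the already-defined colouring of $\partial(F\cap X_c)$.

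The main technical obstacle is verifying that the resulting $6$-colouring is proper. For monochromatic segments whose endpoints lie on edges of type $1i$, the situation is handled by Lemma~\ref{lem:dim5SecondClosedPart1}: any such segment lies in a single plane $\Pi_{ij}$ and, by conditions (b) and (c), meets $\bigcup_{kl}\bar\Delta_{kl}$ only possibly inside $\bar\Delta_{ij}\cap\Pi_{ij}=T_{ij}(\varepsilon_{ij})$, which same-colour segments avoid by the lemma. For monochromatic segments with endpoints on the retained $4$-facets, one argues as in Lemma~\ref{lem:dim6specpoints}: each $\Delta_{ij}$ is contained in the intersection of the three open halfspaces bounded by the hyperplanes of the removed facets, and same-colour pairs of facets sit in too few common minor halfspaces to let their connecting segment cross a removed region. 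Finally, extensions via $q_F$-points are handled, as in Section~\ref{subsec:dim6closed}, by the analogue of Observation~\ref{obs:extension}, which reduces the check to the two already-verified cases. The bulk of the work lies in choosing the displacement sizes defining the $P_k$'s uniformly small enough that conditions (a)--(c), the guarantee of Lemma~\ref{lem:dim5SecondClosedPart1}, and the halfspace bookkeeping all hold simultaneously.
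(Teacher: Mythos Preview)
Your overall strategy matches the paper's, but there is a real gap in the colouring. You never colour the interiors of the triangles $S$ of type $1ij$ (that is, $S\setminus\mathrm{int}\,T_{ij}(\varepsilon_{ij})$). These triangles are not ordinary faces under the definition you borrow from Section~\ref{subsec:dim6closed}, so the $q_F$-extension does not apply to them; nor can you simply declare them ordinary, because $S\cap X_c=S\setminus\mathrm{int}\,T_{ij}(\varepsilon_{ij})$ has empty kernel and is not star-shaped. The paper colours these triangles directly: a $\delta$-neighbourhood (inside $S$) of each of the two edges of type $1i$ inherits the colour of the nearest edge-point, and the rest of $S\cap X_c$ is split into three trapezoids of colours $4,5,6$. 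This is precisely where the $\delta$ from Lemma~\ref{lem:dim5SecondClosedPart1} is used; in your argument $\delta$ never appears. The edges of type $ij$ with $i,j\ge 2$ are likewise never assigned a colour in your scheme; the paper gives them colour~$4$.

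This omission propagates to your verification. After the $q_F$-reduction via Observation~\ref{obs:extension}, the special endpoints of colour $c\in\{1,2,3\}$ need not lie on edges of type $1i$: they may lie anywhere in the $\delta$-strip of such an edge inside some $1ij$-triangle, and the segment between two such points is in general \emph{not} contained in any single $2$-plane $\Pi_{ij}$, so your condition~(c) does not help. The paper handles this by imposing an orthogonal-projection condition (its~(C4)) on $\Delta_S$ and then invoking the $\delta$-clause of Lemma~\ref{lem:dim5SecondClosedPart1}. Similarly, for colours $4,5,6$ your conditions (a)--(c) do not show that a segment between retained facets of two different $\bar\Delta_{ij}$ avoids a third $\Delta_{kl}$; the paper needs its condition~(C3), which bounds how far the minor halfspaces of one $\Delta_S$ reach into other $1ij$-triangles, to make the halfspace bookkeeping of Observation~\ref{obs:halfspaces} go through. (A minor aside: your justification for~(c) is off, since the planes $\Pi_{ij}$ all pass through $v_1$ and hence are never generic; the conclusion is still attainable, but by a compactness argument rather than genericity.)
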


\begin{proof}
The bound on $\gamma(X_c)$ follows from Lemma~\ref{lem:dim5covering}.

We fix the coloring of the edges of type $1i$ and the values $\varepsilon_{ij}$ 
obtained in Lemma~\ref{lem:dim5SecondClosedPart1}. 

All points on the edges of type $ij$ are colored by color $4$.
All other points in triangular faces of type $1ij$ at distance at most $\delta$ from an edge of type $1i$ 
are colored by the color of a nearest point on an edge of type $1i$.
By the coloring and the choice of $\delta_{k, l}$, 
every segment with endpoints in two colored points of the same color from $\{1,2,3\}$ 
is disjoint with all the triangles $T_{k,l}(\varepsilon_{k,l})$.

The triangles $T_{ij}(\varepsilon_{ij})$ are fixed as well, 
and so are the intersections $S \cap X_c$ for every triangular face $S$ of type $1ij$.
We continue by coloring all the uncolored points in these intersections.

Refer to Figure~\ref{fig:dim5colTriangle}.
Let $S$ be a triangular face of type $1ij$ with vertices $v_1, v_k$ and $v_l$
%and let $u \mathrel{\mathop:}= v_1$, $v \mathrel{\mathop:}= v_k$ and $w \mathrel{\mathop:}= v_{l}$ be its vertices.
and let $T_S \mathrel{\mathop:}= T_{k,l}(\varepsilon_{k,l})$.
To color the remaining points of $S \setminus T_S=S \cap X_c$,
we split the face by three segments connecting the vertices of $S$ 
with the corresponding vertices of $T_S$ into three trapezoids.
All the uncolored points in the trapezoid incident with $v_k$ and $v_l$ get color $4$, 
all the uncolored points in one of the other two trapezoids get color $5$,
and all the uncolored points in the third trapezoid get color $6$.

\myfig{dim5colTriangle}{1}{A coloring of the triangular faces of type $1ij$.}

A face $F$ of $P_n$, including the case $F=P_n$, is \emph{ordinary} 
if its dimension is at least $2$ and it is not a triangular face of type $1ij$.
For every ordinary face $F$, we fix a point $q_F$ in its interior.

Let $S$ be a triangular face of $P_n$ of type $1ij$ and let $u$, $v$ and $w$ be its vertices.
For every $x \in \{u,v,w\}$,
we let $e_x$ be the edge of $S$ that does not contain the vertex $x$ 
and $e'_x$ the edge of $T_S$ corresponding to $e_x$.
We take three hyperplanes $h_u$, $h_v$ and $h_w$ 
satisfying the following four conditions for every $x \in \{u,v,w\}$.

\begin{enumerate}
\item[(C1)]
The hyperplane $h_{x}$ contains the edge $e'_x$ of $T_S$.
\item[(C2)]
One open halfspace determined by $h_{x}$ contains $x$ and the interior of $T_S$, 
this halfspace is called the \emph{minor halfspace} $H_x^-$. 
The other open halfspace is the \emph{major halfspace} $H_x^+$ and contains 
the point $q_F$ for every ordinary face $F$,
and all the vertices of $P_n$ except for $x$.
\item[(C3)]
For every point $r$ of a triangular face $S' \neq S$ of type $1ij$,
if $r \in H_x^-$, then $r$ is at distance at most $\delta$ from one of the two edges of $S$ of type $1i$.
\item[(C4)]
The orthogonal projection of $P_n \cap H_u^- \cap H_v^- \cap H_w^-$ on the $2$-dimensional affine 
space containing $S$ is a subset of $T_S$.
\end{enumerate}

The construction continues in the same way 
as the construction of the closed set in dimension $6$ in Section~\ref{subsec:dim6closed}.
Let $\mathcal{S}$ be the set of all triangular faces of type $1ij$.
Given $S \in \mathcal{S}$ with vertices $u$, $v$ and $w$, let $C_S \mathrel{\mathop:}= H_u^- \cap H_v^- \cap H_w^-$
and let $X_c \mathrel{\mathop:}= P_n \setminus \bigcup_{S \in \mathcal{S}}{C_S}$.
For every $S \in \mathcal{S}$, let $\Delta_S \mathrel{\mathop:}= P_n \cap C_S$.
For every $x \in \{u,v,w\}$, the hyperplane $h_{x}$ is assigned the color of the points 
on the edge $e'_x$.
Every point on the common boundary of $\Delta_S$ and $X_c$ lies on one, two or three of the hyperplanes $h_{u}$, $h_{v}$ and $h_{w}$, and gets the color of one of these hyperplanes on which it lies.

All the vertices and edges of $P_n$ are contained in some triangular face of type $1ij$
and so it remains to color the points in the interiors of the ordinary faces.
We take all the ordinary faces in the order of nondecreasing dimension.
For every ordinary face $F$, we color the interior of $F \cap X_c$ by the $q_F$-extension 
of the coloring of the boundary of $F \cap X_c$.

A \emph{special point} is a point of $X_c$ that is not in the interior of $F \cap X_c$ 
for any ordinary face $F$ of $P_n$.
First, we show that for every pair $s, t$ of special points of the same color $c \in \{1,2,3\}$,
the segment $\overline{st}$ is contained in the set $X_c$.
Since the color of $s$ and $t$ is from $\{1,2,3\}$, each of them is at distance at most $\delta$ 
from some edge of type $1i$.
Let $S$ be a triangular face of type $1ij$.

If $s$ and $t$ are at distance at most $\delta$ from the edges of $S$ of type $1i$, 
then we let $s'$ and $t'$ be their orthogonal projections on the $2$-dimensional affine space containing $S$.
Then $s'$ and $t'$ are at distance at most $\delta$ from the edges of $S$ of type $1i$,
and so the segment $\overline{s't'}$ is disjoint from $T_S$ by Lemma~\ref{lem:dim5SecondClosedPart1}.
By~(C4), $\overline{st}$ is disjoint with $\Delta_S$.

Otherwise, assume that $s$ is at distance larger than $\delta$ from the two $1i$-edges of $S$.
Then $s$ is in all the three major halfspaces associated with $S$ and so, 
by Observation~\ref{obs:halfspaces}, $\overline{st}$ is disjoint with $\Delta_S$.

Let $s$ and $t$ be two special points of color $c \in \{4,5,6\}$.
Let $S$ be a triangular face of type $1ij$.
If both $s$ and $t$ are in $S \cup \Delta_S$, then both $s$ and $t$ are in $h_x \cup H_x^+$
where $x$ is the vertex such that $e'_x$ has color $c$.
Thus $\overline{st}$ is disjoint with $\Delta_S$.
Otherwise, assume that $s$ does not lie in $S \cup \Delta_S$.
By (C3), $s$ does not lie in any minor halfspace associated with $S$ and so
$\overline{st}$ is disjoint with $\Delta_S$.

Thus, if $s$ and $t$ are special points of the same color, then $\overline{st} \subseteq X_c$.
We apply Observation~\ref{obs:extension} in the same way as in the proof 
of the second part of Theorem~\ref{thm:dim6} in Section~\ref{sec:dim6}
to conclude that if some two points $s, t \in X_c$ have the same color, 
then $\overline{st} \subseteq X_c$.
\end{proof}

%==================================================================================================

\section{Concluding remarks}

To solve Problem~\ref{prob:sep34} in dimension $4$, we could use a construction similar to those
in dimensions $5$ and $6$ provided the following problem has a positive answer.

\begin{problem}
\label{prob:r4poly}
Does there exist for every $k$ a convex simplicial polytope $P(k)$ in $\Rbb^4$ such that
in every coloring of vertices of $P(k)$ by $k$ colors we can find a triangular face whose vertices are monochromatic?
%Does there exist for every $k$ a convex simplicial polytope $P(k)$ in $\rbb^4$ satisfying the following conditions?
%$(1)$ In every coloring of vertices of $P(k)$ by $k$ colors we can find a triangular face whose vertices are monochromatic.
%$(2)$ There exists a coloring of the edges of $P(k)$ by $c$ colors such that for every triangular face the three edges
%on its boundary use three different colors, where $c$ is a constant independent of $k$.
\end{problem}

Assuming the polytope $P(k)$ from Problem~\ref{prob:r4poly} exists, the set $X$ from Problem~\ref{prob:sep34} is obtained from $P(k)$ by making a one-point hole in an arbitrary point inside every
triangular face. Such a set $X$ cannot be covered by $k$ convex sets since otherwise one of the convex sets would contain three vertices of a triangular face.

The invisibility graph $I(X)$ can be colored by $13$ colors in the following way.
All the vertices of $P(k)$ get color $1$.
Tancer~\cite{Tancer13_duhove_simplexy} has shown that the edges of every $2$-dimensional simplicial complex PL-embeddable in $\mathbb{R}^3$ can be colored by $12$ colors so that for every triangular face the three edges on its boundary have three different colors. This applies, in particular, to the $2$-skeleton of every $4$-dimensional convex simplicial polytope. Thus we may use colors $2,3, \dots, 13$ to color the interiors of edges of $P(k)$ so that every edge gets only one color and edges in the same triangular face get distinct colors.
%Points in the interior of each edge are colored by the color of the edge.
For each triangular face, the interior of the segment connecting the one-point hole with a point $p$ on the boundary is colored by the color of $p$.
%to neni pravda!!! All the remaining points of $X$ have degree at most $1$ in $I(X)$.
%ale toto uz ano:
All the remaining points of $X$ are isolated in $I(X)$ and thus may be colored arbitrarily.

The boundary complex of a $4$-dimensional convex simplicial polytope is a special case of a triangulation of $S^3$. If we relax the condition on polytopality in Problem~\ref{prob:r4poly} and ask only for a triangulation of $S^3$, then the answer is yes. Heise et al.~\cite{HPPT12_coloring_hypergraphs} constructed, for every $k$, a $2$-dimensional simplicial complex linearly embedded in $\mathbb{R}^3$ such that in every coloring of its vertices with $k$ colors at least one of the triangles is monochromatic.
We found the same simplicial complex independently, modifying Boris Bukh's construction, which was communicated to us by Martin Tancer. The vertices of the complex are placed on the moment curve and a suitable noncrossing subset of triangles is chosen for the faces. It remains to extend the embedded complex to a triangulation of the whole $\mathbb{R}^3$, or $S^3$; see~\cite[Lemma 6]{B59_3manifolds} or~\cite[Lemma or Theorem 5]{WN35_complexes}. 
We thank Karim Adiprasito for bringing these two references to our attention.

Gonska and Padrol~\cite{GoPa13_neighborly} constructed $n^{n+o(n)}$ combinatorially distinct simplicial neighborly $4$-polytopes obtained by lifting $3$-dimensional Delaunay triangulations. Each of the triangulations is obtained from a convex $n$-gon in the $xy$-plane by lifting its vertices in the direction of the $z$-axis, in a specified order, so that each vertex $v_i$ ``sees'' the upper envelope of the convex hull of $v_1, v_2, \dots, v_{i-1}$. It is easy to see that the resulting triangulation and the corresponding $4$-polytope can be colored with $3$ colors, since each triangular face contains an edge of the outerplanar graph that can be represented as the union of the boundaries of the projections of $\conv\{v_1, v_2, \dots, v_{i}\}$, $i=3,4,\dots,n$, to the $xy$-plane. We have no example of a simplicial $4$-polytope where $3$ colors are not sufficient.

\bibliographystyle{plain}

\end{document}